\newcommand{\ie}{{\it i.e.}}
\newcommand{\etc}{{\it  etc.}}
\newcommand{\dd}{{\rm d}}
\newcommand{\T}{{\rm T}}
\newcommand{\ignore}[1]{}
\def\R#1{$(\ref{#1})$} 	
\def\Frac#1#2{{\textstyle \frac{#1}{#2}}} 
\newtheorem{theorem}{Theorem}[section]
\newtheorem{lemma}[theorem]{Lemma}
\newtheorem{remark}[theorem]{Remark}
\newtheorem{definition}[theorem]{Definition}
\newtheorem{example}[theorem]{Example}
\newtheorem{problem}[theorem]{Problem}
\begin{document}

\title{Data Assimilation: The Schr\"odinger Perspective}

\author{Sebastian Reich\thanks{%
 Department of Mathematics, University of Potsdam \& University of Reading,
 {\tt sebastian.reich@uni-potsdam.de}
}}

\maketitle

\begin{abstract}
Data assimilation addresses the general problem of how to combine model-based predictions with partial and noisy observations 
of the process in an optimal manner. This survey focuses on sequential data assimilation techniques using probabilistic 
particle-based algorithms.  In addition to surveying recent developments for discrete- and continuous-time data assimilation, both
in terms of mathematical foundations and algorithmic implementations, we also provide a unifying framework from the perspective 
of coupling of measures, and Schr\"odinger's boundary value problem 
for stochastic processes in particular. 
\end{abstract}

%
\section{Introduction}
%

This survey focuses on sequential data assimilation techniques for state and parameter estimation
in the context of discrete- and continuous-time stochastic diffusion processes. See Figure \ref{figure:DA1}. 
The field itself is well established  \cite{sr:evensen,sr:sarkka,sr:stuart15,sr:reichcotter15,sr:ABN16}, but is also undergoing 
continuous development due to new challenges arising from emerging application areas such as medicine, traffic control, 
biology, cognitive sciences 
and geosciences.

\begin{figure}
\begin{center}
\includegraphics[width=0.8\textwidth,trim = 0 100 0 20,clip]{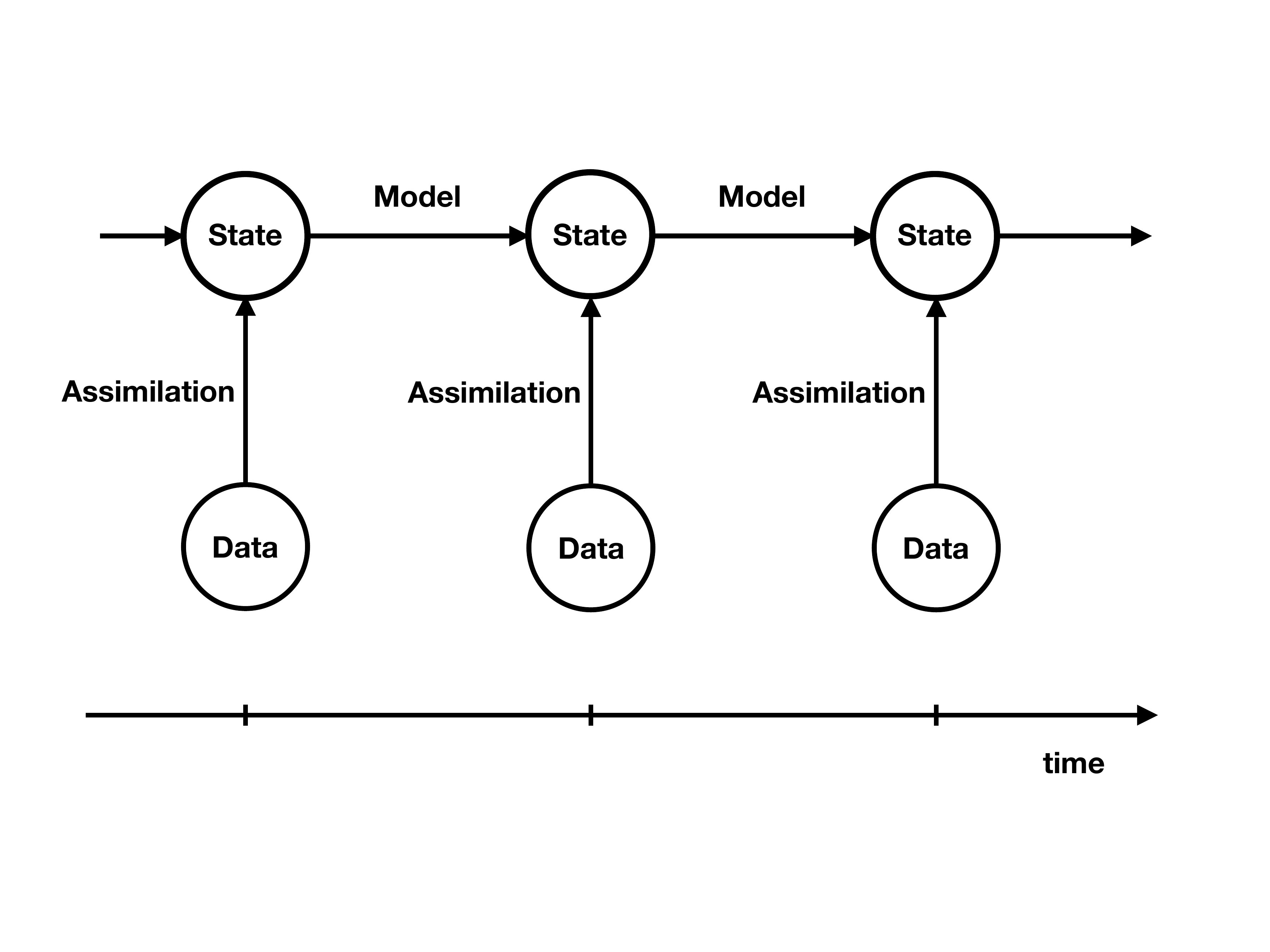}
\end{center}
\caption{Schematic illustration of sequential data assimilation, where model states are propagated forward in time under a given model
dynamics and adjusted whenever data become available at discrete instances in time. In this paper, we look at a single transition from
a given model state conditioned on all the previous and current data to the next instance in time, and its adjustment under the 
assimilation of the new data then becoming available.}
\label{figure:DA1}
\end{figure}

Data assimilation is typically formulated within a Bayesian framework in order to combine partial and noisy observations with 
model predictions and their uncertainties  with the goal of adjusting model states and model parameters in an optimal
manner.  In the case of linear systems and Gaussian distributions, this task leads to the celebrated Kalman filter 
\cite{sr:sarkka} which even today forms the basis of a number of popular data assimilation schemes and which has given 
rise to the widely used ensemble Kalman filter \cite{sr:evensen}. Contrary to standard sequential Monte Carlo methods 
\cite{sr:Doucet,sr:crisan}, the ensemble Kalman filter does not provide a consistent approximation to the sequential filtering 
problem, while being applicable to very high-dimensional problems. This and other advances have widened the scope of
sequential data assimilation and have led to an avalanche of new methods in recent years.

In this review we will focus on probabilistic methods (in contrast to data assimilation techniques based on optimisation, 
such as 3DVar and  4DVar \cite{sr:evensen,sr:stuart15}) in the form of sequential particle methods. The essential challenge of sequential
particle methods is to convert a sample of $M$ particles from a filtering distribution at time $t_k$ into
$M$ samples from the filtering distribution at time $t_{k+1}$ without having access to
the full filtering distributions. It will also often be the case in practical applications that the sample
size will be small to moderate in comparison to the number of variables we need to estimate. 

Sequential particle methods can be viewed as a special instance of interacting particle systems \cite{sr:DelMoral}.
We will view such interacting particle systems in this review from the perspective of approximating a certain boundary value 
problem in the space of probability measures, where the boundary conditions are provided by the underlying stochastic process, 
the data, and Bayes' theorem. This point of view leads naturally to optimal transportation \cite{sr:Villani,sr:reichcotter15} 
and, more importantly for this review, to Schr\"odinger's problem \cite{sr:FG97,sr:L14,sr:CGP14}, 
as formulated first by Erwin Schr\"odinger in the form of a boundary value problem for Brownian motion  \cite{sr:S31}.

This paper has been written with the intention of presenting a unifying framework for sequential data assimilation using
coupling of measure arguments provided through optimal transportation and Schr\"odinger's problem. We will also summarise novel
algorithmic developments that were inspired by this perspective. Both discrete- and continuous-time processes and data sets will
be covered. While the primary focus is on state estimation, the presented material can be extended to combined state and parameter
estimation. See Remark \ref{rem:state_estimation} below.

\begin{remark}  We will primary refer to the methods considered in the survey as 
particle or ensemble methods instead of the widely used notion of sequential Monte Carlo
methods. We will also use the notions of particles, samples and ensemble members
synonymously. Since the ensemble size, $M$, is generally assumed to be small to moderate relative
to the number of variables of interest, we will focus on robust but generally biased particle 
methods.
\end{remark}

\medskip

\subsection{Overall organisation of the paper}
This survey consists of four main parts. We start Section \ref{sec:mf_dtDA} by recalling key mathematical concepts of sequential data assimilation 
when the data become available at discrete instances in time. Here the underlying dynamic models can be either 
continuous (that is, is generated by a stochastic differential equation) or discrete-in-time. Our initial review of the problem 
will lead to the identification of three different scenarios of performing sequential data assimilation, which we denote by (A), (B) and (C). 
While the first two scenarios are linked to the classical importance resampling and optimal proposal densities for particle filtering \cite{sr:Doucet}, 
scenario (C) builds upon an intrinsic connection to a certain boundary value problem in the space of joint probability measures first considered 
by Erwin Schr\"odinger \cite{sr:S31}. 

After this initial review, the remaining parts of Section \ref{sec:mf_dtDA} provide more mathematical details on prediction in Section \ref{sec:dp}, filtering and smoothing in Section \ref{sec:filtering_and_smoothing}, and the Schr\"odinger approach to sequential data assimilation in Section \ref{sec:SP}. 
The modification of a given Markov transition kernel via a twisting function will arise as a crucial mathematical construction and will be introduced 
in Sections \ref{sec:SEN} and \ref{sec:dp}. 
The next major part of the paper, Section \ref{sec:numerics}, is devoted to numerical implementations of prediction, filtering and
smoothing, and the Schr\"odinger approach as relevant to scenarios (A)--(C) introduced earlier in Section
\ref{sec:mf_dtDA}. More specifically, this part will cover the ensemble Kalman filter and 
its extensions to the more general class of linear ensemble transform filters as well as the numerical implementation of the Schr\"odinger approach to
sequential data assimilation using the Sinkhorn algorithm \cite{sr:S67,sr:PC18}. Discrete-time stochastic systems with additive Gaussian 
model errors and stochastic differential equations with constant diffusion coefficient serve as illustrating examples throughout both Sections 
\ref{sec:mf_dtDA} and \ref{sec:numerics}.

Sections \ref{sec:mf_dtDA} and \ref{sec:numerics} are followed by two sections on the assimilation of data that arrive continuously in time.
In Section \ref{sec:mf_ct_DA} we will distinguish between data that are smooth 
as a function of time and data which have been perturbed by Brownian motion. In both cases, we will demonstrate that
the data assimilation problem can be reformulated in terms of so-called mean-field equations, 
which produce the correct conditional marginal distributions in the state variables.  In particular, in Section \ref{sec:random_data} we discuss the
feedback particle filter of \citeasnoun{sr:meyn13}  in some detail.
The final section of this review, Section \ref{sec:NM4}, covers numerical approximations to these mean-field equations in the form of 
interacting particle systems. More specifically, the continuous-time ensemble Kalman--Bucy
and numerical implementations of the feedback particle filter will be covered in detail. It will be shown in particular that the 
numerical implementation of the feedback particle filter can be achieved naturally via the approximation of an associated 
Schr\"odinger problem using the Sinkhorn algorithm.

In the appendices we provide additional background material on mesh-free approximations of the Fokker--Planck and backward Kolmogorov 
equations (Appendix A), on the regularised St\"ormer--Verlet time-stepping methods for the hybrid Monte Carlo method, 
applicable to Bayesian inference problems over path spaces (Appendix B), on the ensemble Kalman filter (Appendix C), 
and on the numerical approximation of forward--backward stochastic differential equations (SDEs) (Appendix D).


\subsection{Summary of essential notations} \label{sec:SEN}

We typically denote the probability density function (PDF) of a random variable $Z$ by $\pi$. Realisations of
$Z$ will be denoted by $z = Z(\omega)$. 

Realisations of a random variable can also be continuous functions/paths,
in which case the associated probability measure on path space is denoted by $\mathbb{Q}$. We will primarily consider
continuous functions over the unit time interval and denote the associated random variable by $Z_{[0,1]}$ and its realisations $Z_{[0,1]}(\omega)$ 
by $z_{[0,t]}$. The restriction of $Z_{[0,1]}$ to a particular instance $t \in [0,1]$ is denoted by $Z_t$ with marginal distribution $\pi_t$
and realisations $z_t = Z_t(\omega)$.

For a random variable $Z$ having only finitely many
outcomes $z^i$, $i=1,\ldots,M$, with probabilities $p_i$, that is,
\begin{equation*}
\mathbb{P}[Z(\omega)=z^i] = p_i,
\end{equation*}
we will work with either the probability vector $p = (p_1,\ldots,p_M)^\T$
or the empirical measure
\begin{equation*}
\pi(z) = \sum_{i=1}^M p_i \,\delta(z-z^i),
\end{equation*}
where $\delta (\cdot)$ denotes the standard Dirac delta function. 

We use the shorthand 
\begin{equation*}
\pi [f] = \int f(z)\,\pi(z)\,\dd z
\end{equation*}
for the expectation of a function $f$ under a PDF $\pi$. Similarly, integration with respect to a probability measure
$\mathbb{Q}$, not necessarily absolutely continuous with respect to Lebesgue, will be denoted by
\begin{equation*}
\mathbb{Q}[f] = \int f(z) \,\mathbb{Q}(\dd z).
\end{equation*}
The notation $\mathbb{E}[f]$ is used if we do not wish to specify the measure explicitly. 

The PDF of a Gaussian random variable, $Z$, with mean $\bar z$ and covariance matrix $B$ will be abbreviated by ${\rm n}(z;\bar z,B)$.
We also write $Z \sim {\rm N}(\bar z,B)$.

Let $u \in \mathbb{R}^N$, then $D(u) \in \mathbb{R}^{N\times N}$ denotes the diagonal matrix
with entries $(D(u))_{ii} = u_i$, $i=1,\ldots,N$. We also denote the $N\times 1$ vector of ones by
$\mathbb{1}_N = (1,\ldots,1)^\T \in \mathbb{R}^N$.

A matrix $P \in \mathbb{R}^{L\times M}$ is called bi-stochastic if all its entries are non-negativ, which we will abbreviate by $P\ge 0$, and 
\begin{equation*}
\sum_{l=1}^L q_{li} = p_0, \qquad \sum_{i=1}^M q_{li} = p_1,
\end{equation*}
where both $p_1 \in \mathbb{R}^L$ and $p_0 \in \mathbb{R}^M$ 
are probability vectors. A matrix $Q \in \mathbb{R}^{M\times M}$ defines a discrete Markov chain if all its entries are 
non-negative and
\begin{equation*}
\sum_{l=1}^L q_{li} = 1.
\end{equation*}
The Kullback--Leibler divergence between two bi-stochastic matrices $P \in \mathbb{R}^{L\times M}$ and $Q \in \mathbb{R}^{L\times M}$ 
is defined by
\begin{equation*}
{\rm KL}\,(P||Q) := \sum_{l,j} p_{lj} \log\frac{p_{lj}}{q_{lj}}.
\end{equation*}
Here we have assumed for simplicity that $q_{lj}>0$ for all entries of $Q$. This definition extends to the Kullback--Leibler divergence between 
two discrete Markov chains.

The transition probability going from state $z_0$ at time $t=0$ to state $z_1$ at time $t=1$ 
is denoted by $q_+(z_1|z_0)$. Hence, given an initial PDF $\pi_0(z_0)$ at $t=0$, 
the resulting (prediction or forecast) PDF at time $t=1$ is provided by
\begin{equation} \label{eq:pi1}
\pi_1(z_1) := \int q_+(z_1|z_0) \,\pi_0(z_0)\,\dd z_0.
\end{equation}
Given a twisting function $\psi(z)>0$, the twisted transition kernel 
$q_+^\psi(z_1|z_0)$ is defined by
\begin{equation} \label{eq:q-twisted}
q^\psi_+(z_1|z_0) := \psi(z_1)\,q_+(z_1|z_0)\,\widehat{\psi}(z_0)^{-1}
\end{equation}
provided
\begin{equation} \label{eq:psi_0}
\widehat{\psi}(z_0):= \int q_+(z_1|z_0)\,\psi(z_1)\,\dd z_1
\end{equation}
is non--zero for all $z_0$. See Definition \ref{def:twisted_kernel} for more details. 

If transitions are characterised by a discrete Markov chain $Q_+ \in \mathbb{R}^M$, then a twisted Markov chain
is provided by
\begin{equation*}
Q_+^u = D(u)\, Q_+ \,D(v)^{-1}
\end{equation*}
for given twisting vector $u\in \mathbb{R}^M$ with positive entries $u_i$, that is, $u>0$, and the vector 
$v \in \mathbb{R}^M$ determined by
\begin{equation*}
v = (D(u)\,Q_+)^\T \,\mathbb{1}_M.
\end{equation*}

The conditional probability of observing $y$ given $z$ is denoted by $\pi(y|z)$ and the likelihood of $z$ given an observed $y$ is abbreviated by
$l(z) = \pi(y|z)$. We will also use the abbreviations
\begin{equation*}
\widehat{\pi}_1(z_1) = \pi_1(z_1|y_1)
\end{equation*}
and 
\begin{equation*}
\widehat{\pi}_0(z_0) = \pi_0(z_0|y_1)
\end{equation*}
to denote the conditional PDFs of a process at time $t=1$ given data at time $t=1$ (filtering) and the conditional PDF at time $t=0$ given data
at time $t=1$ (smoothing), respectively. Finally, we also introduce the evidence 
\begin{equation*}
\beta := \pi_1[l] = \int p(y_1|z_1)\pi_1(z_1)\dd z_1
\end{equation*} 
of observing $y_1$ under the given model as represented by the forecast PDF \R{eq:pi1}. A more precise definition of these expressions 
will be given in the following section.

%
\section{Mathematical foundation of discrete-time DA} \label{sec:mf_dtDA}
%

Let us assume that we are given partial and noisy observations $y_{k}$, $k=1,\ldots,K,$ 
of a  stochastic process in regular time intervals of length $T=1$. Given a likelihood
function $\pi(y|z)$, a Markov transition kernel $q_+(z'|z)$ and an initial distribution
$\pi_0$, the associated prior and posterior PDFs are given by
\begin{equation} \label{eq:Prediction}
\pi(z_{0:K}) := \pi_0(z_0) \prod_{k=1}^K q_+(z_{k}|z_{k-1})
\end{equation}
and
\begin{equation} \label{eq:Smoothing}
\pi(z_{0:K}|y_{1:K}) := \frac{\pi_0(z_0) \prod_{k=1}^K \pi(y_k|z_k)\,q_+(z_k|z_{k-1})}{\pi(y_{1:K})},
\end{equation}
respectively \cite{sr:jazwinski,sr:sarkka}. While it is of broad interest to approximate the posterior or smoothing
PDF \R{eq:Smoothing}, we will focus  on the recursive approximation
of the filtering PDFs $\pi(z_k | y_{1:k})$ using sequential 
particle filters in this paper. More specifically, we wish to address the following
computational task.

\medskip

\begin{problem} \label{ps:problem1}
We have $M$ equally weighted Monte Carlo samples $z_{k-1}^i$, $i=1,\ldots,M$, from the filtering 
PDF $\pi(z_{k-1}|y_{1:k-1})$ at time $t=k-1$ available and we wish to produce $M$ equally 
weighted samples from the filtering PDF  $\pi(z_k|y_{1:k})$ at time $t=k$ having access to the transition 
kernel $q_+(z_k|z_{k-1})$ and the likelihood $\pi(y_k|z_k)$ only. Since the computational task is 
exactly the same for all indices $k\ge 1$, we simply set $k=1$ throughout this paper.
\end{problem}

\medskip

\noindent
We introduce some notations before we discuss several possibilities of addressing Problem \ref{ps:problem1}. 
Since we do not have direct access to the filtering distribution at time $k=0$, the PDF at $t_0$ becomes
\begin{equation} \label{eq:initial_pdf}
\pi_0(z_0) := \frac{1}{M} \sum_{i=1}^M \delta(z_0 - z_0^i),
\end{equation}
where $\delta(z)$ denotes the Dirac delta function and $z_0^i$, $i=1,\ldots,M$, are $M$ given Monte Carlo samples representing the
actual filtering distribution. Recall that we abbreviate the resulting filtering PDF $\pi(z_1|y_1)$ at $t=1$ by 
$\widehat{\pi}_1(z_1)$ and the likelihood $\pi(y_1|z_1)$ by $l(z_1)$. Because of \R{eq:pi1}, the forecast PDF is given by
\begin{equation} \label{eq:prediction}
\pi_1(z_1)  = \frac{1}{M}\sum_{i=1}^M q_+(z_1|z_0^i)
\end{equation}
and the filtering PDF at time $t=1$ by
\begin{equation} \label{eq:filter_PDF}
\widehat{\pi}_1(z_1) := \frac{l(z_1)\,\pi_1(z_1)}{\pi_1[l]} = \frac{1}{\pi_1[l]} 
\frac{1}{M} \sum_{i=1}^M l(z_1)\,q_+(z_1|z_0^i)
\end{equation}
according to Bayes' theorem.

\medskip

\begin{remark} \label{rem:state_estimation}
The normalisation constant $\pi(y_{1:K})$ in \R{eq:Smoothing}, also called the evidence, can be determined recursively using
\begin{align} \nonumber
\pi(y_{1:k}) &= \pi(y_{1:k-1})\,\int \pi(y_k,z_{k-1})\,\pi(z_{k-1}|y_{1:{k-1}}) \dd z_{k-1}\\ \nonumber
&= \pi(y_{1:k-1})\,\int \int \pi(y_k|z_{k})\,q_+(z_k|z_{k-1})\,\pi(z_{k-1}|y_{1:{k-1}}) \dd z_{k-1} \dd z_k\\
&= \pi(y_{1:k-1})\,\int  \pi(y_k|z_k)\,\pi(z_{k}|y_{1:{k-1}}) \dd z_k \label{eq:recursive_evidence}
\end{align}
\cite{sr:sarkka,sr:reichcotter15}. Since, as for the state estimation problem, the computational task is the same
for each index $k\ge 1$, we simply set $k=1$ and formally use $\pi(y_{1:0}) \equiv 1$.  We are then left with
\begin{equation} \label{eq:evidence}
\beta := \pi_1[l] = \frac{1}{M} \sum_{i=1}^M \int l(z_1)\,q_+(z_1|z_0^i) \,\dd z_1
\end{equation}
within the setting of Problem \ref{ps:problem1} and  $\beta$ is a shorthand for $\pi(y_1)$.
If the model depends on parameters, $\lambda$, or different models are to be compared, then it is important to evaluate 
the evidence \R{eq:evidence} for each parameter value $\lambda$ or model, respectively. More specifically, if $q_+(z_1|z_0;\lambda)$, 
then $\beta = \beta (\lambda)$ in \R{eq:evidence} and larger values of $\beta(\lambda)$ indicate a better fit of the transition kernel 
to the data for that parameter value. One can then perform Bayesian parameter inference based upon  appropriate 
approximations to the likelihood $\pi(y_1|\lambda) = \beta (\lambda)$ and a given prior PDF $\pi(\lambda)$. The extension to the complete 
data set $y_{1:K}$, $K>1$, is straightforward using \R{eq:recursive_evidence} and an appropriate data assimilation algorithm, that is,
algorithms that can tackle problem \ref{ps:problem1} sequentially.

Alternatively,  one can treat a combined state--parameter estimation problem as a particular case of problem \ref{ps:problem1} by introducing the
extended state variable $(z,\lambda)$ and augmented transition probabilities $Z_1 \sim q_+(\cdot|z_0,\lambda_0)$ and 
$\mathbb{P}[\Lambda_1 = \lambda_0] = 1$. The state augmentation technique allows one to extend all approaches discussed in 
this paper for Problem \ref{ps:problem1} to combined state--parameter estimation.

See \citeasnoun{sr:KDSMC15} for a detailed survey of the topic of combined state and parameter estimation.
\end{remark}

\medskip

\noindent
The filtering distribution $\widehat{\pi}_1$ at time $t=1$ implies a smoothing distribution at time $t=0$, which is given by
\begin{equation} \label{eq:smoothing_PDF0}
\widehat{\pi}_0(z_0) := \frac{1}{\beta} \int l(z_1) \,q_+(z_1|z_0)\,\pi_0(z_0)\,\dd z_1 = \frac{1}{M} \sum_{i=1}^M \gamma^i 
\delta(z_0-z_0^i)
\end{equation}
with weights 
\begin{equation} \label{eq:gamma_i}
\gamma^i := \frac{1}{\beta}  \int l(z_1) \,q_+(z_1|z_0^i)\,\dd z_1.
\end{equation}
It is important to note that the filtering PDF $\widehat{\pi}_1$ can be obtained from $\widehat{\pi}_0$ using the
transition kernels
\begin{equation} \label{eq:smoothing_kernel}
\widehat{q}_+(z_1|z_0^i) := \frac{l(z_1)\,q_+(z_1|z_0^i)}{\beta\,\gamma^i},
\end{equation}
that is,
\begin{equation*}
\widehat{\pi}_1(z_1) = \frac{1}{M} \sum_{i=1}^M \widehat{q}_+(z_1|z_0^i)\,\gamma^i .
\end{equation*}
See Figure \ref{fig:overview2} for a schematic illustration of these distributions and their mutual relationships. 

\medskip
\begin{remark} The modified transition kernel \R{eq:smoothing_kernel} can be seen as a particular instance
of a twisted transition kernel \R{eq:q-twisted} with $\psi (z) = l(z)/\beta$ and $\widehat{\psi}(z_0^i) = \gamma^i$.
Such twisting kernels will play a prominent role in this survey, not only in the context of optimal proposals 
\cite{sr:Doucet,sr:arul02} but also in the context of the Schr\"odinger approach to data assimilation, 
that is, to scenario (C) below.
\end{remark}

\medskip

\noindent
The following scenarios of how to tackle Problem \ref{ps:problem1}, that is, how to
produce the desired samples $\widehat{z}_1^i$, $i=1,\ldots,M$, 
from the filtering PDF \R{eq:filter_PDF}, will be considered in this paper.

\medskip

\begin{figure}[h]
\begin{center}
\includegraphics[width=0.8\textwidth,trim = 0 0 20 0,clip]{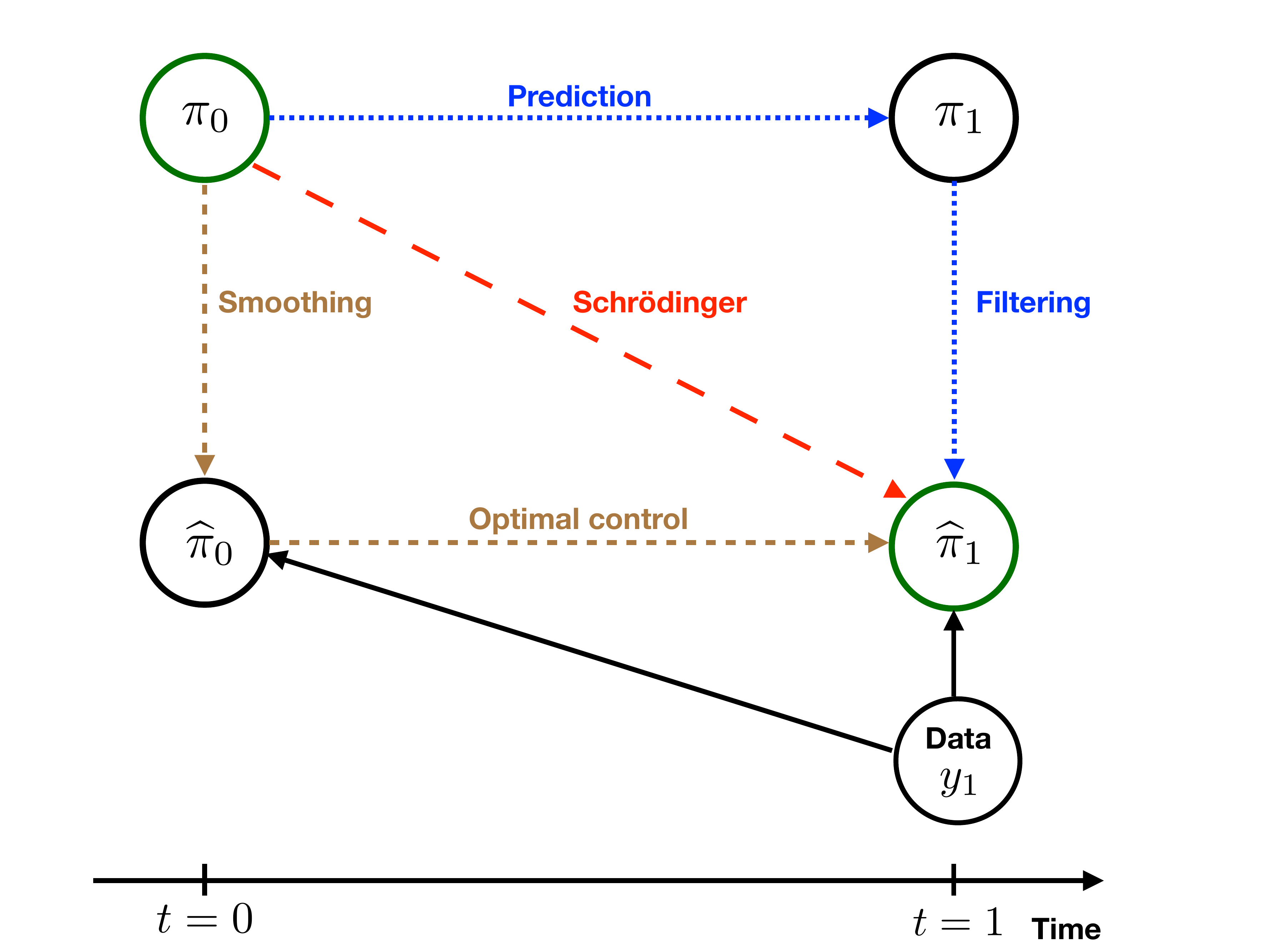}
\end{center}
\caption{Schematic illustration of a single data assimilation cycle. The distribution $\pi_0$ characterises the distribution of
states conditioned on all observations up to and including $t_0$, which we set here to $t=0$ for simplicity. The predictive distribution 
at time $t_1=1$, as generated by the model dynamics, is denoted by $\pi_1$. Upon assimilation of the data $y_1$ 
and application of Bayes' formula, one obtains the
filtering distribution $\widehat{\pi}_1$. The conditional distribution of states at time $t_0$ conditioned on all the available data
including $y_1$ is denoted by $\widehat{\pi}_0$. Control theory provides the adjusted model dynamics for
transforming $\widehat{\pi}_0$ into $\widehat{\pi}_1$. Finally, the Schr\"odinger problem links $\pi_0$ and $\widehat{\pi}_1$
in the form of a penalised boundary value problem in the space of joint probability measures. 
Data assimilation scenario (A) corresponds to the dotted lines, scenario (B) to the short-dashed lines, and scenario (C) to the long-dashed line.}
\label{fig:overview2}
\end{figure}

\begin{definition} \label{ps:scenarios}
We define the following three scenarios of how to tackle Problem \ref{ps:problem1}.

\begin{itemize}
\item[(A)] We first produces samples, $z_1^i$, from the forecast PDF $\pi_1$ and then transform those samples into samples,
$\widehat{z}_1^i$, from $\widehat{\pi}_1$. This can be viewed as introducing a Markov transition kernel $q_1(\widehat{z}_1|z_1)$ 
with the property that
\begin{equation} \label{eq:transform_step}
\widehat{\pi}_1(\widehat{z}_1) = \int q_1(\widehat{z}_1|z_1)\,\pi_1(z_1)\,\dd z_1.
\end{equation}
Techniques from optimal transportation can be used  to find appropriate 
transition kernels \cite{sr:Villani,sr:Villani2,sr:reichcotter15}. 

\medskip

\item[(B)] We first produce $M$ samples from the smoothing PDF \R{eq:smoothing_PDF0} via resampling with replacement 
and then sample from $\widehat{\pi}_1$ using the smoothing transition kernels \R{eq:smoothing_kernel}. 
The resampling can be represented in terms of a Markov transition matrix $Q_0 \in \mathbb{R}^{M\times M}$ such that
\begin{equation*}
\gamma = Q_0 \,p.
\end{equation*}
Here we have introduced the associated probability vectors 
\begin{equation} \label{eq:definition_eM}
\gamma = \left(\Frac{\gamma^1}{M},\ldots,\Frac{\gamma^M}{M}\right)^\T \in \mathbb{R}^M\,,\qquad  p = 
\left(\Frac{1}{M},\ldots,\Frac{1}{M} \right)^\T
\in \mathbb{R}^M .
\end{equation}
Techniques from optimal transport will be explored to find such Markov transition matrices in Section \ref{sec:numerics}.

\medskip

\item[(C)] We directly seek Markov transition kernels $q^\ast_+(z_1|z_0^i)$, $i=1,\ldots,M$, with the property that
\begin{equation} \label{eq:Schroedinger_kernel}
\widehat{\pi}_1(z_1) = \frac{1}{M} \sum_{i=1}^M q^\ast_+(z_1|z_0^i)
\end{equation}
and then draw a single sample, $\widehat{z}_1^i$, from each kernel $q^\ast_+(z_1|z_0^i)$. Such kernels 
can be found by solving a  Schr\"odinger problem \cite{sr:L14,sr:CGP14} as demonstrated in Section \ref{sec:SP}. 
\end{itemize}

\end{definition}

\medskip

\noindent
Scenario (A) forms the basis of the classical bootstrap particle filter \cite{sr:Doucet,sr:Liu,sr:crisan,sr:arul02} 
and also provides the starting point for many currently 
used ensemble-based data assimilation algorithms \cite{sr:evensen,sr:reichcotter15,sr:stuart15}. 
Scenario (B) is also well known in the context of particle filters under the notion of  
optimal proposal densities \cite{sr:Doucet,sr:arul02,sr:FK18}. Recently there has been a renewed interest in scenario
(B) from the perspective of optimal control and twisting approaches \cite{sr:GJL17,sr:HBDD18,sr:KR16,sr:RK17}.
Finally, scenario (C) has not yet been explored in the context of particle filters and data assimilation, primarily because the required kernels
$q^\ast_+$ are typically not available in closed form or cannot be easily sampled from. However, as we will argue in this paper, 
progress on the numerical solution of Schr\"odinger's problem \cite{sr:cuturi13,sr:PC18} turns scenario (C) into a viable option 
in addition to providing a unifying mathematical framework for data assimilation. 

We emphasise that not all existing particle
methods fit into these three scenarios. For example, the methods put forward by \citeasnoun{sr:leeuwen15} are based on proposal densities 
which attempt to overcome limitations of scenario (B) and which lead to less variable particle weights, 
thus attempting to obtain particle filter implementations closer to what we denote here as scenario (C).
More broadly speaking, the exploration of alternative proposal densities in the context of data assimilation has started only recently. 
See, for example, \citeasnoun{sr:VEW12}, \citeasnoun{sr:MTAC12}, \citeasnoun{sr:leeuwen15}, \citeasnoun{sr:PLKBJ18}, and
 \citeasnoun{sr:vLKNPR18}.

The accuracy of an ensemble--based data assimilation method can be characterised in terms of its effective sample
size $M_{\rm eff}$ \cite{sr:Liu}. The relevant effective sample size for scenario (B) is, for example, given by
\begin{equation*}
M_{\rm eff} = \frac{M^2}{\sum_{i=1}^M (\gamma^i)^2} = \frac{1}{\|\gamma\|^2}.
\end{equation*}
We find that $M\ge M_{\rm eff} \ge 1$ and the accuracy of a data assimilation step decreases 
with decreasing $M_{\rm eff}$, that is, the convergence rate $1/\sqrt{M}$ of a standard Monte Carlo method is replaced
by $1/\sqrt{M_{\rm eff}}$ \cite{sr:APPSS17}. 
Scenario (C) offers a route around this problem by bridging $\pi_0$ with $\widehat{\pi}_1$ directly, that is, solving the Schr\"odinger problem
delivers the best possible proposal densities leading to equally weighted particles without the need for resampling.\footnote{The kernel 
\R{eq:smoothing_kernel} is called the optimal proposal in the particle filter community. However, the kernel 
\R{eq:smoothing_kernel} is suboptimal in the broader framework considered in this paper.}

\begin{figure}
\begin{center}
\includegraphics[width=0.8\textwidth,trim = 0 0 0 0,clip]{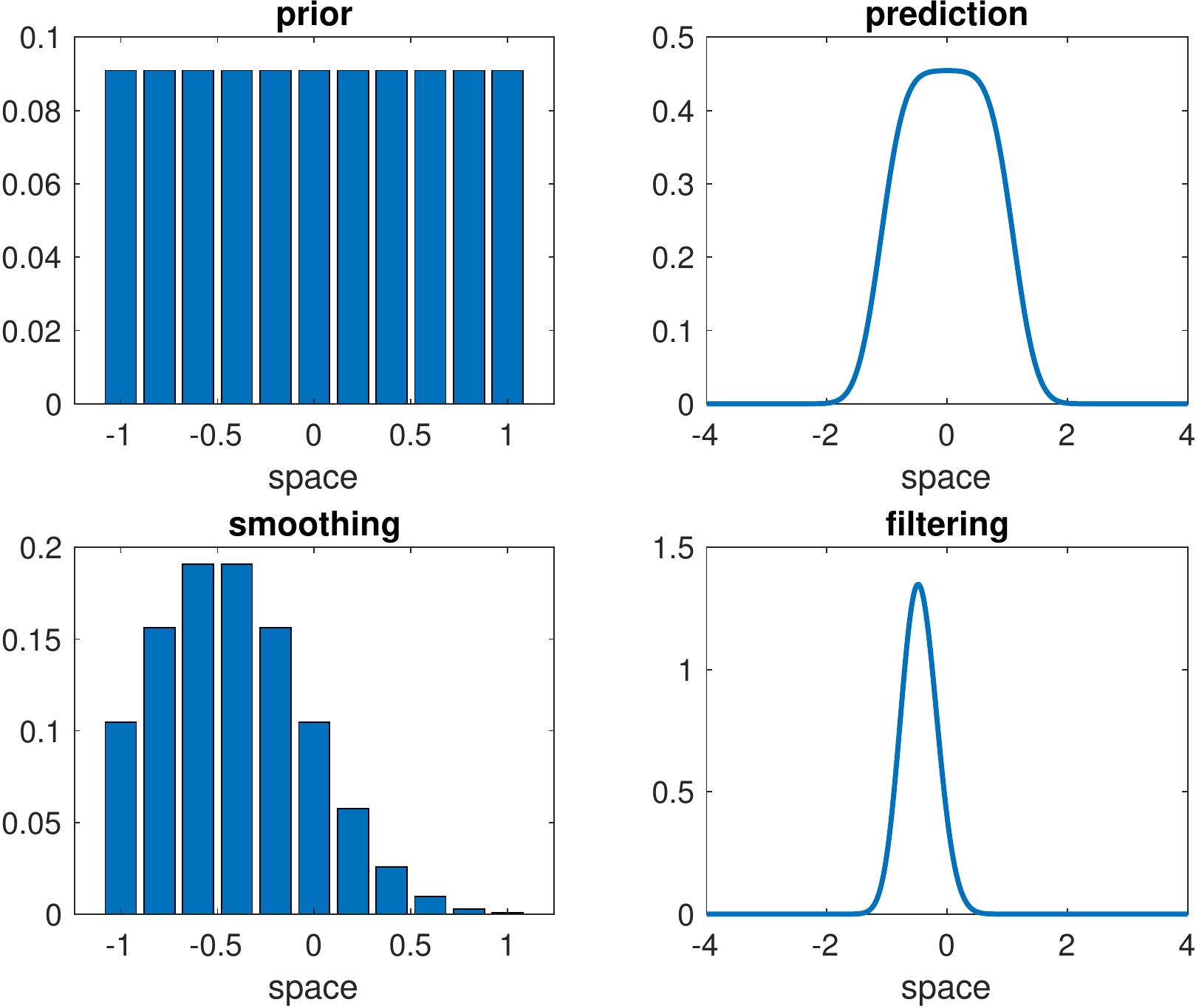}
\end{center}
\caption{The the initial PDF $\pi_0$, the forecast PDF $\pi_1$, the filtering PDF $\widehat{\pi}_1$, and
the smoothing PDF $\widehat{\pi}_0$ for a simple Gaussian transition kernel.}
\label{fig:example1a}
\end{figure}

\begin{example} \label{ex:example1}
We illustrate the three scenarios with a simple example. The prior samples are given by $M=11$ equally spaced
particles $z_0^i\in \mathbb{R}$ from the interval $[-1,1]$. The forecast PDF $\pi_1$ is provided by
\begin{equation*}
\pi_1(z) = \frac{1}{M} \sum_{i=1}^M \frac{1}{(2\pi)^{1/2}\sigma}\exp \left( -\Frac{1}{2\sigma^2} (z-z_0^i)^2\right)
\end{equation*}
with variance $\sigma^2 = 0.1$. The likelihood function is given by
\begin{equation*}
\pi(y_1|z) = \frac{1}{(2\pi R)^{1/2}}\exp \left(-\Frac{1}{2R} (y_1-z)^2 \right)
\end{equation*}
with $R = 0.1$ and $y_1 = -0.5$. The implied filtering and smoothing distributions can be found in 
Figure \ref{fig:example1a}. Since $\widehat{\pi}_1$ is in the form of a weighted Gaussian mixture distribution, the
Markov chain leading from $\widehat{\pi}_0$ to $\widehat{\pi}_1$ can be stated explicitly, that is, \R{eq:smoothing_kernel}
is provided by
\begin{equation} \label{eq:smoothing_example1}
\widehat{q}_+(z_1|z_0^i) = \frac{1}{ (2\pi)^{1/2} \widehat{\sigma}} \exp \left(- \Frac{1}{2\widehat{\sigma}^2} (
\bar z_1^i -z_1)^2 \right)
\end{equation}
with
\begin{equation*}
\widehat{\sigma}^2 = \sigma^2 - \frac{\sigma^4}{\sigma^2 + R}, \quad 
\bar z_1^i = z_0^i - \frac{\sigma^2}{\sigma^2 + R} (z_0^i - y_1)\,.
\end{equation*}
The resulting transition kernels are displayed in Figure \ref{fig:example1b} together with the corresponding 
transition kernels for the Schr\"odinger approach, which connects $\pi_0$ directly with $\widehat{\pi}_1$. 
\end{example}

\begin{figure}
\begin{center}
\includegraphics[width=0.45\textwidth,trim = 0 0 0 0,clip]{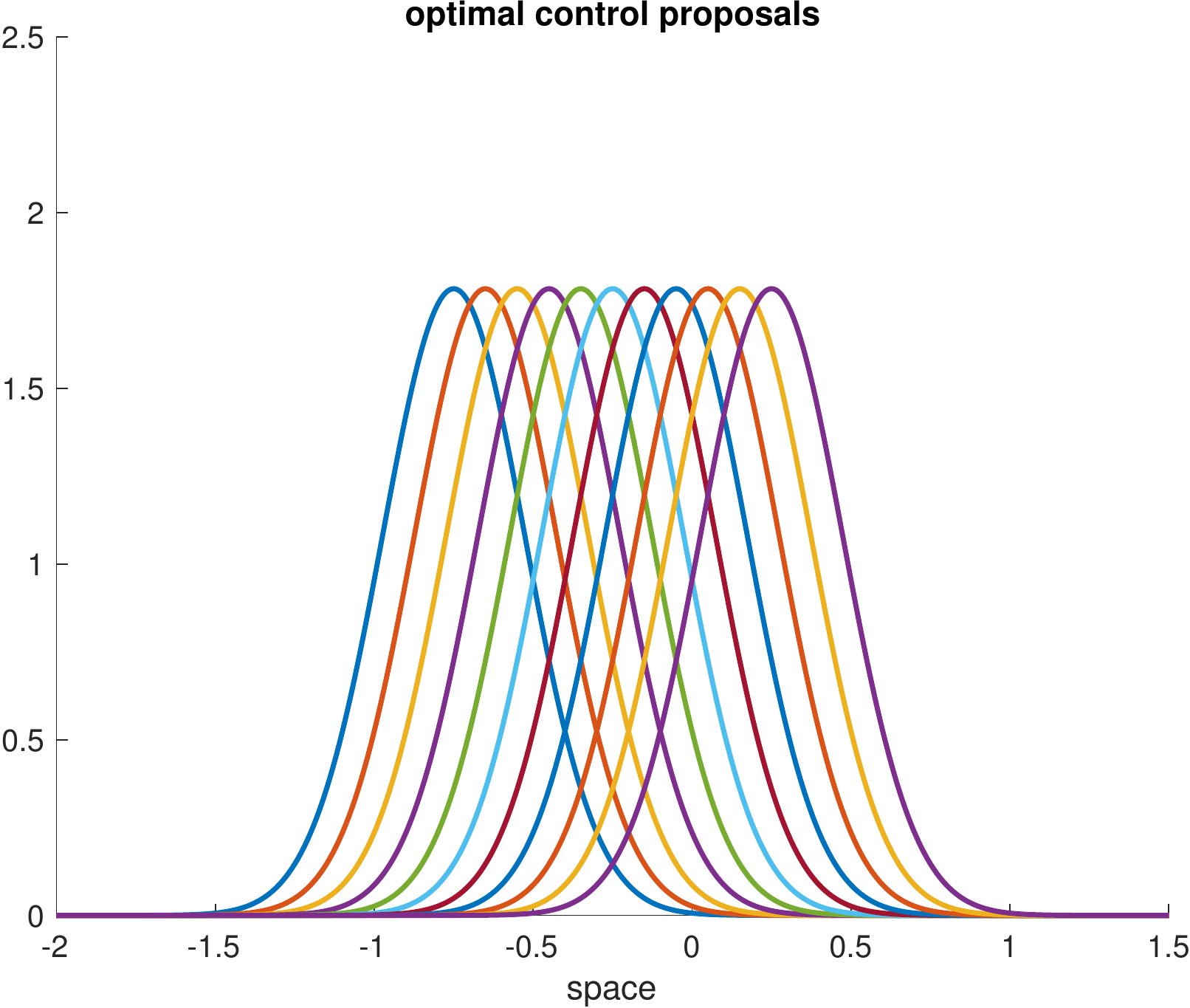} $\quad$
\includegraphics[width=0.45\textwidth,trim = 0 0 0 0,clip]{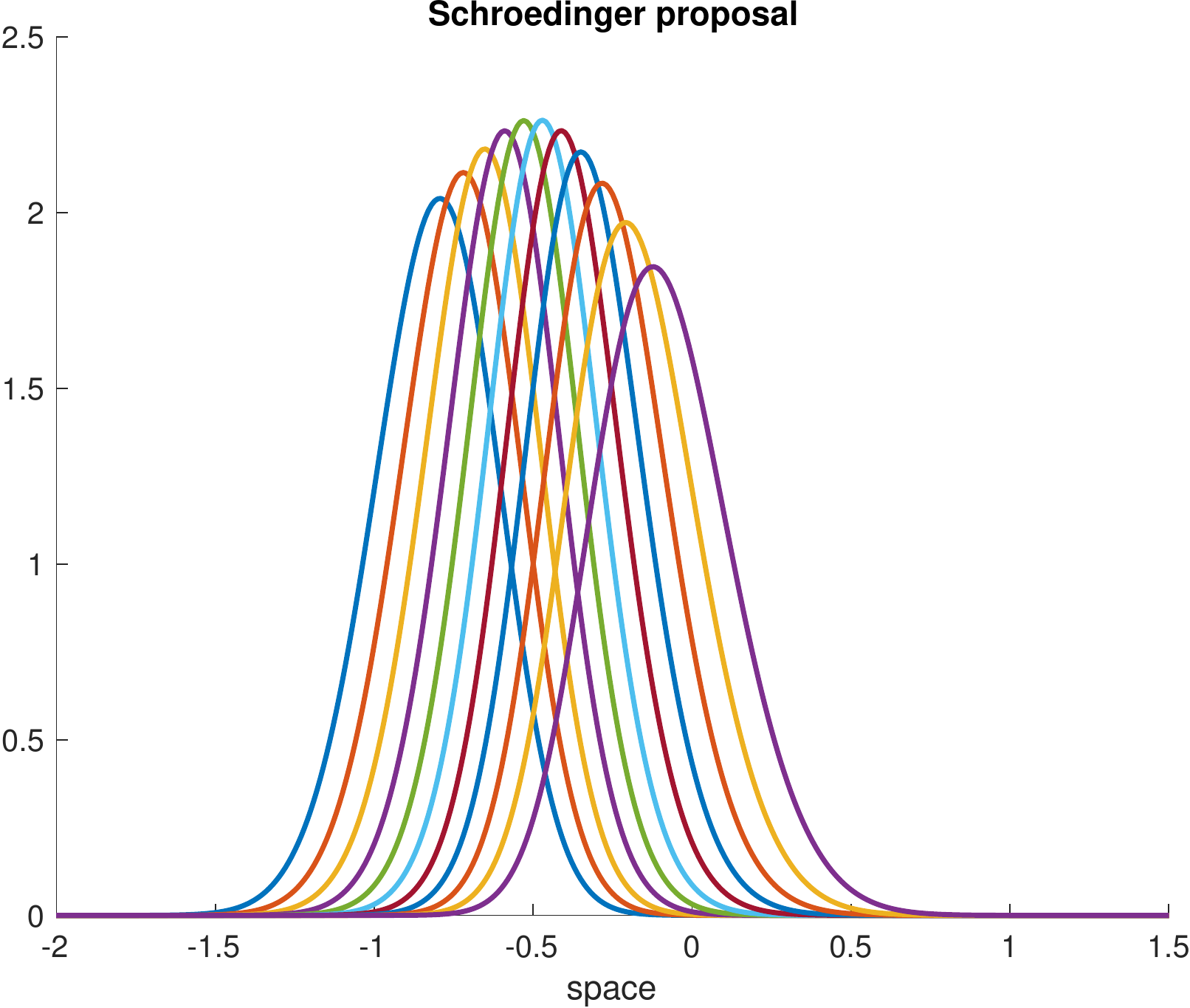}
\end{center}
\caption{Left panel: The transition kernels \R{eq:smoothing_example1} for the $M=11$ different
particles $z_0^i$. These correspond to the optimal control path in figure \ref{fig:overview2}. Right panel: The corresponding 
transitions kernels, which lead directly from $\pi_0$ to $\widehat{\pi}_1$. These
correspond to the Schr\"odinger path in figure \ref{fig:overview2}. Details of how to compute these Schr\"odinger 
transition kernels, $q_+^\ast(z_1|z_0^i)$, can be found in Section \ref{sec:Gaussian_Schrodinger}.}
\label{fig:example1b}
\end{figure}

\begin{remark}
It is often assumed in optimal control or rare event simulations arising from statistical mechanics that $\pi_0$ in \R{eq:Prediction} is a point measure, 
that is, the starting point of the simulation is known exactly. See, for example, \citeasnoun{sr:HRSZ17}. This corresponds to
\R{eq:initial_pdf} with $M=1$. It turns out that the associated smoothing problem becomes equivalent to Schr\"odinger's 
problem under this particular setting since the distribution at $t=0$ is fixed.
\end{remark}

\medskip

\noindent
The remainder of this section is structured as follows. We first recapitulate the pure prediction problem for discrete-time 
Markov processes and continuous-time diffusion processes, after which we discuss the filtering and smoothing problem for
a single data assimilation step as relevant for scenarios (A) and (B). The final subsection is devoted to the Schr\"odinger problem  
\cite{sr:L14,sr:CGP14} of bridging the filtering distribution, $\pi_0$, at $t=0$ directly with the  filtering distribution, $\widehat{\pi}_1$, 
at $t=1$, thus leading to scenario (C).

\subsection{Prediction} \label{sec:dp}
We assume under the chosen computational setting that we have access to $M$ samples $z_0^i \in \mathbb{R}^{N_z}$, 
$i=1,\ldots,M$, from the filtering distribution at $t=0$.  We also assume that we know (explicitly or implicitly) the 
forward transition probabilities, $q_+(z_1|z_0^i)$, of the underlying Markovian stochastic process. This leads to
the forecast PDF, $\pi_1$, as given by \R{eq:prediction}.

Before we consider two specific examples, we introduce two concepts related to the forward transition kernel which we will need later
in order to address scenarios (B) \& (C) from Definition \ref{ps:scenarios}.

We first introduce the backward transition kernel $q_-(z_0|z_1)$, which is defined 
via the equation
\begin{equation*} 
q_-(z_0|z_1) \,\pi_1 (z_1) = q_+(z_1|z_0)\,\pi_0(z_0).
\end{equation*}
Note that $q_-(z_0|z_1)$ as well as $\pi_0$ are not absolutely continuous with respect to the underlying Lebesque measure, that is,
\begin{equation} \label{eq:backward_transition_kernel}
q_-(z_0|z_1) = \frac{1}{M} \sum_{i=1}^M \frac{q_+(z_1|z_0^i)}{\pi_1(z_1)}\,\delta(z_0-z_0^i).
\end{equation}
The backward transition kernel $q_-(z_1|z_0)$ reverses the prediction process in the sense that
\begin{equation*}
\pi_0(z_0) = \int q_-(z_0|z_1)\,\pi_1(z_1)\,\dd z_1.
\end{equation*}

\medskip

\begin{remark} Let us assume that the detailed balance 
\begin{equation*}
q_+(z_1|z_0)\,\pi(z_0) = q_+(z_0|z_1)\,\pi(z_1)
\end{equation*}
holds for some PDF $\pi$ and forward transition kernel $q_+(z_1|z_0)$. Then $\pi_1 = \pi$ for $\pi_0 = \pi$ (invariance of $\pi$)
and $q_-(z_0|z_1) = q_+(z_1|z_0)$.
\end{remark}

\medskip

\noindent
We next introduce a class of forward transition kernels using the concept of twisting \cite{sr:GJL17,sr:HBDD18}, which is 
an application of Doob's H-transform technique \cite{sr:Doob84}. 
\medskip

\begin{definition} \label{def:twisted_kernel}
Given a non-negative twisting function $\psi(z_1)$ such that the modified transition kernel 
\R{eq:q-twisted} is well-defined, one can define the twisted forecast PDF
\begin{equation} \label{eq:twisted-predicted-PDF}
\pi_1^\psi (z_1) := \frac{1}{M} \sum_{i=1}^M q_+^\psi (z_1|z_0^i) = \frac{1}{M} \sum_{i=1}^M \frac{\psi(z_1)}{\widehat{\psi}(z_0^i)}\, q_+(z_1|z_0^i) . 
\end{equation}
The PDFs $\pi_1$ and $\pi_1^\psi$ are related by
\begin{equation} \label{eq:importance_proposal}
\frac{\pi_1(z_1)}{\pi_1^\psi (z_1)} =  \frac{\sum_{i=1}^M q_+(z_1|z_0^i)}{\sum_{i=1}^M 
\Frac{\psi(z_1)}{ \widehat{\psi}(z_0^i)}\,q_+(z_1|z_0^i)}\,.
\end{equation}
\end{definition}

\medskip

\noindent
Equation \R{eq:importance_proposal} gives rise to importance weights
\begin{equation} \label{eq:importance_weights00}
w^i \propto \frac{\pi_1(z_1^i)}{\pi_1^\psi (z_1^i)}
\end{equation}
for samples $z_1^i=Z_1^i(\omega)$ drawn from the twisted forecast PDF, that is,
\begin{equation*}
Z_1^i \sim q^\psi_+(\cdot \,|z_0^i)
\end{equation*}
and
\begin{equation*}
\pi_1(z) \approx \frac{1}{M} \sum_{i=1}^M w^i \,\delta(z-z_1^i)
\end{equation*}
in a weak sense. Here we have assumed that the normalisation constant in \R{eq:importance_weights00} is chosen such that
\begin{equation} \label{eq:normalised_w}
\sum_{i=1}^M w^i = M .
\end{equation}
Such twisted transition kernels will become important when looking at the filtering and smoothing as well as 
the Schr\"odinger problem later in this section.

Let us now discuss a couple of specific models which give rise to transition kernels $q_+(z_1|z_0)$. 
These models will be used throughout
this paper to illustrate mathematical and algorithmic concepts.

\subsubsection{Gaussian model error} \label{sec:Gauss1}

Let us consider the discrete-time stochastic process
\begin{equation} \label{eq:discrete_time_Gaussian}
Z_1 = \Psi(Z_0) + \gamma^{1/2}  \Xi_0
\end{equation}
for given map $\Psi:\mathbb{R}^{N_z} \to \mathbb{R}^{N_z}$,  
scaling factor $\gamma>0$, and  Gaussian distributed random variable $\Xi_0$ with mean zero and covariance matrix $B\in 
\mathbb{R}^{N_z\times N_z}$. The associated  forward transition kernel is given by
\begin{equation} \label{eq:Gaussian_kernel}
q_+(z_1|z_0) = {\rm n}(z_1;\Psi(z_0),\gamma B) .
\end{equation}
Recall that we have introduced the shorthand ${\rm n}(z;\bar z,P)$ for the 
PDF of a Gaussian random variable with mean $\bar z$ and covariance matrix $P$.

Let us consider a twisting potential $\psi$ of the form
\begin{equation*}
\psi(z_1) \propto \exp \left( -\frac{1}{2}(Hz_1-d)^\T R^{-1}(Hz_1 - d)\right)
\end{equation*}
for given $H\in \mathbb{R}^{N_z\times N_d}$, $d\in \mathbb{R}^{N_d}$, and covariance matrix $R\in
\mathbb{R}^{N_d\times N_d}$. We define
\begin{equation} \label{eq:Kalman_gain1}
K := BH^\T (HBH^\T +\gamma^{-1}R)^{-1}
\end{equation}
and 
\begin{equation} \label{eq:Bz}
\bar B := B - KHB, \qquad \bar z_1^i := \Psi(z_0^i) - K(H\Psi(z_0^i) - d).
\end{equation}
The twisted forward transition kernels are given by
\begin{equation*}
q^\psi_+(z_1|z_0^i) = {\rm n}(z_1;\bar z_1^i,\gamma \bar B)
\end{equation*}
and 
\begin{equation*}
\widehat{\psi}(z_0^i) \propto \exp \left(-\frac{1}{2}(H \Psi(z_0^i)-d)^\T (R+\gamma HBH^\T)^{-1} (H\Psi(z_0^i)-d)\right)
\end{equation*}
for $i=1,\ldots,M$.

\subsubsection{SDE models}
Consider the (forward) SDE \cite{sr:P14}
\begin{equation} \label{eq:Forward-SDE}
\dd Z_t^+ = f_t(Z_t^+) \,\dd t + \gamma^{1/2}  \,\dd W_t^+
\end{equation}
with initial condition $Z_0^+ = z_0$ and $\gamma >0$. Here $W_t^+$ stands for standard Brownian motion
in the sense that the distribution of $W_{t+\Delta t}^+$, $\Delta t>0$, conditioned on $w_t^+ = W_t^+(\omega)$ 
is Gaussian with mean $w_t^+$ and covariance matrix $\Delta t\,I$ \cite{sr:P14} and the process $Z_t^+$ is adapted to 
$W_t^+$.

The resulting time-$t$ transition kernels $q_t^+(z|z_0)$  from time zero to time $t$, $t\in (0,1]$,
satisfy the Fokker-Planck equation \cite{sr:P14}
\begin{equation*}
\partial_t q_t^+(\cdot\,|z_0) = -\nabla_{z} \cdot \left( q_t^+(\cdot\,|z_0) f_t \right) +  \Frac{\gamma}{2} \Delta_{z} q_t^+(\cdot\,|z_0)
\end{equation*}
with initial condition $q_0^+(z|z_0) = \delta(z-z_0)$, and the time-one forward transition kernel $q_+(z_1|z_0)$ is given by
\begin{equation*}
q_+ (z_1|z_0) = q_1^+(z_1|z_0)\,.
\end{equation*}

We introduce the operator ${\cal L}_t$ by
\begin{equation*}
{\cal L}_t g := \nabla_{z} g \cdot f_t  +  \Frac{\gamma}{2} \Delta_{z} g 
\end{equation*}
and its adjoint ${\cal L}_t^\dagger$ by
\begin{equation} \label{eq:FP_operator}
{\cal L}_t^\dagger \pi  := -\nabla_{z} \cdot \left( \pi\, f_t \right) +  \Frac{\gamma}{2} \Delta_{z} \pi 
\end{equation}
\cite{sr:P14}. We call ${\cal L}_t^\dagger$ the Fokker-Planck operator and ${\cal L}_t$ the generator of
the Markov process associated to the SDE \R{eq:Forward-SDE}.

Solutions (realisations) $z_{[0,1]}= Z^+_{[0,1]}(\omega)$ of the SDE \R{eq:Forward-SDE} with initial conditions 
drawn from $\pi_0$ are continuous functions
of time, that is, $z_{[0,1]} \in \mathcal{C} := C([0,1],\mathbb{R}^{N_z})$, and define a probability measure 
$\mathbb{Q}$ on $\mathcal{C}$, that is,
\begin{equation*}
Z^+_{[0,1]} \sim \mathbb{Q}.
\end{equation*}
We note that the marginal distributions $\pi_t$ of $\mathbb{Q}$, given by
\begin{equation*}
\pi_t(z_t) = \int q_t^+(z_t|z_0)\,\pi_0(z_0)\,\dd z_0 ,
\end{equation*}
also satisfy the Fokker-Planck equation, that is,
\begin{equation} \label{eq:FPE1}
\partial_t \pi_t = {\cal L}_t^\dagger\,\pi_t = -\nabla_{z} \cdot \left( \pi_t\, f_t \right) +  \Frac{\gamma}{2} \Delta_{z} \pi_t 
\end{equation}
for given PDF $\pi_0$ at time $t=0$.

Furthermore, we can rewrite the Fokker--Planck equation \R{eq:FPE1} in the form
\begin{equation}  
\partial \pi_t 
= -\nabla_{z} \cdot \left( \pi_t (f_t - \gamma  \nabla_z \log \pi_t) \right) -  \Frac{\gamma}{2} \Delta_z \pi_t ,
\label{eq:BFP}
\end{equation}
which allows us to read off from \R{eq:BFP} the backward SDE
\begin{align} \nonumber
\dd Z_t^- &= f_t(Z_t^-) \,\dd t  - \gamma \nabla_z \log \pi_t \,\dd t+  \gamma^{1/2}  \,\dd W_t^- ,\\
&= b_t(Z_t^-) \,\dd t  +  \gamma^{1/2}  \,\dd W_t^-  \label{eq:Backward-SDE}
\end{align}
with final condition $Z_1^- \sim \pi_1$, $W_t^-$ backward Brownian motion, and density-dependent drift term
\begin{equation*}
b_t(z) := f_t(z)   - \gamma \nabla_z \log \pi_t 
\end{equation*}
\cite{sr:N84,sr:CGP14}. Here backward Brownian motion is to be understood in the sense that
the distribution of $W_{t-\Delta \tau}^-$, $\Delta \tau>0$, conditioned on $w_t^- = W_t^-(\omega)$ is Gaussian with mean 
$w_t^-$ and covariance matrix $\Delta \tau\,I$ and all other properties of Brownian motion appropriately adjusted. 
The process $Z_t^-$ is adapted to $W_t^-$.

\medskip

\begin{lemma}
The backward SDE \R{eq:Backward-SDE} induces a corresponding backward transition kernel  from time one
to time $t=1-\tau$ with $\tau \in [0,1]$, denoted by $q_{\tau}^-(z|z_1)$, which satisfies the time-reversed Fokker--Planck equation
\begin{equation*}
\partial_\tau q_{\tau}^-(\cdot\,|z_1) = \nabla_{z} \cdot \left( q_{\tau}^-(\cdot\,|z_1) \,b_{1-\tau} \right) +  \Frac{\gamma}{2} \Delta_{z} q_{\tau}^-(\cdot\,|z_1)
\end{equation*}
with boundary condition $q_0^-(z|z_1) = \delta(z-z_1)$ at $\tau = 0$ (or, equivalently, at $t=1$). 
The induced backward transition kernel $q_-(z_0|z_1)$ is then given by
\begin{equation*}
q_-(z_0|z_1) = q^-_1(z_0|z_1)
\end{equation*}
and satisfies \R{eq:backward_transition_kernel}.
\end{lemma}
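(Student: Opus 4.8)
The plan is to obtain $q_\tau^-(\cdot\,|z_1)$ in closed form by means of a time-reversed Bayes' formula, to verify the asserted parabolic equation by direct differentiation, and then to read off \R{eq:backward_transition_kernel} by setting $\tau=1$. First I would introduce the forward transition density $q^+_{s\to 1}(z_1|z)$ of \R{eq:Forward-SDE} from an intermediate time $s\in[0,1)$ to time one; regarded as a function of the starting time and starting point it satisfies the backward Kolmogorov equation $-\partial_s q^+_{s\to 1}(z_1|\cdot) = {\cal L}_s\,q^+_{s\to 1}(z_1|\cdot)$ with terminal value $q^+_{1\to 1}(z_1|z)=\delta(z-z_1)$, while $\pi_t$ obeys the Fokker--Planck equation \R{eq:FPE1}. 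Since \R{eq:Backward-SDE} was read off from \R{eq:BFP} precisely so as to be the time reversal of the forward diffusion \R{eq:Forward-SDE} on $[0,1]$ (Nelson's time-reversal theorem \cite{sr:N84}), the joint law of $(Z^-_{1-\tau},Z^-_1)$ equals that of $(Z^+_{1-\tau},Z^+_1)$, and disintegrating this joint law over the value $z_1$ of the time-one marginal $\pi_1$ would give the representation
\begin{equation*}
q_\tau^-(z|z_1) \;=\; \frac{q^+_{1-\tau\to 1}(z_1|z)\,\pi_{1-\tau}(z)}{\pi_1(z_1)}, \qquad \tau\in[0,1].
\end{equation*}

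Next I would substitute this representation into the claimed equation. Writing $q=q_\tau^-(z|z_1)$, $p=q^+_{1-\tau\to 1}(z_1|z)$ and $\mu=\pi_{1-\tau}(z)$, the chain rule and the two evolution equations above give $\partial_\tau p = {\cal L}_{1-\tau}p$ and $\partial_\tau\mu = -{\cal L}^\dagger_{1-\tau}\mu$. A short computation with the product rule, the identities $(\nabla_z p\cdot f_{1-\tau})\mu+p\,\nabla_z\cdot(\mu\,f_{1-\tau})=\nabla_z\cdot(p\,f_{1-\tau}\,\mu)$ and $\mu\Delta_z p-p\Delta_z\mu=\Delta_z(p\mu)-2\nabla_z\cdot(p\nabla_z\mu)$, together with the definition $b_t=f_t-\gamma\nabla_z\log\pi_t$, should then collapse $\partial_\tau q$ to $\nabla_z\cdot(q\,b_{1-\tau})+\Frac{\gamma}{2}\Delta_z q$, which is the stated time-reversed Fokker--Planck equation; the boundary value at $\tau=0$ is immediate because $q^+_{1\to1}(z_1|z)=\delta(z-z_1)$ forces $q_0^-(z|z_1)=\delta(z-z_1)\,\pi_1(z)/\pi_1(z_1)=\delta(z-z_1)$. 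Finally, evaluating the representation at $\tau=1$ gives $q_-(z_0|z_1)=q^-_1(z_0|z_1)=q_+(z_1|z_0)\,\pi_0(z_0)/\pi_1(z_1)$, and substituting the empirical measure \R{eq:initial_pdf} for $\pi_0$ reproduces \R{eq:backward_transition_kernel}.

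The main obstacle is the justification of the representation itself, that is, that \R{eq:Backward-SDE} really is the pathwise time reversal of \R{eq:Forward-SDE}. Some care is needed here because $\pi_0$ is atomic, so that the drift $b_t$ is singular as $t\downarrow 0$ (equivalently $\tau\uparrow 1$) and the representation has to be read as an identity of (sub-)probability kernels, the limit $\tau\uparrow 1$ being taken in the distributional sense; on $[0,1)$, however, all densities involved are smooth and strictly positive on the support of $\pi_{1-\tau}$, so the differentiation above is legitimate. If one prefers to bypass the time-reversal theorem, I would instead derive the $\tau$-equation from the interpretation of \R{eq:Backward-SDE} given in the text --- the time change $\tau=1-t$ turns it into an ordinary forward SDE with drift $-b_{1-\tau}$, whose Fokker--Planck operator is $\nabla_z\cdot(b_{1-\tau}\,\cdot)+\Frac{\gamma}{2}\Delta_z$, so that its transition densities with $\delta$ initial data solve the asserted equation --- and then use uniqueness for this linear parabolic equation, applied to $g_\tau(z):=\int q_\tau^-(z|z_1)\,\pi_1(z_1)\,\dd z_1$, which solves the same equation as $\pi_{1-\tau}$ with the same data, to conclude that $q^-_1$ reverses the prediction and that \R{eq:backward_transition_kernel} holds after disintegration.
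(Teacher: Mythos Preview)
Your proposal is correct, and in fact more thorough than the paper's own argument, which is essentially a one-liner: the backward SDE \R{eq:Backward-SDE} was constructed so that its marginals obey \R{eq:BFP}, and the time change $\tau=1-t$ converts this into the stated $\tau$-forward Fokker--Planck equation with drift $-b_{1-\tau}$; the transition kernel of this time-reversed process with $\delta$-initial data therefore solves the asserted equation. The paper says nothing further, in particular it does not spell out why the resulting time-one kernel coincides with \R{eq:backward_transition_kernel}.

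Your main route is genuinely different and more constructive: you write down the candidate kernel $q_\tau^-(z|z_1)=q^+_{1-\tau\to1}(z_1|z)\,\pi_{1-\tau}(z)/\pi_1(z_1)$ via time-reversal (Bayes) and then verify the PDE by a direct product-rule computation using the backward Kolmogorov equation for $q^+_{s\to1}$ and the forward equation \R{eq:FPE1} for $\pi_t$. This buys you two things the paper's proof does not: (i) an explicit formula for $q_\tau^-$, from which the identity with \R{eq:backward_transition_kernel} at $\tau=1$ is immediate, and (ii) an honest discussion of the singularity at $\tau\uparrow 1$ caused by the atomic $\pi_0$. Amusingly, the alternative you sketch at the end of your proposal---deriving the $\tau$-equation directly from the forward-in-$\tau$ interpretation of \R{eq:Backward-SDE} and then appealing to uniqueness---is precisely the paper's argument, so you have covered both approaches.
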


\begin{proof} The lemma follows from the fact that the backward SDE \R{eq:Backward-SDE} implies the Fokker--Planck equation
\R{eq:BFP} and that we have reversed time by introducing $\tau = 1-t$.
\end{proof}

\medskip

\begin{remark} The notion of a backward SDE also arises in a different context where the driving Brownian motion is
still adapted to the past, that is, $W_t^+$ in our notation, and a final condition is prescribed as for
\R{eq:Backward-SDE}. See \R{eq:BSDE} below as well as Appendix D and \citeasnoun{sr:C16} for more details.
\end{remark}

\medskip

\noindent
We note that the mean-field equation, 
\begin{equation} \label{eq:mODE}
\frac{\dd}{\dd t} z_t = f_t(z_t) - \Frac{\gamma}{2} \nabla_z \log \pi_t(z_t) = \frac{1}{2}(f_t(z_t)+b_t(z_t)),
\end{equation}
resulting from \R{eq:FPE1},
leads to the same marginal distributions $\pi_t$ as the forward and backward SDEs, respectively. 
It should be kept in mind, however, that the path measure generated by \R{eq:mODE} is different from the 
path measure $\mathbb{Q}$ generated by \R{eq:Forward-SDE}.

Please also note that the backward SDE and the mean field equation \R{eq:mODE} become singular as $t\to 0$ for the given 
initial PDF \R{eq:initial_pdf}. A meaningful solution can be defined via regularisation of the Dirac delta function, that is,
\begin{equation*}
\pi_0 (z) \approx \frac{1}{M} \sum_{i=1}^M {\rm n}(z;z_0^i,\epsilon I),
\end{equation*}
and taking the limit $\epsilon \to 0$. 

We will find later that it can be advantageous to modify the given SDE (\ref{eq:Forward-SDE}) by a time-dependent
drift term $u_t(z)$, that is,
\begin{equation} \label{eq:Forward-SDE2}
\dd Z_t^+ = f_t(Z_t^+) \,\dd t + u_t(Z_t^+)\,\dd t + \gamma^{1/2}  \,\dd W_t^+ .
\end{equation}
In particular, such a modification leads to the time-continuous analog of the twisted transition kernel
\R{eq:q-twisted} introduced in Section \ref{sec:dp}. 

\medskip

\begin{lemma} \label{twisting_SDEs}
Let $\psi_t(z)$ denote the solutions of the backward Kolmogorov equation 
\begin{equation} \label{eq:BKE1}
\partial_t \psi_t = -{\cal L}_t \psi_t = -\nabla_{z} \psi_t \cdot f_t  -  \Frac{\gamma}{2} \Delta_{z} \psi_t
\end{equation}
for given final $\psi_1(z) >0$ and $t\in [0,1]$. The controlled SDE \R{eq:Forward-SDE2} with
\begin{equation} \label{eq:optimal_control_SDE}
u_t(z) := \gamma \nabla_z \log \psi_t(z)
\end{equation}
leads to a time-one forward transition kernel $q_+^\psi(z_1|z_0)$ which satisfies
\begin{equation*}
q_+^\psi (z_1|z_0) = \psi_1 (z_1)\,q_+(z_1|z_0)\,\psi_0(z_0)^{-1} ,
\end{equation*}
where $q_+(z_1|z_0)$ denotes the time-one forward transition kernel of the uncontrolled forward SDE \R{eq:Forward-SDE}.
\end{lemma}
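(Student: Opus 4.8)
The plan is to verify the claimed identity $q_+^\psi(z_1|z_0) = \psi_1(z_1)\,q_+(z_1|z_0)\,\psi_0(z_0)^{-1}$ by a Feynman--Kac / Doob $h$-transform argument, connecting the controlled forward dynamics \R{eq:Forward-SDE2} to the backward Kolmogorov evolution \R{eq:BKE1}. The key observation is that $\psi_t(z)$, being the solution of \R{eq:BKE1} with terminal data $\psi_1$, admits the Feynman--Kac representation $\psi_t(z) = \mathbb{E}[\psi_1(Z_1^+) \mid Z_t^+ = z]$, where $Z^+$ solves the \emph{uncontrolled} SDE \R{eq:Forward-SDE}; equivalently $\psi_t(z) = \int q^+_{1-t}(z_1|z)\,\psi_1(z_1)\,\dd z_1$ where $q^+$ here denotes the appropriate time-$(1-t)$ transition kernel of \R{eq:Forward-SDE} started at time $t$. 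In particular $\psi_0(z_0) = \int q_+(z_1|z_0)\,\psi_1(z_1)\,\dd z_1$, which is exactly $\widehat{\psi}(z_0)$ in the notation of \R{eq:psi_0}, so the claimed formula is precisely \R{eq:q-twisted} with $\widehat\psi = \psi_0$.

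First I would introduce the candidate kernel $k(z_1|z_0) := \psi_1(z_1)\,q_+(z_1|z_0)\,\psi_0(z_0)^{-1}$ and, more generally, its time-$t$ analogue built from the uncontrolled transition kernels and the function $\psi_t$. The main step is to show that this family of kernels solves the Fokker--Planck equation associated with the controlled generator of \R{eq:Forward-SDE2}, i.e.\ with drift $f_t + u_t = f_t + \gamma\nabla_z\log\psi_t$, and has the correct initial condition $\delta(z_1 - z_0)$ at $t=0$. By uniqueness of solutions to the (controlled) Fokker--Planck equation this identifies $k$ with $q_+^\psi$. Concretely, writing $p_t(z|z_0) := \psi_t(z)\,q^+_t(z|z_0)\,\psi_0(z_0)^{-1}$, one differentiates in $t$: the factor $q_t^+(\cdot|z_0)$ contributes $\mathcal{L}_t^\dagger q_t^+$ via \R{eq:FP_operator}, the factor $\psi_t$ contributes $-\mathcal{L}_t\psi_t$ via \R{eq:BKE1}, and combining these two through the product rule for the second-order operator produces exactly $-\nabla_z\cdot(p_t(f_t + \gamma\nabla_z\log\psi_t)) + \tfrac{\gamma}{2}\Delta_z p_t$. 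This is the classical computation showing that the Doob transform by the space--time harmonic function $\psi_t$ adds the drift $\gamma\nabla_z\log\psi_t$; I would carry it out in a couple of lines using $\Delta(\psi q) = \psi\Delta q + 2\nabla\psi\cdot\nabla q + q\Delta\psi$ and rewriting $2\nabla\psi\cdot\nabla q + q\Delta\psi$ in divergence form as $\nabla\cdot(q\nabla\psi + \psi\nabla q) - q\Delta\psi/\,\cdots$; the bookkeeping is routine. The initial condition is immediate since $q_0^+(z|z_0) = \delta(z-z_0)$ and $\psi_0(z)/\psi_0(z_0)$ evaluated against this delta gives $1$. Setting $t=1$ yields the lemma. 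An alternative, shorter route is to invoke the Cameron--Martin--Girsanov theorem directly: the Radon--Nikodym derivative of the law of \R{eq:Forward-SDE2} with respect to that of \R{eq:Forward-SDE} on path space is $\exp(\int_0^1 \gamma^{-1/2}u_t\cdot\dd W_t^+ - \tfrac{1}{2}\int_0^1 \gamma^{-1}|u_t|^2\,\dd t)$, and with $u_t = \gamma\nabla_z\log\psi_t$ one recognises, via It\^o's formula applied to $\log\psi_t(Z_t^+)$ together with \R{eq:BKE1}, that this exponential telescopes to $\psi_1(Z_1^+)/\psi_0(Z_0^+)$; taking conditional expectations given the endpoints gives the kernel identity.

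The main obstacle, and the only place where care is genuinely needed, is the well-definedness and regularity underpinning both arguments: one needs $\psi_t > 0$ throughout $[0,1]$ (so that $\log\psi_t$ and the drift $u_t$ make sense), enough smoothness of $\psi_t$ to justify the product-rule PDE manipulation and the application of It\^o's formula to $\log\psi_t(Z_t^+)$, and a non-explosion / Novikov-type condition so that the Girsanov change of measure is legitimate and $\widehat\psi(z_0) = \psi_0(z_0)$ is finite and nonzero for all $z_0$. In a survey of this kind I would simply state these as the standing assumptions already implicit in ``for given final $\psi_1(z) > 0$'' and in the phrase ``such that the modified transition kernel \R{eq:q-twisted} is well-defined'' from Definition \ref{def:twisted_kernel}, and refer to the twisting literature \cite{sr:GJL17,sr:HBDD18} and to Doob \cite{sr:Doob84} for the analytic details, keeping the proof itself at the level of the formal computation above.
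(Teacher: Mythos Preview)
Your proposal is correct. The paper itself does not give a self-contained proof of this lemma: it simply cites \citeasnoun{sr:DP91} (Theorem 2.1) and points to Appendix D, where the Girsanov route you sketch as your ``alternative, shorter'' argument is carried out explicitly---It\^o's formula applied to $\phi_t = \log\psi_t(Z_t^+)$ together with the backward Kolmogorov equation collapses the stochastic exponential to $\psi_1(Z_1^+)/\psi_0(Z_0^+)$. So your second approach is exactly the one the paper eventually spells out; your first approach, the direct Fokker--Planck verification via the Doob $h$-transform product-rule computation, is an equally standard alternative that the paper does not pursue but which is implicit in the Dai Pra reference. Your discussion of the regularity caveats (positivity of $\psi_t$, smoothness for It\^o, Novikov-type integrability) is more careful than anything the paper states and is the appropriate level of rigour for this setting.
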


\begin{proof} A proof of this lemma has, for example, been given by \citeasnoun{sr:DP91} (Theorem 2.1). See also Appendix D for
a closely related discussion based on the potential $\phi_t$ as introduced in (\ref{eq:def_phi}).
\end{proof}

\medskip

\noindent
More generally, the modified forward SDE \R{eq:Forward-SDE2} with $Z_0^+ \sim \pi_0$ generates a path measure which we denote by 
$\mathbb{Q}^u$ for given functions $u_t(z)$, $t\in [0,1]$. Realisations of this path measure 
are denoted by $z_{[0,1]}^u$. According to Girsanov's theorem \cite{sr:P14}, the two path measures $\mathbb{Q}$ and
$\mathbb{Q}^u$ are absolutely continuous with respect to each other, with Radon--Nikodym derivative 
\begin{equation} \label{eq:RND}
\frac{\dd \mathbb{Q}^u}{\dd \mathbb{Q}}_{|z_{[0,1]}^u} = \exp \left(\frac{1}{2\gamma} \int_0^1 \left( \|u_t\|^2\,\dd t +
2 \gamma^{1/2} u_t \cdot \dd W_t^+ \right)\right),
\end{equation}
provided that the Kullback--Leibler divergence, ${\rm KL}(\mathbb{Q}^u||
\mathbb{Q})$, between $\mathbb{Q}^u$ and $\mathbb{Q}$, given by
\begin{equation} \label{eq:KLu}
{\rm KL}(\mathbb{Q}^u||\mathbb{Q}) := \int \left[ \frac{1}{2\gamma } \int_0^1 \|u_t\|^2 \,\dd t  \right] \mathbb{Q}^u(\dd z^u_{[0,1]})\,,
\end{equation}
is finite. Recall that the Kullback--Leibler divergence between two path measures $\mathbb{P} \ll \mathbb{Q}$ on ${\cal C}$ 
is defined by
\begin{equation*}
{\rm KL}(\mathbb{P}||\mathbb{Q}) = \int \log \frac{\dd \mathbb{P}}{\dd \mathbb{Q}} \,\mathbb{P}(\dd z_{[0,1]}) .
\end{equation*}

If the modified SDE \R{eq:Forward-SDE2} is used to make predictions, then its solutions  $z^u_{[0,1]}$ 
need to be weighted according to the inverse Radon--Nikodym derivative
\begin{equation}\label{eq:inverse_RN}
\frac{\dd \mathbb{Q}}{\dd \mathbb{Q}^u}_{|z_{[0,1]}^u} = \exp \left(-\frac{1}{2\gamma} \int_0^1 \left( \|u_t\|^2\,\dd t +
2 \gamma^{1/2} u_t \cdot \dd W_t^+ \right)\right)
\end{equation}
in order to reproduce the desired forecast PDF $\pi_1$ of the original SDE (\ref{eq:Forward-SDE}). 

\medskip

\begin{remark} A heuristic derivation of equation \R{eq:RND} can be found in subsection \ref{sec:SDE_model_pred} below,
where we discuss the numerical approximation of SDEs by the Euler--Maruyama method. Equation \R{eq:KLu} follows immediately 
from \R{eq:RND} by noting that the expectation of Brownian motion under the path measure $\mathbb{Q}^u$ is zero.
\end{remark}

%
\subsection{Filtering and Smoothing} \label{sec:filtering_and_smoothing}
%

We now incorporate the likelihood 
\begin{equation*}
l(z_1) = \pi(y_1|z_1)
\end{equation*} 
of the data $y_1$ at time $t_1=1$. 
Bayes' theorem tells us that, given the forecast PDF $\pi_1$ at time $t_1$, the posterior PDF
$\widehat{\pi}_1$ is given by \R{eq:filter_PDF}.
The distribution $\widehat{\pi}_1$ solves the filtering problem at time $t_1$ given the data $y_1$. 
We also recall the definition of the evidence \R{eq:evidence}. The quantity ${\cal F} = -\log \beta$ is called the free energy 
in statistical physics \cite{sr:HRSZ17}.

An appropriate transition kernel $q_1(\widehat{z}_1|z_1)$, satisfying \R{eq:transform_step}, is required 
in order to complete the transition from $\pi_0$ to $\widehat{\pi}_1$ following scenario (A) from definition \ref{ps:scenarios}.
A suitable framework for finding such transition kernels is via the theory of optimal transportation
\cite{sr:Villani}. More specifically, let $\Pi_{\rm c}$ denote the set of all joint probability measures $\pi(z_1,\widehat{z}_1)$ with marginals
\begin{equation*}
\int \pi(z_1,\widehat{z}_1)\,\dd \widehat{z}_1 = \pi_1(z_1), \qquad \int \pi(z_1,\widehat{z}_1)\,\dd z_1 = \widehat{\pi}_1(\widehat{z}_1) .
\end{equation*}
We seek the joint measure $\pi^\ast (z_1,\widehat{z}_1) \in \Pi_{\rm c}$ which minimises the expected Euclidean distance between the 
two associated random variables $Z_1$ and $\widehat{Z}_1$, that is,
\begin{equation} \label{eq:optimal_coupling1}
\pi^\ast = \arg \inf_{\pi \in \Pi_{\rm c}} \int \int \|z_1-\widehat{z}_1\|^2\, \pi(z_1,\widehat{z}_1)\,\dd z_1\,\dd \widehat{z}_1 .
\end{equation}
The minimising joint measure is of the form
\begin{equation} \label{eq:optimal_coupling2}
\pi^\ast(z_1,\widehat{z}_1) = \delta(\widehat{z}_1 - \nabla_z \Phi (z_1))\,\pi_1(z_1)
\end{equation}
with suitable convex potential $\Phi$ under appropriate conditions on the PDFs 
$\pi_1$ and $\widehat{\pi}_1$ \cite{sr:Villani}. These conditions are
satisfied for dynamical systems with Gaussian model errors and typical SDE models.  Once
the potential $\Phi$ (or an approximation) is available, samples $z_1^i$, $i=1,\ldots,M$, 
from the forecast PDF $\pi_1$ can be converted into
samples $\widehat{z}_1^i$, $i=1,\ldots,M$, from the filtering distribution $\widehat{\pi}_1$ via the deterministic
transformation
\begin{equation} \label{eq:optimal_coupling3}
\widehat{z}_1^i = \nabla_z \Phi (z_1^i)\,.
\end{equation}
We will discuss in Section \ref{sec:numerics} how to approximate the transformation
\R{eq:optimal_coupling3} numerically. We will find that many of the popular data assimilation schemes, such as the ensemble
Kalman filter, can be viewed as approximations to \R{eq:optimal_coupling3} \cite{sr:reichcotter15}.

We recall at this point that classical particle filters start from the importance weights
\begin{equation*}
w^i \propto \frac{\widehat{\pi}_1(z_1^i)}{\pi_1(z_1^i)} = \frac{l(z_1^i)}{\beta} ,
\end{equation*}
and obtain the desired samples $\widehat{z}^i$ by an appropriate resampling with replacement scheme 
\cite{sr:Doucet,sr:arul02,sr:DC05} instead of applying a deterministic transformation of the form 
\R{eq:optimal_coupling3}.

\medskip

\begin{remark} 
If one replaces the forward transition kernel $q_+(z_1|z_0)$ with a twisted kernel \R{eq:q-twisted}, then, using
\R{eq:importance_proposal}, the filtering distribution \R{eq:filter_PDF} satisfies
\begin{equation} \label{eq:filter_PDF2}
\frac{\widehat{\pi}_1(z_1)}{\pi_1^\psi (z_1)} =   \frac{l(z_1)\sum_{j=1}^M q_+(z_1|z_0^j)}{\beta \sum_{j=1}^M 
\Frac{\psi(z_1)}{ \widehat{\psi}(z_0^j)}\, q_+(z_1|z_0^j) } .
\end{equation} 
Hence drawing samples $z_1^i$, $i=1,\ldots,M$, from $\pi_1^\psi$ instead of $\pi_1$ leads to 
modified importance weights
\begin{equation} \label{eq:importance_weights2}
w^i \propto \frac{l(z_1^i)\sum_{j=1}^M q_+(z_1^i|z_0^j)}{\beta \sum_{j=1}^M 
\Frac{\psi(z_1^i)}{ \widehat{\psi}(z_0^j)}\, q_+(z_1^i|z_0^j) } .
\end{equation}
We will demonstrate in Section \ref{sec:SP} that finding a twisting potential $\psi$ such that $\widehat{\pi}_1 = \pi_1^\psi$, leading to importance
weights $w^i = 1$ in \R{eq:importance_weights2}, is equivalent to solving the 
Schr\"odinger problem \R{eq:SS1a}--\R{eq:SS1d}.
\end{remark}

\medskip

\noindent
The associated smoothing distribution at time $t=0$ can be defined as follows. First introduce
\begin{equation} \label{eq:r_T}
\psi(z_1) := \frac{\widehat{\pi}_1(z_1)}{\pi_1(z_1)} = \frac{l(z_1)}{\beta} .
\end{equation}
Next we set
\begin{equation} \label{eq:backward_iteration}
\widehat{\psi}(z_0) := \int q_+(z_1|z_0)\,\psi (z_1)\,\dd z_1 = \beta^{-1} \int q_+(z_1|z_0)\,l(z_1)\,\dd z_1\,,
\end{equation}
and introduce $\widehat{\pi}_0 := \pi_0\,\widehat{\psi}$, that is,
\begin{align} \nonumber
\widehat{\pi}_0(z_0) &= \frac{1}{M} \sum_{i=1}^M \widehat{\psi}(z_0^i)   \,\delta (z_0-z_0^i) \\ \label{eq:smoothing_0}
&= \frac{1}{M} \sum_{i=1}^M \gamma^i\, \delta (z_0-z_0^i)
\end{align}
since $\widehat{\psi}(z_0^i) = \gamma^i$ with $\gamma^i$ defined by \R{eq:gamma_i}. 
\medskip

\begin{lemma}
The smoothing PDFs $\widehat{\pi}_0$ and $\widehat{\pi}_1$ satisfy
\begin{equation} \label{eq:backward_simulation}
\widehat{\pi}_0(z_0) =  \int q_-(z_0|z_1) \,\widehat{\pi}_1(z_1)\,\dd z_1
\end{equation}
with the backward transition kernel defined by \R{eq:backward_transition_kernel}. 
Furthermore, 
\begin{equation*}
\widehat{\pi}_1(z_1) = \int \widehat{q}_+(z_1|z_0)\,\widehat{\pi}_0(z_0)\,\dd z_0
\end{equation*}
with twisted forward transition kernels
\begin{equation}
 \widehat{q}_+(z_1|z_0^i) := \psi(z_1) \,q_+(z_1|z_0^i)\,\widehat{\psi}(z_0^i)^{-1}
 = \frac{l(z_1)}{\beta\,\gamma^i} q_+ (z_1|z_0^i)  \label{eq:forward_smoothing_kernel}
\end{equation}
and $\gamma^i$, $i=1,\ldots,M$, defined by \R{eq:gamma_i}.
\end{lemma}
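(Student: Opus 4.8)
The plan is to verify the two identities by direct substitution, using the definitions \R{eq:backward_transition_kernel}, \R{eq:gamma_i}, \R{eq:smoothing_0}, and the defining relation $q_-(z_0|z_1)\,\pi_1(z_1) = q_+(z_1|z_0)\,\pi_0(z_0)$ together with the fact that $\pi_0$ is the empirical measure \R{eq:initial_pdf}. First I would establish \R{eq:backward_simulation}. Inserting \R{eq:backward_transition_kernel} and the filtering PDF \R{eq:filter_PDF} into the right-hand side gives
\begin{equation*}
\int q_-(z_0|z_1)\,\widehat{\pi}_1(z_1)\,\dd z_1
= \int \frac{1}{M}\sum_{i=1}^M \frac{q_+(z_1|z_0^i)}{\pi_1(z_1)}\,\delta(z_0-z_0^i)\,\frac{l(z_1)\,\pi_1(z_1)}{\beta}\,\dd z_1 .
\end{equation*}
The factors $\pi_1(z_1)$ cancel, and carrying out the $z_1$-integral against the remaining $l(z_1)\,q_+(z_1|z_0^i)/\beta$ produces exactly $\gamma^i$ as defined in \R{eq:gamma_i}, so the expression collapses to $\frac{1}{M}\sum_i \gamma^i\,\delta(z_0-z_0^i) = \widehat{\pi}_0(z_0)$ by \R{eq:smoothing_0}. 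This is really just unwinding notation; the only point worth flagging is that one must keep track that $\widehat{\pi}_1$ here is the concrete mixture \R{eq:filter_PDF} rather than an abstract density, so that the Dirac deltas in $q_-$ make sense.

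Next I would verify the forward identity. The kernel \R{eq:forward_smoothing_kernel} is by construction a twisted kernel in the sense of \R{eq:q-twisted} with twisting function $\psi(z_1) = l(z_1)/\beta$ from \R{eq:r_T} and normaliser $\widehat{\psi}(z_0^i) = \gamma^i$ from \R{eq:backward_iteration}; in particular $\int \widehat{q}_+(z_1|z_0^i)\,\dd z_1 = 1$, so it is a genuine transition kernel. Substituting \R{eq:forward_smoothing_kernel} and $\widehat{\pi}_0$ from \R{eq:smoothing_0} into $\int \widehat{q}_+(z_1|z_0)\,\widehat{\pi}_0(z_0)\,\dd z_0$ gives
\begin{equation*}
\int \frac{1}{M}\sum_{i=1}^M \frac{l(z_1)}{\beta\,\gamma^i}\,q_+(z_1|z_0^i)\,\gamma^i\,\delta(z_0-z_0^i)\,\dd z_0
= \frac{1}{M}\sum_{i=1}^M \frac{l(z_1)}{\beta}\,q_+(z_1|z_0^i),
\end{equation*}
where the $\gamma^i$ cancel, and this is precisely $\widehat{\pi}_1(z_1)$ by \R{eq:filter_PDF}. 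The cancellation of $\gamma^i$ is the whole content here — it is the reason the normalisation in \R{eq:smoothing_0} was chosen as it was.

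Since both identities reduce to substitution and cancellation, there is no real obstacle; the only care needed is in handling the measures that are not absolutely continuous with respect to Lebesgue measure ($\pi_0$, $\widehat{\pi}_0$, and $q_-$), where the manipulations should be read as statements about the empirical measures, or equivalently justified by the regularisation $\pi_0(z) \approx \frac{1}{M}\sum_i {\rm n}(z;z_0^i,\epsilon I)$ and a limit $\epsilon \to 0$ as already invoked in the excerpt. I would also remark in passing that \R{eq:backward_simulation} and the forward identity are two sides of the same coin: both express the factorisation of the joint smoothing measure $\pi_0(z_0)\,\widehat{q}_+(z_1|z_0) = \widehat{\pi}_1(z_1)\,q_-(z_0|z_1)$, which follows from the original forward–backward relation multiplied through by $\psi(z_1)$.
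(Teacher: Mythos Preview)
Your proof is correct and follows essentially the same approach as the paper: both arguments verify the two identities by direct substitution, using $q_-(z_0|z_1)\,\pi_1(z_1) = q_+(z_1|z_0)\,\pi_0(z_0)$ and $\widehat{\pi}_1 = l\,\pi_1/\beta$ for the first identity, and the cancellation of $\gamma^i$ against $\widehat{\psi}(z_0^i)^{-1}$ for the second. The paper's version is marginally more abstract (writing $q_- = (\pi_0/\pi_1)\,q_+$ rather than expanding the empirical sum), but the content is identical.
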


\begin{proof} We note that
\begin{equation*}
q_-(z_0|z_1)\,\widehat{\pi}_1(z_1) = \frac{\pi_0(z_0)}{\pi_1(z_1)} q_+(z_1|z_0) 
\widehat{\pi}_1(z_1) = \frac{l(z_1)}{\beta} q_+(z_1|z_0)\,\pi_0(z_0) ,
\end{equation*}
which implies the first equation. The second equation follows from $\widehat{\pi}_0 = \widehat{\psi} \,\pi_0$ and
 \begin{equation*}
 \int \widehat{q}_+(z_1|z_0) \,\widehat{\pi}_0(z_0)\,\dd z_0 =  \frac{1}{M}\sum_{i=1}^M \frac{l(z_1)}{\beta}\,q_+(z_1|z_0^i) .
 \end{equation*}
In other words, we have defined a twisted forward transition kernel of the form \R{eq:q-twisted}. 
\end{proof}

\medskip

\noindent 
Seen from a more abstract perspective, we have provided an alternative formulation of the joint smoothing distribution
 \begin{equation} \label{eq:smoothing-PDF}
 \widehat{\pi}(z_0,z_1) := \frac{l(z_1)\,q_+(z_1|z_0)\,\pi_0(z_0)}{\beta}
 \end{equation}
 in the form of
 \begin{align} \nonumber
 \widehat{\pi}(z_0,z_1) &= \frac{l(z_1)}{\beta} \frac{\psi(z_1)}{\psi(z_1)} q_+(z_1|z_0) \frac{\widehat{\psi}(z_0)}{\widehat{\psi}(z_0)} \pi_0(z_0)\\
  &= \widehat{q}_+(z_1|z_0) \,\widehat{\pi}_0(z_0) \label{eq:optimal_proposal1}
  \end{align}
 because of \R{eq:r_T}.
 Note that the marginal distributions of $\widehat{\pi}$ are provided by $\widehat{\pi}_0$ and $\widehat{\pi}_1$, 
 respectively.

One can exploit these formulations computationally as follows. If one has 
generated $M$ equally weighted particles $\widehat{z}_0^j$ from the smoothing distribution \R{eq:smoothing_0} 
at time $t=0$ via resampling with replacement, then one can obtain equally weighted samples $\widehat{z}_1^j$ from the filtering distribution $\widehat{\pi}_1$ using the modified transition kernels \R{eq:forward_smoothing_kernel}.
This is the idea behind the optimal proposal particle filter \cite{sr:Doucet,sr:arul02,sr:FK18} and provides an implementation
of scenario (B) as introduced in Definition \ref{ps:scenarios}. 

\medskip

\begin{remark}
We remark that backward simulation methods use \R{eq:backward_simulation} in
order to address the smoothing problem \R{eq:Smoothing} in a sequential forward--backward manner. 
Since we are not interested in the general smoothing problem in this paper, we refer the reader to the survey by 
\citeasnoun{sr:LS13} for more details.
\end{remark}

 \begin{lemma}
 Let $\psi (z) >0$ be a twisting potential such that
 \begin{equation*}
 l^\psi(z_1) := \frac{l(z_1)}{\beta\,\psi(z_1)}\pi_0[\widehat{\psi}]
 \end{equation*}
 is well-defined with $\widehat{\psi}$ given by \R{eq:psi_0}. Then the smoothing
 PDF \R{eq:smoothing-PDF} can be represented as
 \begin{equation} \label{eq:tilted}
 \widehat{\pi}(z_0,z_1) = l^\psi(z_1) \,q^\psi_+(z_1|z_0)\,\pi^\psi_0(z_0) ,
 \end{equation}
where the modified forward transition kernel $q_+^\psi(z_1|z_0)$ is defined 
by \R{eq:q-twisted} and the modified initial PDF by
 \begin{equation*}
 \pi_0^\psi (z_0) := \frac{\widehat{\psi}(z_0)\,\pi_0(z_0)}{\pi_0[\widehat{\psi}]} .
 \end{equation*}
 \end{lemma}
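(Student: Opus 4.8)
The plan is to obtain the identity \R{eq:tilted} by a direct substitution into the definition \R{eq:smoothing-PDF} of the joint smoothing measure, using nothing beyond the defining relation \R{eq:q-twisted} of the twisted kernel. First I would invert \R{eq:q-twisted} to write $q_+(z_1|z_0) = \psi(z_1)^{-1}\,q_+^\psi(z_1|z_0)\,\widehat{\psi}(z_0)$, which is legitimate because $\psi>0$ by hypothesis and $\widehat{\psi}(z_0)$ is non-zero for all $z_0$ (the condition under which the twisted kernel is well-defined, cf.\ \R{eq:psi_0}). Substituting this into \R{eq:smoothing-PDF} gives
\begin{equation*}
\widehat{\pi}(z_0,z_1) = \frac{l(z_1)}{\beta\,\psi(z_1)}\,q_+^\psi(z_1|z_0)\,\widehat{\psi}(z_0)\,\pi_0(z_0).
\end{equation*}

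Next I would regroup the factors according to their dependence on $z_0$ versus $z_1$. The product $\widehat{\psi}(z_0)\,\pi_0(z_0)$ equals $\pi_0[\widehat{\psi}]\,\pi_0^\psi(z_0)$ by the definition of $\pi_0^\psi$ in the statement; pulling the scalar $\pi_0[\widehat{\psi}]$ across to the $z_1$-dependent factors turns $l(z_1)/(\beta\,\psi(z_1))$ into exactly $l^\psi(z_1)$. Collecting the three pieces then yields $\widehat{\pi}(z_0,z_1) = l^\psi(z_1)\,q_+^\psi(z_1|z_0)\,\pi_0^\psi(z_0)$, i.e.\ \R{eq:tilted}. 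The only steps needing a word of justification are that all the quantities in the denominators are non-zero: $\psi>0$ and $\beta=\pi_1[l]>0$ are given, $\widehat{\psi}(z_0)\neq 0$ is the standing assumption making the twisted kernel meaningful, and $\pi_0[\widehat{\psi}]\neq 0$ is precisely the content of the hypothesis that $l^\psi$ (equivalently $\pi_0^\psi$) be well-defined; since $\pi_0$ is the empirical measure \R{eq:initial_pdf}, $\pi_0[\widehat{\psi}] = \frac{1}{M}\sum_i \widehat{\psi}(z_0^i)$ is a genuine positive average.

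There is essentially no obstacle here: the lemma is a reparametrisation identity and the proof is a line of algebra. The only thing to be careful about is bookkeeping — keeping track of which factors depend on $z_0$, which on $z_1$, and which are scalars — and, for rigour, noting that the manipulation is to be read in the sense of measures, since $\pi_0$, $\pi_0^\psi$, and hence $\widehat{\pi}$ are not absolutely continuous with respect to Lebesgue measure. As an optional consistency check I would remark that, because $q_+^\psi$ is a Markov kernel and $\pi_0^\psi$ a probability measure, integrating \R{eq:tilted} over both variables recovers $\int\int \widehat{\pi}(z_0,z_1)\,\dd z_0\,\dd z_1 = 1$, so $l^\psi$ indeed plays the role of a correctly normalised modified likelihood for the twisted model.
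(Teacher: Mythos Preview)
Your proposal is correct and is exactly the approach the paper takes: the paper's proof is the single sentence ``This follows from the definition of the smoothing PDF $\widehat{\pi}(z_0,z_1)$ and the twisted transition kernel $q_+^\psi(z_1|z_0)$,'' and your argument is precisely the spelled-out version of that substitution and regrouping. Your additional remarks on well-definedness and the normalisation check are reasonable embellishments but not required.
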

 
 \begin{proof} This follows from the definition of the smoothing PDF $\widehat{\pi}(z_0,z_1)$ 
 and the twisted transition kernel $q_+^\psi(z_1|z_0)$.
 \end{proof}
 
 \medskip
 
 \begin{remark}
 As mentioned before, the choice \R{eq:r_T} implies $l^\psi = \mbox{const.}$, and leads to the well--known optimal proposal
 density for particle filters. The more general formulation \R{eq:tilted} has recently been explored 
 and expanded by \citeasnoun{sr:GJL17} and \citeasnoun{sr:HBDD18} in order to derive efficient proposal densities for 
 the general smoothing problem \R{eq:Smoothing}.  Within the simplified formulation \R{eq:tilted}, such approaches reduce to a 
 change of measure from $\pi_0$ to $\pi_0^\psi$ at $t_0$ followed by a forward transition according to $q_+^\psi$ and subsequent 
 reweighting by a modified likelihood $l^\psi$  at $t_1$ and hence lead
 to particle filters that combine scenarios (A) and (B) as introduced in Definition \ref{ps:scenarios}. 
 \end{remark}
  
\subsubsection{Gaussian model errors (cont.)}

We return to the discrete-time process \R{eq:discrete_time_Gaussian} and
assume a Gaussian measurement error leading to a Gaussian likelihood
\begin{equation*}
l(z_1) \propto \exp \left(-\frac{1}{2}(Hz_1-y_1)^T R^{-1} (Hz_1-y_1)\right) .
\end{equation*}
We set $\psi_1 = l/\beta$ in order to derive the optimal forward kernel for the associated smoothing/filtering problem. 
Following the discussion from subsection \ref{sec:Gauss1}, this leads to the modified transition kernels
\begin{equation*}
\widehat{q}_+(z_1|z_0^i) := {\rm n}(z_1;\bar z_1^i,\gamma \bar B)
\end{equation*}
with $\bar B$ and $K$ defined by \R{eq:Bz} and \R{eq:Kalman_gain1}, respectively, and
\begin{equation*}
\bar z_1^i := \Psi(z_0^i) - K(H\Psi(z_0^i) - y_1) .
\end{equation*}
The smoothing distribution $\widehat{\pi}_0$ is
given by
\begin{equation*}
\widehat{\pi}_0(z_0) = \frac{1}{M} \sum_{i=1}^M \gamma^i \,\delta(z-z_0^i) 
\end{equation*}
with coefficients 
\begin{equation*}
\gamma^i \propto \exp \left(-\frac{1}{2}(H \Psi(z_0^i)-y_1)^\T (R+\gamma HBH^\T)^{-1} (H \Psi(z_0^i)-y_1)\right) .
\end{equation*}
It is easily checked that, indeed,
\begin{equation*}
\widehat{\pi}_1(z_1) = \int q_+^\psi(z_1|z_0)\,\widehat{\pi}_0(z_0)\,\dd z_0 .
\end{equation*}

The results from this subsection have been used in simplified form in Example \ref{ex:example1} in order to
compute \R{eq:smoothing_example1}.
We also note that a non--optimal, that is, $\psi(z_1) \not= l(z_1)/\beta$, but Gaussian choice for $\psi$ 
leads to a Gaussian $l^\psi$ and the transition kernels $q^\psi_+(z_1|z_0^i)$ in \R{eq:tilted} remain Gaussian as well. 
This is in contrast to the Schr\"odinger problem, which we discuss in the following Section \ref{sec:SP} and which 
leads to forward transition kernels of the form \R{eq:SS_forward_kernel} below.

\subsubsection{SDE models (cont.)}
The likelihood $l(z_1)$ introduces a change of measure over path space $z_{[0,1]}\in {\cal C}$ from
the forecast measure $\mathbb{Q}$ with marginals $\pi_t$ to the smoothing measure $\widehat{\mathbb{P}}$
via the Radon--Nikodym derivative
\begin{equation} \label{eq:RND-SDE}
\frac{\dd \widehat{\mathbb{P}}}{\dd \mathbb{Q}}_{|z_{[0,1]}} = \frac{l(z_1)}{\beta} .
\end{equation}
We let $\widehat{\pi}_t$ denote the marginal distributions of the smoothing measure $\widehat{\mathbb{P}}$ at time $t$. 

\medskip

\begin{lemma} \label{lemma:control_smoothing}
Let $\psi_t$ denote the solution of the backward Kolmogorov equation \R{eq:BKE1} with final
 condition $\psi_1(z) = l(z)/\beta$ at $t=1$. Then the controlled forward SDE
\begin{equation} \label{eq:Forward-Smoothing-SDE}
\dd Z_t^+ =  \left( f_t(Z_t^+)  + \gamma \nabla_z \log \psi_t (Z_t^+) \right) \dd t 
 +  \gamma^{1/2}  \,\dd W_t^+ ,
 \end{equation}
 with $Z_0^+ \sim \widehat{\pi}_0$ at time $t=0$, implies $Z_1^+ \sim \widehat{\pi}_1$ at final time $t=1$.
\end{lemma}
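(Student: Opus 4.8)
The plan is to combine the twisting identity from Lemma \ref{twisting_SDEs} with the path-space change of measure \R{eq:RND-SDE}. Recall that Lemma \ref{twisting_SDEs} tells us that the controlled SDE \R{eq:Forward-Smoothing-SDE}, whose control drift is exactly $u_t(z) = \gamma \nabla_z \log \psi_t(z)$ with $\psi_t$ solving the backward Kolmogorov equation \R{eq:BKE1} with final datum $\psi_1 = l/\beta$, has time-one transition kernel
\begin{equation*}
q_+^\psi(z_1|z_0) = \psi_1(z_1)\,q_+(z_1|z_0)\,\psi_0(z_0)^{-1},
\end{equation*}
where $q_+$ is the transition kernel of the uncontrolled forward SDE \R{eq:Forward-SDE}. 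So the whole statement reduces to checking that if we initialise this twisted kernel at $\widehat{\pi}_0$, we land on $\widehat{\pi}_1$.

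First I would identify the discrete-time twisting machinery of Section \ref{sec:filtering_and_smoothing} with the present continuous-time setting: the function $\psi_1 = l/\beta$ here is exactly the twisting potential \R{eq:r_T}, and $\psi_0 = \widehat{\psi}$ in the notation of \R{eq:backward_iteration}, since $\psi_0(z_0) = \int q_+(z_1|z_0)\psi_1(z_1)\,\dd z_1$ is precisely what the backward Kolmogorov equation computes at $t=0$ (the solution of \R{eq:BKE1} run from $t=1$ to $t=0$ is the conditional expectation of $\psi_1(Z_1^+)$ given $Z_0^+ = z_0$). In particular $\psi_0(z_0^i) = \widehat{\psi}(z_0^i) = \gamma^i$ with $\gamma^i$ as in \R{eq:gamma_i}, so that $q_+^\psi = \widehat{q}_+$ of \R{eq:forward_smoothing_kernel}. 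Then the computation is exactly the second half of the Lemma preceding \R{eq:smoothing-PDF}:
\begin{equation*}
\int q_+^\psi(z_1|z_0)\,\widehat{\pi}_0(z_0)\,\dd z_0 = \frac{1}{M}\sum_{i=1}^M \psi_1(z_1)\,q_+(z_1|z_0^i)\,\gamma^{i\,-1}\,\gamma^i = \frac{l(z_1)}{\beta}\,\frac{1}{M}\sum_{i=1}^M q_+(z_1|z_0^i) = \frac{l(z_1)\,\pi_1(z_1)}{\beta} = \widehat{\pi}_1(z_1),
\end{equation*}
using \R{eq:smoothing_0} for $\widehat{\pi}_0$, \R{eq:prediction} for $\pi_1$, and \R{eq:filter_PDF} for $\widehat{\pi}_1$.

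Alternatively, and perhaps more transparently at the level of path measures, I would phrase it via \R{eq:RND-SDE}: the smoothing measure $\widehat{\mathbb{P}}$ on $\mathcal{C}$ has $\dd\widehat{\mathbb{P}}/\dd\mathbb{Q} = l(z_1)/\beta = \psi_1(z_1)$, and Doob's $h$-transform (the content of Lemma \ref{twisting_SDEs}) says exactly that tilting $\mathbb{Q}$ by the terminal factor $\psi_1(z_1)$ produces the law of the SDE with added drift $\gamma\nabla_z\log\psi_t$, whose time-$t$ marginal is $\widehat{\pi}_t$; evaluating at $t=0$ gives the initial law $\widehat{\pi}_0 = \psi_0\,\pi_0$ and at $t=1$ gives $\widehat{\pi}_1$, which is the claim. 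The only genuine subtlety — and the step I would flag as the main obstacle — is the singularity at $t=0$ already noted in the text before \R{eq:Forward-SDE2}: because $\pi_0$ is the empirical measure \R{eq:initial_pdf}, the conditioning that defines $\psi_t$ and the added drift are fine away from $t=0$, but one should either invoke the regularisation $\pi_0 \approx \frac1M\sum_i {\rm n}(z;z_0^i,\epsilon I)$ with $\epsilon\to 0$ as suggested earlier, or simply observe that the identity $\widehat{\pi}_1 = \int q_+^\psi(\cdot|z_0)\widehat{\pi}_0(z_0)\,\dd z_0$ only ever uses the time-one transition kernel and the (atomic) boundary data, so no regularity of intermediate marginals is actually needed for the stated conclusion. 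I would therefore present the short kernel computation as the proof and add a one-line remark pointing to Lemma \ref{twisting_SDEs} and to the $t\to 0$ regularisation caveat.
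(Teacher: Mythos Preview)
Your proposal is correct and follows essentially the same route as the paper: invoke Lemma~\ref{twisting_SDEs} to identify the time-one kernel of the controlled SDE with the twisted kernel $q_+^\psi = \widehat{q}_+$ of \R{eq:forward_smoothing_kernel} (via $\psi_1 = l/\beta$, $\psi_0 = \widehat{\psi}$), and then use the already-established fact that $\widehat{q}_+$ pushes $\widehat{\pi}_0$ to $\widehat{\pi}_1$. The paper's proof is a two-line pointer to exactly these ingredients; your version simply spells out the kernel computation and adds the (optional) path-measure reading and the $t\to 0$ regularisation remark, neither of which is needed for the stated conclusion but both of which are harmless.
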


\begin{proof}  The lemma is a consequence of Lemma \ref{twisting_SDEs} and 
Definition \R{eq:forward_smoothing_kernel} of the smoothing kernel $\widehat{q}_+(z_1|z_0)$ with
$\psi(z_1) = \psi_1(z_1)$ and $\widehat{\psi}(z_0) = \psi_0(z_0)$.
\end{proof}

\medskip

 \noindent
 The SDE \R{eq:Forward-Smoothing-SDE} is obviously a special case of \R{eq:Forward-SDE2} with control law, $u_t$, given by
 \R{eq:optimal_control_SDE}. Note, however, that the initial distributions for \R{eq:Forward-SDE2} and \R{eq:Forward-Smoothing-SDE} are different.
 We will reconcile this fact in the following subsection by considering the associated Schr\"odinger problem \cite{sr:FG97,sr:L14,sr:CGP14}.  

\medskip

\begin{lemma}
The solution $\psi_t$ of the backward Kolmogorov equation \R{eq:BKE1} with final
condition $\psi_1(z) = l(z)/\beta$ at $t=1$ satisfies 
\begin{equation} \label{eq:density_ratio}
\psi_t(z) = \frac{\widehat{\pi}_t(z)}{\pi_t(z)}
\end{equation}
and the PDFs $\widehat{\pi}_t$ coincide with the marginal PDFs of the backward SDE \R{eq:Backward-SDE} with final condition 
$Z_1^- \sim \widehat{\pi}_1$.
\end{lemma}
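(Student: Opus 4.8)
The plan is to verify the claimed identity $\psi_t(z) = \widehat{\pi}_t(z)/\pi_t(z)$ by checking that the right-hand side satisfies the backward Kolmogorov equation \R{eq:BKE1} with the prescribed final condition, and then to use this to identify the marginals of the backward SDE. First I would recall that $\pi_t$ solves the forward Fokker--Planck equation \R{eq:FPE1}, $\partial_t \pi_t = {\cal L}_t^\dagger \pi_t$, and establish the analogous evolution equation for $\widehat{\pi}_t$. Since $\widehat{\mathbb{P}}$ is obtained from $\mathbb{Q}$ by the change of measure \R{eq:RND-SDE} with density $l(z_1)/\beta$ depending only on the terminal value, the smoothing marginals can be written as $\widehat{\pi}_t(z) = \pi_t(z)\,\mathbb{E}_{\mathbb{Q}}[\,l(Z_1)/\beta \mid Z_t = z\,]$. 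Defining $\phi_t(z) := \mathbb{E}_{\mathbb{Q}}[\,l(Z_1)/\beta \mid Z_t = z\,]$, the tower property plus the Markov property show that $\phi_t(Z_t)$ is a $\mathbb{Q}$-martingale, hence $\phi_t$ solves the backward Kolmogorov equation $\partial_t \phi_t + {\cal L}_t \phi_t = 0$ with $\phi_1 = l/\beta$; this is exactly \R{eq:BKE1}. By uniqueness of solutions to the backward Kolmogorov equation with the given final data, $\psi_t = \phi_t$, which yields \R{eq:density_ratio}.

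Having established \R{eq:density_ratio}, the second assertion follows by a short consistency argument. The backward SDE \R{eq:Backward-SDE} with drift $b_t = f_t - \gamma \nabla_z \log \pi_t$ was derived precisely so that, run with terminal law $\pi_1$, it reproduces the marginals $\pi_t$ of $\mathbb{Q}$ (equation \R{eq:BFP}). The same computation applied with terminal law $\widehat{\pi}_1$ shows that the resulting marginals $\mu_t$ satisfy the time-reversed equation $\partial_t \mu_t = -\nabla_z \cdot (\mu_t (f_t - \gamma \nabla_z \log \pi_t)) - \frac{\gamma}{2}\Delta_z \mu_t$ with $\mu_1 = \widehat{\pi}_1$. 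It then suffices to check that $\widehat{\pi}_t = \psi_t\,\pi_t$ solves this same backward-in-time parabolic equation with the same terminal condition: differentiating the product and substituting $\partial_t \pi_t = {\cal L}_t^\dagger \pi_t$ and $\partial_t \psi_t = -{\cal L}_t \psi_t$, the first-order and diffusion terms recombine, after the standard logarithmic-derivative manipulation already used to pass from \R{eq:FPE1} to \R{eq:BFP}, into exactly the reversed Fokker--Planck form. Uniqueness of the terminal-value problem then gives $\mu_t = \widehat{\pi}_t$.

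I expect the main obstacle to be the bookkeeping in the product-rule computation for $\partial_t(\psi_t \pi_t)$: one must show that $-({\cal L}_t \psi_t)\pi_t + \psi_t ({\cal L}_t^\dagger \pi_t)$ equals $-\nabla_z \cdot (\psi_t \pi_t (f_t - \gamma\nabla_z\log\pi_t)) - \frac{\gamma}{2}\Delta_z(\psi_t\pi_t)$, which requires carefully tracking the cross terms $\nabla_z \psi_t \cdot \nabla_z \pi_t$ that arise from the two Laplacians and matching them against the $\gamma \nabla_z \log\pi_t$ drift correction. A secondary technical point is the regularity and integrability needed to justify (i) the martingale/Feynman--Kac representation $\psi_t = \mathbb{E}_{\mathbb{Q}}[l(Z_1)/\beta \mid Z_t]$ and (ii) uniqueness for the PDE terminal-value problems; under the standing assumptions on $f_t$, constant diffusion $\gamma$, and a bounded positive likelihood $l$ these are standard, so I would invoke them rather than prove them. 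The cleanest write-up simply checks that $\psi_t \pi_t$ and $\widehat{\pi}_t$ satisfy the same well-posed equation and appeals to uniqueness, which is essentially the route the authors indicate.
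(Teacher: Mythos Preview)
Your proposal is correct and follows essentially the same route as the paper: the core step in both is the product-rule computation showing that $\psi_t\pi_t$ satisfies the time-reversed Fokker--Planck equation \R{eq:BFPES}, after which uniqueness identifies it with the marginals of the backward SDE. Your use of the Feynman--Kac/martingale representation $\psi_t(z)=\mathbb{E}_{\mathbb{Q}}[l(Z_1)/\beta\mid Z_t=z]$ to first establish \R{eq:density_ratio} independently is a welcome clarification of a step the paper leaves implicit (it simply asserts the identity at $t=1$ and then differentiates the product), but it does not constitute a different argument.
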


\begin{proof}
We first note that \R{eq:density_ratio} holds at final time $t=1$. Furthermore, equation \R{eq:density_ratio} implies
\begin{equation*}
\partial_t \widehat{\pi}_t = \pi_t \,\partial_t \psi_t - \psi_t \,\partial_t \pi_t .
\end{equation*}
Since $\pi_t$ satisfies the Fokker--Planck equation \R{eq:FPE1} and $\psi_t$ the backward Kolmogorov equation
\R{eq:BKE1}, it follows that
\begin{equation} \label{eq:BFPES}
\partial_t \widehat{\pi}_t = -\nabla_z \cdot (\widehat{\pi}_t (f - \gamma \nabla_z \log \pi_t)) - \frac{\gamma}{2} \Delta_z \widehat{\pi}_t ,
\end{equation}
which corresponds to the Fokker--Planck equation \R{eq:BFP} of the backward SDE \R{eq:Backward-SDE} with final condition 
$Z_1^- \sim \widehat{\pi}_1$ and marginal PDFs denoted by $\widehat{\pi}_t$ instead of $\pi_t$.
\end{proof}

\medskip

\noindent
Note that \R{eq:BFPES} is equivalent to
\begin{equation*}
\partial \widehat{\pi}_t = -\nabla_z \cdot (\widehat{\pi}_t (f_t - \gamma \nabla_z \log \pi_t + \gamma \nabla_z \log \widehat{\pi}_t) )
 + \frac{\gamma}{2} \Delta_z \widehat{\pi}_t ,
 \end{equation*}
 which in turn is equivalent to the Fokker--Planck equation of the forward smoothing SDE 
 \R{eq:Forward-Smoothing-SDE} since $\psi_t = \widehat{\pi}_t/\pi_t$. 
 
 We conclude from the previous lemma 
 that one can either solve the backward Kolmogorov
 equation \R{eq:BKE1} with $\psi_1(z) = l(z)/\beta$ or the backward SDE \R{eq:Backward-SDE}
with $Z_1^- \sim \widehat{\pi}_1 = l\,\pi_1/\beta$ in order to derive the desired control law $u_t (z) = \gamma
\nabla_z \log \psi_t$ in \R{eq:Forward-Smoothing-SDE}.
 
\medskip

\begin{remark} \label{rem:fbsde}
The notion of a backward SDE used throughout this paper is different from the notion of a backward SDE 
in the sense of \citeasnoun{sr:C16}, for example. More specifically, It\^o's formula 
\begin{equation*}
\dd \psi_t = \partial_t \psi_t\, \dd t + \nabla_z \psi_t \cdot \,\dd Z_t^+ + \frac{\gamma}{2}\Delta_z \psi_t  \,\dd t
\end{equation*}
and the fact that $\psi_t$ satisfies the backward Kolmogorov equation \R{eq:BKE1} imply that
\begin{equation} \label{eq:material_advection}
\dd \psi_t = \gamma^{1/2}\, \nabla_z \psi_t \cdot \dd W_t^+
\end{equation}
along solutions of the forward SDE \R{eq:Forward-SDE}. In other words, the quantities $\psi_t$ are materially advected
along solutions $Z_t^+$ of the forward SDEs in expectation or, in the language of stochastic analysis, $\psi_t$ is a martingale. Hence, by
the martingale representation theorem, we can reformulate the problem of determining $\psi_t$ as follows. Find the
solution $(Y_t,V_t)$ of the backward SDE  
\begin{equation} \label{eq:BSDE}
\dd Y_t = V_t \cdot \dd W_t^+
\end{equation}
subject to the final condition $Y_1 = l/\beta$ at $t=1$. Here \R{eq:BSDE} has to be understood as a backward SDE in the sense of \citeasnoun{sr:C16},
for example, where the solution $(Y_t,V_t)$ is adapted to the forward SDE \R{eq:Forward-SDE}, that is, to the past $s\le t$, whereas the solution $Z_t^-$  of the backward SDE  \R{eq:Backward-SDE} is adapted to the future $s\ge t$. The solution of \R{eq:BSDE} is given by $Y_t = \psi_t$ and $
V_t = \gamma^{1/2}\,\nabla_z \psi_t$ in agreement with \R{eq:material_advection} (compare \citeasnoun{sr:C16} page 42). See 
Appendix D for the numerical treatment of \R{eq:BSDE}.
\end{remark}

\medskip
 
\noindent
A variational characterisation of the smoothing path measure $\widehat{\mathbb{P}}$ is given by the Donsker--Varadhan principle
\begin{equation} \label{eq:DV_principle}
\widehat{\mathbb{P}} =  \arg \inf_{\mathbb{P}\ll \mathbb{Q}} \left\{ -\mathbb{P}[\log(l)] + {\rm KL}(\mathbb{P}||\mathbb{Q}) \right\} ,
\end{equation}
that is, the distribution $\widehat{\mathbb{P}}$ is chosen such that the expected loss, $-\mathbb{P}[\log(l)]$, 
is minimised subject to the penalty introduced by the Kullback--Leibler divergence with respect to the original 
path measure $\mathbb{Q}$. Note that
\begin{equation} \label{eq:free_energy}
\inf_{\mathbb{P}\ll \mathbb{Q}}   \left\{\mathbb{P}[-\log(l)] + {\rm KL}(\mathbb{P}||\mathbb{Q}) \right\} = -\log \beta
\end{equation}
with $\beta = \mathbb{Q}[l]$. 
The connection between smoothing for SDEs and the Donsker--Varadhan principle has, for example, been discussed by 
\citeasnoun{sr:MN03}. See also \citeasnoun{sr:HRSZ17} for an in-depth discussion of variational formulations and 
their numerical implementation in the context of rare event simulations for which it is generally assumed that 
$\pi_0(z) = \delta(z-z_0)$ in \R{eq:initial_pdf},  that is, the ensemble size is $M=1$ when viewed within the  context of this paper.

\medskip

\begin{remark}
One can choose $\psi_t$ differently from the choice made in \R{eq:density_ratio}  
by changing the final condition for the backward Kolmogorov equation \R{eq:BKE1} to any suitable $\psi_1$. 
As already discussed for twisted discrete--time smoothing, such modifications give rise to alternative representations of 
the smoothing distribution $\widehat{\mathbb{P}}$ 
in terms of modified forward SDEs, likelihood functions and initial distributions. See  \citeasnoun{sr:KR16} and \citeasnoun{sr:RK17} 
for an application of these ideas to importance sampling in the context of partially observed diffusion processes. 
More specifically, let $u_t$ denote the associated control law \R{eq:optimal_control_SDE} 
for the forward SDE \R{eq:Forward-SDE2} with given initial distribution
$Z_0^+ \sim q_0$. Then
\begin{equation*}
\frac{\dd \widehat{\mathbb{P}}}{\dd \mathbb{Q}}_{|z^u_{[0,1]}} = \frac{\dd \widehat{\mathbb{P}}}{\dd \mathbb{Q}^u}_{|z_{[0,1]}^u}\,
\frac{\dd \mathbb{Q}^u}{\dd \mathbb{Q}}_{|z_{[0,1]}^u} ,
\end{equation*}
which, using \R{eq:RND} and \R{eq:RND-SDE}, implies the modified Radon--Nikodym derivative
\begin{equation} \label{eq:modified_likelihood}
\frac{\dd \widehat{\mathbb{P}}}{\dd \mathbb{Q}^u}_{|z_{[0,1]}^u} =  \frac{l(z_1^u)}{\beta} \frac{\pi_0(z_0^u)}{q_0(z_0^u)} 
\exp \left(-\frac{1}{2\gamma} \int_0^1 \left( \|u_t\|^2\,\dd t +
2 \gamma^{1/2} u_t \cdot \dd W_t^+ \right)\right) .
\end{equation}
Recall from lemma \ref{lemma:control_smoothing} that the control law \R{eq:optimal_control_SDE} with $\psi_t$ defined by
\R{eq:density_ratio} together with $q_0 = \widehat{\pi}_0$ 
leads to $\widehat{\mathbb{P}} = \mathbb{Q}^u$.
\end{remark}

%
\subsection{Schr\"odinger Problem} \label{sec:SP}
%

In this subsection, we show that scenario (C) from definition \ref{ps:scenarios} leads to a certain boundary value problem
first considered by \citeasnoun{sr:S31}. More specifically, we state the, so-called, Schr\"odinger problem and 
show how it is linked to data assimilation scenario (C). 

In order to introduce the Schr\"odinger problem, we return to the twisting potential approach as utilised in section \ref{sec:filtering_and_smoothing}
with two important modifications. These modifications are, first, that the twisting potential $\psi$ is determined implicitly and, second, that the
modified transition kernel $q_+^\psi$ is applied to $\pi_0$ instead of the tilted initial density $\pi_0^\psi$ as in
\R{eq:tilted}. More specifically, we have the following.

\medskip

\begin{definition}
We seek the pair of functions $\widehat{\psi}(z_0)$ and $\psi(z_1)$ which solve the boundary value problem
\begin{align} \label{eq:SS1a}
\pi_0(z_0) &= \pi^\psi_0(z_0)\,\widehat{\psi}(z_0) , \\ \label{eq:SS1b}
\widehat{\pi}_1(z_1) &= \pi^\psi_1(z_1) \, \psi(z_1) , \\ \label{eq:SS1c}
\pi^\psi_1(z_1) &= \int q_+(z_1|z_0)\,\pi^\psi_0(z_0)\,\dd z_0 ,\\
\widehat{\psi}(z_0) &= \int q_+(z_1|z_0)\,\psi(z_1)\,\dd z_1 , \label{eq:SS1d}
\end{align}
for given marginal (filtering) distributions $\pi_0$ and $\widehat{\pi}_1$ at $t=0$ and $t=1$, respectively. 
The required modified PDFs $\pi_0^\psi$ and $\pi_1^\psi$ are defined by \R{eq:SS1a} and \R{eq:SS1b}, respectively.
The solution $(\widehat{\psi},\psi)$ of the Schr\"odinger system \R{eq:SS1a}--\R{eq:SS1d} leads to the modified transition 
kernel
\begin{equation} \label{eq:SS_kernel}
q^\ast_+(z_1|z_0) := \psi(z_1) \,q_+(z_1|z_0)\, \widehat{\psi}(z_0)^{-1} ,
\end{equation}
which satisfies
\begin{equation*}
\widehat{\pi}_1(z_1) = \int q_+^\ast (z_1|z_0)\,\pi_0(z_0)\,\dd z_0
\end{equation*}
by construction.
\end{definition}

\medskip

\noindent
The modified transition kernel $q_+^\ast(z_1|z_0)$ couples the two marginal distributions $\pi_0$ and $\widehat{\pi}_1$ 
with the twisting potential $\psi$ implicitly defined. In other words, $q_+^\ast$ provides the transition kernel for going from the initial
distribution \R{eq:initial_pdf} at time $t_0$ to the filtering distribution at time $t_1$ without the need for any reweighting, that is,
the desired transition kernel for scenario (C). See \citeasnoun{sr:L14} and \citeasnoun{sr:CGP14} for more mathematical details on the 
Schr\"odinger problem.

\medskip

\begin{remark}
Let us compare the Schr\"odinger system to the twisting potential approach \R{eq:tilted} for the smoothing problem from
subsection \ref{sec:filtering_and_smoothing} in some more detail. First, note that the twisting potential approach to smoothing 
replaces \R{eq:SS1a} with
\begin{equation*}
\pi^\psi_0(z_0) = \pi_0(z_0) \,\widehat{\psi}(z_0)
\end{equation*}
and \R{eq:SS1b} with
\begin{equation*}
\pi^\psi_1(z_1) = \pi_1(z_1) \,\psi(z_1),
\end{equation*}
where $\psi$ is 
a given twisting potential normalised such that $\pi_1[\psi] = 1$. The associated $\widehat{\psi}$ is determined by \R{eq:SS1d} as in the twisting approach.
In both cases, the modified transition kernel is given by \R{eq:SS_kernel}. Finally, \R{eq:SS1c} is replaced by the 
prediction step \R{eq:prediction}.
\end{remark} 

\medskip

\noindent
In order to solve the Schr\"odinger system for our given initial distribution \R{eq:initial_pdf} 
and the associated filter distribution $\widehat{\pi}_1$, we make the {\it ansatz}
\begin{equation*}
\pi^\psi_0(z_0) = \frac{1}{M} \sum_{i=1}^M \alpha^i \,\delta(z_0-z_0^i), \qquad \sum_{i=1}^M \alpha^i = M .
\end{equation*}
This {\it ansatz} together with \R{eq:SS1a}--\R{eq:SS1d} immediately implies
\begin{equation*}
\widehat{\psi}(z_0^i) = \frac{1}{\alpha^i}\,, \qquad
\pi^\psi_1(z_1) = \frac{1}{M} \sum_{i=1}^M \alpha^i \,q_+(z_1|z_0^i) ,
\end{equation*}
as well as
\begin{equation} \label{eq:Schroedinger_twist}
\psi(z_1) = \frac{\widehat{\pi}_1(z_1)}{\pi^\psi_1(z_1)} =\frac{l(z_1)}{\beta} \frac{\Frac{1}{M} 
\sum_{j=1}^M q_+ (z_1|z_0^j)}{\Frac{1}{M} \sum_{j=1}^M \alpha^j \,
q_+ (z_1|z_0^j)} .
\end{equation}
Hence we arrive at the equations
\begin{align} \label{eq:Sinkhorn1a}
\widehat{\psi}(z_0^i) &= \frac{1}{\alpha^i}\\ \nonumber
&= \int \psi(z_1)\,q_+(z_1|z_0^i)\,\dd z_1\\
&= \int  \frac{l(z_1)}{\beta} \frac{\sum_{j=1}^M q_+(z_1|z_0^j)}{\sum_{j=1}^M \alpha^j \,
q_+ (z_1|z_0^j)} \,q_+(z_1|z_0^i)\,\dd z_1\label{eq:Sinkhorn1c}
\end{align}
for $i=1,\ldots,M$. These $M$ equations have to be solved for the $M$ unknown coefficients $\{\alpha^i\}$. 
In other words, the Schr\"odinger problem becomes finite-dimensional in the context of this paper.
More specifically, we have the following result.

\medskip

\begin{lemma}
The forward Schr\"odinger transition kernel
\R{eq:SS_kernel} is  given by
\begin{align} \nonumber
q_+^\ast (z_1|z_0^i) &=  \frac{\widehat{\pi}_1(z_1)}{\pi_1^\psi(z_1)} q_+(z_1|z_0^i) \,\alpha^i\\ 
&=   \frac{ \alpha^i \sum_{j=1}^M  \,q_+ (z_1|z_0^j)}{\sum_{j=1}^M \alpha^j \,
q_+ (z_1|z_0^j)} \,\frac{l(z_1)}{\beta}\,q_+(z_1|z_0^i) , \label{eq:SS_forward_kernel}
\end{align}
for each particle, $z_0^i$, with the coefficients $\alpha^j$, $j=1,\ldots,M$, defined by 
\R{eq:Sinkhorn1a}--\R{eq:Sinkhorn1c}.
\end{lemma}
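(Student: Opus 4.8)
The plan is to start from the definition \R{eq:SS_kernel} of the Schr\"odinger transition kernel, $q_+^\ast(z_1|z_0^i) = \psi(z_1)\,q_+(z_1|z_0^i)\,\widehat{\psi}(z_0^i)^{-1}$, and simply substitute the expressions for $\psi$ and $\widehat{\psi}$ that the \textit{ansatz} has already forced. Since $\widehat{\psi}(z_0^i) = 1/\alpha^i$, the factor $\widehat{\psi}(z_0^i)^{-1}$ is just $\alpha^i$. For $\psi(z_1)$ we use equation \R{eq:Schroedinger_twist}, which was derived from \R{eq:SS1b} together with the \textit{ansatz}: $\psi(z_1) = \widehat{\pi}_1(z_1)/\pi_1^\psi(z_1) = (l(z_1)/\beta)\cdot\bigl(\sum_j q_+(z_1|z_0^j)\bigr)\bigl/\bigl(\sum_j \alpha^j q_+(z_1|z_0^j)\bigr)$. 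Plugging these two in gives both displayed lines of \R{eq:SS_forward_kernel} directly: the first line from writing $\psi(z_1) = \widehat{\pi}_1(z_1)/\pi_1^\psi(z_1)$ and the second from expanding $\widehat{\pi}_1 = l\,\pi_1/\beta$ with $\pi_1 = \frac1M\sum_j q_+(\cdot|z_0^j)$ and $\pi_1^\psi = \frac1M\sum_j \alpha^j q_+(\cdot|z_0^j)$ (the factors of $1/M$ cancel between numerator and denominator).

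The second thing to verify is that this $q_+^\ast$ genuinely is a Markov transition kernel for each $i$, i.e. that it integrates to one in $z_1$, and that $\frac1M\sum_i q_+^\ast(z_1|z_0^i) = \widehat{\pi}_1(z_1)$. The normalisation in $z_1$ is exactly the content of equation \R{eq:Sinkhorn1c}: integrating the second line of \R{eq:SS_forward_kernel} against $\dd z_1$ and using $\alpha^i\int \psi(z_1)q_+(z_1|z_0^i)\dd z_1 = \alpha^i\widehat\psi(z_0^i) = \alpha^i/\alpha^i = 1$, which is precisely \R{eq:Sinkhorn1a}--\R{eq:Sinkhorn1c}. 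For the mixture identity, I would sum $q_+^\ast(z_1|z_0^i) = \psi(z_1)q_+(z_1|z_0^i)\alpha^i$ over $i$ and divide by $M$, obtaining $\psi(z_1)\cdot\frac1M\sum_i \alpha^i q_+(z_1|z_0^i) = \psi(z_1)\,\pi_1^\psi(z_1) = \widehat{\pi}_1(z_1)$ by \R{eq:SS1b}. So both boundary conditions of the Schr\"odinger system are satisfied by construction, which is really what the lemma is asserting once the kernel is written in closed form.

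I do not expect a genuine obstacle here: the lemma is essentially bookkeeping, unfolding definitions that were set up in the paragraphs immediately preceding it. The only point requiring a word of care is the consistency/existence question — namely that the system \R{eq:Sinkhorn1a}--\R{eq:Sinkhorn1c} of $M$ equations in the $M$ unknowns $\{\alpha^i\}$ actually has a (unique, positive) solution, so that the \textit{ansatz} is not vacuous. That is the classical Schr\"odinger/Sinkhorn existence statement, and I would either cite \citeasnoun{sr:L14} or \citeasnoun{sr:CGP14} for it, or defer it to the numerical section where the Sinkhorn iteration is discussed, rather than prove it inside this lemma. Modulo that, the proof is a two-line substitution plus the integral check against \R{eq:Sinkhorn1c}.

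\begin{proof}
Using the \textit{ansatz} $\pi_0^\psi(z_0) = \frac1M\sum_i \alpha^i\delta(z_0-z_0^i)$ in the Schr\"odinger system \R{eq:SS1a}--\R{eq:SS1d} gives, as already noted, $\widehat{\psi}(z_0^i) = 1/\alpha^i$, $\pi_1^\psi(z_1) = \frac1M\sum_j \alpha^j q_+(z_1|z_0^j)$, and hence, by \R{eq:SS1b},
\begin{equation*}
\psi(z_1) = \frac{\widehat{\pi}_1(z_1)}{\pi_1^\psi(z_1)} = \frac{l(z_1)}{\beta}\,\frac{\Frac1M\sum_{j=1}^M q_+(z_1|z_0^j)}{\Frac1M\sum_{j=1}^M \alpha^j q_+(z_1|z_0^j)}.
\end{equation*}
Substituting these into the definition \R{eq:SS_kernel} of $q_+^\ast$ yields
\begin{equation*}
q_+^\ast(z_1|z_0^i) = \psi(z_1)\,q_+(z_1|z_0^i)\,\widehat{\psi}(z_0^i)^{-1} = \frac{\widehat{\pi}_1(z_1)}{\pi_1^\psi(z_1)}\,q_+(z_1|z_0^i)\,\alpha^i,
\end{equation*}
which is the first line of \R{eq:SS_forward_kernel}. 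Expanding $\widehat{\pi}_1 = l\,\pi_1/\beta$ with $\pi_1(z_1) = \frac1M\sum_j q_+(z_1|z_0^j)$ and cancelling the common factor $1/M$ against the corresponding factor in $\pi_1^\psi$ gives the second line of \R{eq:SS_forward_kernel}. Finally, the coefficients $\{\alpha^i\}$ are exactly those for which \R{eq:Sinkhorn1a}--\R{eq:Sinkhorn1c} hold, i.e. $\int \psi(z_1)\,q_+(z_1|z_0^i)\,\dd z_1 = 1/\alpha^i$, so that $\int q_+^\ast(z_1|z_0^i)\,\dd z_1 = \alpha^i\int \psi(z_1)q_+(z_1|z_0^i)\dd z_1 = 1$ and $\frac1M\sum_i q_+^\ast(z_1|z_0^i) = \psi(z_1)\,\pi_1^\psi(z_1) = \widehat{\pi}_1(z_1)$, as required.
\end{proof}
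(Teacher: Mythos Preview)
Your proposal is correct and essentially matches the paper's own proof: the paper verifies exactly the two properties you check at the end, namely $\int q_+^\ast(z_1|z_0^i)\,\dd z_1 = 1$ via \R{eq:Sinkhorn1a}--\R{eq:Sinkhorn1c} and $\frac{1}{M}\sum_i q_+^\ast(z_1|z_0^i) = \widehat{\pi}_1(z_1)$ by cancellation against $\pi_1^\psi$. Your write-up is slightly more explicit in that you first derive the displayed formula by direct substitution of $\psi$ and $\widehat{\psi}$ into \R{eq:SS_kernel} before verifying the two boundary conditions, whereas the paper takes the formula as given and only records the verification; but this is a presentational rather than a mathematical difference.
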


\begin{proof}
Because of \R{eq:Sinkhorn1a}--\R{eq:Sinkhorn1c}, the forward transition kernels \R{eq:SS_forward_kernel}
satisfy
\begin{equation} \label{eq:SS_cond1}
\int q_+^\ast (z_1|z_0^i)\,\dd z_1 = 1 
\end{equation}
for all $i = 1,\ldots,M$ and
\begin{align} \nonumber
\int q_+^\ast (z_1|z_0) \,\pi_0(z_0)\,\dd z &=
\frac{1}{M}\sum_{i=1}^M q_+^\ast (z_1|z_0^i) \\ \nonumber
&= \frac{1}{M} \sum_{i=1}^M  \alpha^i  q_+(z_1|z_0^i)\,
 \frac{\widehat{\pi}_1(z_1)}{\Frac{1}{M} \sum_{j=1}^M \alpha^j \,
q_+ (z_1|z_0^j)}\\
& = \widehat{\pi}_1(z_1) , \label{eq:SS_cond2}
\end{align}
as desired. 
\end{proof}

\medskip

\noindent
Numerical implementations will be discussed in Section \ref{sec:numerics}. Note that 
knowledge of the normalising constant $\beta$ is not required {\it a priori} for solving
\R{eq:Sinkhorn1a}--\R{eq:Sinkhorn1c} since it appears as a common scaling factor.

We note that the coefficients $\{\alpha^j\}$ together with the associated potential $\psi$ from the Schr\"odinger system 
provide the optimally twisted prediction kernel \R{eq:q-twisted} with respect to the filtering distribution $\widehat{\pi}_1$, 
that is, we set  $\widehat{\psi}(z_0^i) = 1/\alpha^i$ in \R{eq:twisted-predicted-PDF} and define the potential $\psi$ by
\R{eq:Schroedinger_twist}.  

\medskip

\begin{lemma} The Schr\"odinger transition kernel \R{eq:SS_kernel} satisfies the following constrained variational
principle. Consider the joint PDFs given by $\pi(z_0,z_1) := q_+(z_1|z_0)\pi_0(z_0)$ and $\pi^\ast (z_0,z_1) := q_+^\ast(z_1|z_0)
\pi_0(z_0)$. Then
\begin{equation} \label{eq:SVP}
\pi^\ast = \arg \inf_{\widetilde{\pi} \in \Pi_{\rm S}} \mbox{KL}(\widetilde{\pi}||\pi) .
\end{equation}
Here a joint PDF $\widetilde{\pi}(z_0,z_1)$ is an element of $\Pi_{\rm S}$ if
\begin{equation*}
\int \widetilde{\pi}(z_0,z_1)\,\dd z_1 = \pi_0(z_0), \qquad \int \widetilde{\pi}(z_0,z_1)\,\dd z_0 = \widehat{\pi}_1(z_1) .
\end{equation*}
\end{lemma}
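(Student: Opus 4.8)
The plan is to verify the optimality of $\pi^\ast$ among all couplings in $\Pi_{\rm S}$ by a direct Kullback--Leibler computation, exploiting the product structure of $\pi^\ast$ inherited from the twisted kernel \R{eq:SS_kernel}. First I would record that any $\widetilde{\pi} \in \Pi_{\rm S}$ has the same marginals as $\pi^\ast$, namely $\pi_0$ at $t=0$ and $\widehat{\pi}_1$ at $t=1$; since $\pi_0$ is the empirical measure \R{eq:initial_pdf}, $\widetilde{\pi}$ is automatically supported on $\{z_0^i\}\times \mathbb{R}^{N_z}$, and I can write $\widetilde{\pi}(z_0,z_1) = \frac{1}{M}\sum_i \widetilde{q}_+(z_1|z_0^i)\,\delta(z_0-z_0^i)$ for some family of transition kernels $\widetilde{q}_+(\cdot|z_0^i)$. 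Then $\mbox{KL}(\widetilde{\pi}||\pi)$ reduces to $\frac{1}{M}\sum_i \int \widetilde{q}_+(z_1|z_0^i)\log\frac{\widetilde{q}_+(z_1|z_0^i)}{q_+(z_1|z_0^i)}\,\dd z_1$, since the $\delta$-part and the common factor $1/M$ cancel in the log-ratio.

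Next I would use the defining relation $q_+^\ast(z_1|z_0^i) = \psi(z_1)\,q_+(z_1|z_0^i)\,\widehat\psi(z_0^i)^{-1}$ from \R{eq:SS_kernel}, so that $\log\frac{q_+(z_1|z_0^i)}{q_+^\ast(z_1|z_0^i)} = \log\widehat\psi(z_0^i) - \log\psi(z_1)$. Substituting this into the chain-rule decomposition $\log\frac{\widetilde{q}_+}{q_+} = \log\frac{\widetilde{q}_+}{q_+^\ast} + \log\frac{q_+^\ast}{q_+}$ gives
\begin{equation*}
\mbox{KL}(\widetilde\pi||\pi) = \mbox{KL}(\widetilde\pi||\pi^\ast) + \frac{1}{M}\sum_{i=1}^M \int \widetilde q_+(z_1|z_0^i)\Big(\log\psi(z_1) - \log\widehat\psi(z_0^i)\Big)\dd z_1 .
\end{equation*}
The crucial point is that the second term depends on $\widetilde\pi$ only through its marginals: $\int \widetilde q_+(z_1|z_0^i)\log\widehat\psi(z_0^i)\,\dd z_1 = \log\widehat\psi(z_0^i)$ because $\widetilde q_+(\cdot|z_0^i)$ integrates to one, and $\frac{1}{M}\sum_i\int \widetilde q_+(z_1|z_0^i)\log\psi(z_1)\,\dd z_1 = \int \log\psi(z_1)\,\widehat\pi_1(z_1)\,\dd z_1$ because the $t=1$ marginal of $\widetilde\pi$ is $\widehat\pi_1$. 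Hence the second term is a constant, call it $C$, the same for every $\widetilde\pi \in \Pi_{\rm S}$ (in particular for $\pi^\ast$ itself, which forces $C = -\mbox{KL}(\pi^\ast||\pi^\ast) + \mbox{KL}(\pi^\ast||\pi) = \mbox{KL}(\pi^\ast||\pi)$, so the bookkeeping is consistent). Since $\mbox{KL}(\widetilde\pi||\pi^\ast)\ge 0$ with equality iff $\widetilde\pi = \pi^\ast$, we conclude $\mbox{KL}(\widetilde\pi||\pi)\ge C = \mbox{KL}(\pi^\ast||\pi)$, which is exactly \R{eq:SVP}.

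The main obstacle is justifying the measure-theoretic manipulations when $\pi_0$ and $\pi^\ast$ are not absolutely continuous with respect to Lebesgue measure (they carry a $\delta$-factor in $z_0$): one must check that $\widetilde\pi \ll \pi$ and $\widetilde\pi \ll \pi^\ast$ reduce cleanly to the absolute continuity of the conditional kernels $\widetilde q_+(\cdot|z_0^i)$ with respect to $q_+(\cdot|z_0^i)$, and that the disintegration of KL along the fixed $z_0$-marginal is valid — this is the standard chain rule for relative entropy, but it should be invoked carefully given the singular marginal. One also needs $\widehat\psi(z_0^i) = 1/\alpha^i > 0$ and $\psi(z_1) > 0$ finite $\widehat\pi_1$-a.e., which follow from the ansatz solving \R{eq:Sinkhorn1a}--\R{eq:Sinkhorn1c}, together with finiteness of the constant $C$ (equivalently, finiteness of $\mbox{KL}(\pi^\ast||\pi)$), which I would assume as a standing hypothesis in the spirit of the finiteness assumptions already made for path-space KL divergences earlier in the paper.
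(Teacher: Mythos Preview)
Your argument is correct and is precisely the standard Pythagorean-identity proof for Schr\"odinger bridges: the key observation that $\log(q_+^\ast/q_+)(z_0,z_1)=\log\psi(z_1)-\log\widehat\psi(z_0)$ is additively separable, so its integral against any $\widetilde\pi\in\Pi_{\rm S}$ depends only on the fixed marginals, yielding $\mbox{KL}(\widetilde\pi\|\pi)=\mbox{KL}(\widetilde\pi\|\pi^\ast)+\mbox{KL}(\pi^\ast\|\pi)$ and hence the claimed minimality. The paper does not actually supply a proof here; it cites F\"ollmer--Gantert and points to Remark~\ref{rem:Var_S} for a discrete heuristic. That remark assumes the product ansatz $p_{lj}=u_l q_{lj}/v_j$ and observes that $\mbox{KL}(P\|Q)$ then reduces to a functional of $(u,v)$ determined by the marginal constraints --- which is the same separability idea, but run in the opposite direction (restricting to the product form rather than proving it is optimal). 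Your decomposition is the cleaner route: it shows directly that $\pi^\ast$ beats \emph{every} competitor in $\Pi_{\rm S}$, not just those of product form, and it makes the equality case $\widetilde\pi=\pi^\ast$ explicit.

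Your caveats about the singular $z_0$-marginal are appropriate and could be stated slightly more sharply: since every $\widetilde\pi\in\Pi_{\rm S}$ shares the atomic marginal $\pi_0$, the chain rule for relative entropy gives $\mbox{KL}(\widetilde\pi\|\pi)=\mbox{KL}(\pi_0\|\pi_0)+\frac{1}{M}\sum_i\mbox{KL}\big(\widetilde q_+(\cdot|z_0^i)\,\big\|\,q_+(\cdot|z_0^i)\big)$, with the first term vanishing; this disposes of the disintegration issue without further fuss. Finiteness of $C=\mbox{KL}(\pi^\ast\|\pi)$ is indeed needed and follows from the positivity of the $\alpha^i$ and integrability of $\log\psi$ against $\widehat\pi_1$, which are implicit in the well-posedness of the Schr\"odinger system \R{eq:Sinkhorn1a}--\R{eq:Sinkhorn1c}.
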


\medskip

\begin{proof} See \citeasnoun{sr:FG97} for a proof and Remark \ref{rem:Var_S} for a heuristic derivation in the
case of discrete measures.
\end{proof}

\medskip

\noindent
The constrained variational formulation \R{eq:SVP} of Schr\"odinger's problem should be compared to the unconstrained 
Donsker--Varadhan variational principle 
\begin{equation}
\widehat{\pi} = \arg \inf \left\{ - \widetilde{\pi}[\log(l)] + \mbox{KL}(\widetilde{\pi}||\pi) \right\}
\end{equation}
for the associated smoothing problem. See Remark \ref{rem:DVP-SVP} below.

\medskip

\begin{remark} The Schr\"odinger problem is closely linked to optimal transportation
\cite{sr:cuturi13,sr:L14,sr:CGP14}. For example, consider the Gaussian
transition kernel \R{eq:Gaussian_kernel} with $\Psi(z) = z$ and $B=I$. Then the solution \R{eq:SS_forward_kernel} 
to the associated Schr\"odinger problem of coupling $\pi_0$ and $\widehat{\pi}_1$ reduces to the solution 
$\pi^\ast$ of the associated optimal transport problem 
\begin{equation*} 
\pi^\ast = \arg \inf_{\widetilde{\pi} \in \Pi_{\rm S}} \int \int \|z_0-z_1\|^2\, \widetilde{\pi}(z_0,z_1)\,\dd z_0\,\dd z_1
\end{equation*}
in the limit $\gamma \to 0$. 
\end{remark}

\subsubsection{SDE models (cont.)}

At the SDE level, Schr\"odinger's problem amounts to  continuously bridging the given initial PDF $\pi_0$ with the
PDF $\widehat{\pi}_1$ at final time using an appropriate modification of the
stochastic process $Z_{[0,1]}^+ \sim \mathbb{Q}$ defined by the forward SDE 
(\ref{eq:Forward-SDE}) with initial distribution $\pi_0$ at $t=0$. The desired modified
stochastic process $\mathbb{P}^\ast$ is defined as the minimiser of
\begin{equation*}
{\cal L}(\widetilde{\mathbb{P}}) := \mbox{KL}(\widetilde{\mathbb{P}}||\mathbb{Q})
\end{equation*}
subject to the constraint that the marginal distributions $\widetilde{\pi}_t$ of $\widetilde{\mathbb{P}}$ at
time $t=0$ and $t=1$ satisfy $\pi_0$ and $\widehat{\pi}_{1}$, 
respectively \cite{sr:FG97,sr:L14,sr:CGP14}. 

\medskip

\begin{remark} \label{rem:DVP-SVP}
We note that the Donsker--Varadhan variational principle \R{eq:DV_principle}, characterising the smoothing path 
measure $\widehat{\mathbb{P}}$, can be 
replaced by
\begin{equation*}
\mathbb{P}^\ast = \arg \inf_{\widetilde{\mathbb{P}}\in \Pi} \left\{-\widetilde{\pi}_1[\log (l)] + 
{\rm KL}(\widetilde{\mathbb{P}}||\mathbb{Q}) \right\}
\end{equation*}
with
\begin{equation*}
\Pi = \{\widetilde{\mathbb{P}} \ll \mathbb{Q}: \,\widetilde{\pi}_1 = \widehat{\pi}_1,\,\widetilde{\pi}_0 = \pi_0\}
\end{equation*}
in the context of Schr\"odinger's problem. The associated 
\begin{equation*}
- \log \beta^\ast :=  \inf_{\widetilde{\mathbb{P}}\in \Pi} \left\{ - \widetilde{\pi}_1[\log(l)] +
{\rm KL}(\widetilde{\mathbb{P}}||\mathbb{Q}) \right\}
 = - \widehat{\pi}[\log(l)] + {\rm KL}(\mathbb{P}^\ast||\mathbb{Q}) 
\end{equation*}
can be viewed as a generalisation of \R{eq:free_energy} and gives rise to a generalised evidence $\beta^\ast$, 
which could be used for model comparison and parameter estimation.
\end{remark}

\medskip

\noindent
The Schr\"odinger process $\mathbb{P}^\ast$ corresponds to a 
Markovian process across the whole time domain $[0,1]$ \cite{sr:L14,sr:CGP14}. More specifically,
consider the controlled forward SDE \R{eq:Forward-SDE2}
with initial conditions
\begin{equation*}
Z_0^+ \sim \pi_{0}
\end{equation*}
and a given control law $u_t$ for $t\in [0,1]$. Let $\mathbb{P}^u$ denote the path measure associated to this process.
Then, as detailed discussed in detail by \citeasnoun{sr:DP91}, one can find time-dependent potentials $\psi_t$ with associated control laws
\R{eq:optimal_control_SDE} such that 
the marginal of the associated path measure $\mathbb{P}^u$ at times $t=1$  satisfies
\begin{equation*}
\pi^u_1 = \widehat{\pi}_1
\end{equation*}
and, more generally,
\begin{equation*}
\mathbb{P}^\ast = \mathbb{P}^u .
\end{equation*}
We summarise this result in the following lemma.

\medskip

\begin{lemma}
The Schr\"odinger path measure $\mathbb{P}^\ast$ can be generated by a controlled SDE \R{eq:Forward-SDE2} with
control law \R{eq:optimal_control_SDE}, where the desired potential $\psi_t$ can be obtained as follows. Let $(\widehat{\psi},\psi)$ denote the solution of
the associated Schr\"odinger system (\ref{eq:SS1a})--(\ref{eq:SS1d}), where $q_+(z_1|z_0)$ denotes the time--one forward 
transition kernel of \R{eq:Forward-SDE}. Then $\psi_t$ in \R{eq:optimal_control_SDE} is  the solution of the 
backward Kolmogorov equation (\ref{eq:BKE1}) with prescribed $\psi_1 = \psi$ at final time $t=1$. 
\end{lemma}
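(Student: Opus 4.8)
The plan is to show directly that the controlled path measure $\mathbb{P}^u$ generated by \R{eq:Forward-SDE2} with $u_t = \gamma\nabla_z\log\psi_t$ and $Z_0^+\sim \pi_0$ coincides with the Schr\"odinger path measure $\mathbb{P}^\ast$, where the latter is characterised as the minimiser of ${\rm KL}(\widetilde{\mathbb{P}}||\mathbb{Q})$ over all $\widetilde{\mathbb{P}}\ll\mathbb{Q}$ whose marginals at $t=0$ and $t=1$ equal $\pi_0$ and $\widehat{\pi}_1$. I would carry this out in three steps: (i) identify the time--one transition kernel of $\mathbb{P}^u$ with the Schr\"odinger kernel $q_+^\ast$ of \R{eq:SS_kernel}, so that the marginal constraints are met; (ii) exhibit $\dd\mathbb{P}^u/\dd\mathbb{Q}$ in the Doob $h$-transform product form $\psi(z_1)\,\widehat{\psi}(z_0)^{-1}$; and (iii) conclude optimality from a Pythagorean identity for the Kullback--Leibler divergence.

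For step (i) I would note that, since $\psi_1 = \psi >0$ by construction, Lemma \ref{twisting_SDEs} gives the time--one forward kernel of the controlled SDE \R{eq:Forward-SDE2} as $\psi(z_1)\,q_+(z_1|z_0)\,\psi_0(z_0)^{-1}$, where $\psi_0$ denotes the value at $t=0$ of the backward Kolmogorov solution with terminal datum $\psi_1=\psi$. By the very definition of $\psi_t$, this value is $\psi_0(z_0) = \int q_+(z_1|z_0)\,\psi(z_1)\,\dd z_1 = \widehat{\psi}(z_0)$, the last equality being exactly \R{eq:SS1d}. Hence the time--one kernel of $\mathbb{P}^u$ equals $q_+^\ast$, and, starting from $\pi_0$ as in \R{eq:initial_pdf}, the joint endpoint law of $\mathbb{P}^u$ is $q_+^\ast(z_1|z_0)\,\pi_0(z_0)$, whose $t=0$ marginal is $\pi_0$ and whose $t=1$ marginal is $\widehat{\pi}_1$ by the computation \R{eq:SS_cond2}. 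Thus $\mathbb{P}^u$ lies in the admissible set of the Schr\"odinger problem.

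For steps (ii)--(iii) I would use that $\psi_t$ solves the backward Kolmogorov equation \R{eq:BKE1}, so that $\psi_t(Z_t^+)$ is a martingale along the forward SDE \R{eq:Forward-SDE} (cf. \R{eq:material_advection}) and the drift $u_t=\gamma\nabla_z\log\psi_t$ is precisely the drift of Doob's $h$-transform of $\mathbb{Q}$ with $h=\psi$. Combining Girsanov's theorem \R{eq:RND} with It\^o's formula for $\log\psi_t(Z_t^+)$ --- or, equivalently, invoking \citeasnoun{sr:DP91} (Theorem~2.1) as in Lemma \ref{twisting_SDEs} --- one obtains
\begin{equation*}
\frac{\dd\mathbb{P}^u}{\dd\mathbb{Q}}\Big|_{z_{[0,1]}} = \frac{\psi(z_1)}{\widehat{\psi}(z_0)} ,
\end{equation*}
which is a bona fide density because $\mathbb{Q}\big[\psi(Z_1)\,\widehat{\psi}(Z_0)^{-1}\big] = \pi_0\big[\widehat{\psi}^{-1}\widehat{\psi}\big]=1$ by \R{eq:SS1d}, and whose conditional expectation given $Z_0$ under $\mathbb{Q}$ is $1$, so that the initial law $\pi_0$ is preserved. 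For any admissible competitor $\widetilde{\mathbb{P}}$ --- necessarily $\widetilde{\mathbb{P}}\ll\mathbb{P}^u$ since $\mathbb{P}^u$ is equivalent to $\mathbb{Q}$ --- this yields
\begin{equation*}
{\rm KL}(\widetilde{\mathbb{P}}||\mathbb{Q}) = {\rm KL}(\widetilde{\mathbb{P}}||\mathbb{P}^u) + \widetilde{\mathbb{P}}\Big[\log\frac{\psi(Z_1)}{\widehat{\psi}(Z_0)}\Big] = {\rm KL}(\widetilde{\mathbb{P}}||\mathbb{P}^u) + \widehat{\pi}_1[\log\psi] - \pi_0[\log\widehat{\psi}] ,
\end{equation*}
and the last two terms depend only on the prescribed marginals. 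Hence $\mathbb{P}^\ast = \arg\inf_{\widetilde{\mathbb{P}}}{\rm KL}(\widetilde{\mathbb{P}}||\mathbb{Q}) = \arg\inf_{\widetilde{\mathbb{P}}}{\rm KL}(\widetilde{\mathbb{P}}||\mathbb{P}^u) = \mathbb{P}^u$, which is the claim; in particular $\mathbb{P}^\ast$ has marginal $\widehat{\pi}_1$ at $t=1$.

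The hard part will be making step (ii) rigorous for the atomic initial law \R{eq:initial_pdf}: the control $u_t=\gamma\nabla_z\log\psi_t$ and the associated $h$-transform become singular as $t\to 0$ (cf. the regularisation remark following \R{eq:mODE}), so one must verify that $\psi_t>0$ on the relevant region, that ${\rm KL}(\mathbb{P}^u||\mathbb{Q})$ is finite, and that the Novikov-type integrability conditions needed to apply Girsanov hold. As elsewhere in the paper, I would handle these points by regularising $\pi_0$ to $\frac{1}{M}\sum_i {\rm n}(\,\cdot\,;z_0^i,\epsilon I)$ and passing to the limit $\epsilon\to 0$, and, in the generality required here, by appealing to the analyses of \citeasnoun{sr:DP91}, \citeasnoun{sr:L14} and \citeasnoun{sr:CGP14}.
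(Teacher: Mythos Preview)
Your argument is correct. The paper itself does not supply a proof for this lemma: it is stated as a summary of the preceding discussion, which in turn defers to \citeasnoun{sr:DP91} (and implicitly to \citeasnoun{sr:L14}, \citeasnoun{sr:CGP14}) for the detailed justification. Your three-step route---identify the time-one kernel of the controlled SDE with $q_+^\ast$ via Lemma~\ref{twisting_SDEs} and \R{eq:SS1d}, compute $\dd\mathbb{P}^u/\dd\mathbb{Q}=\psi(z_1)/\widehat{\psi}(z_0)$ from Girsanov and the martingale property \R{eq:material_advection}, and then conclude optimality from the Pythagorean decomposition of ${\rm KL}(\,\cdot\,||\mathbb{Q})$---is precisely the standard $h$-transform argument underlying those references, so you are effectively supplying what the paper only cites. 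Your closing caveat about the atomic initial law \R{eq:initial_pdf} and the need for regularisation is well placed and matches how the paper handles such singularities elsewhere.
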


\medskip

\begin{remark} As already pointed out in the context of smoothing, the desired potential $\psi_t$ can also be obtained by solving
an appropriate backward SDE. 
More specifically, given the solution $(\widehat{\psi},\psi)$ and the implied PDF $\widetilde{\pi}_0^+ := \pi_0^\psi = \pi_0/\widehat{\psi}$ of
the Schr\"odinger system \R{eq:SS1a}--\R{eq:SS1d}, let $\widetilde{\pi}_t^+$, $t\ge 0$, denote 
the marginals of the forward SDE \R{eq:Forward-SDE} with $Z_0^+ \sim \widetilde{\pi}_0^+$. 
Furthermore, consider the backward SDE \R{eq:Backward-SDE}
with drift term
\begin{equation} \label{eq:drift_schroedinger}
b_t(z) = f_t(z) -\gamma \nabla_z \log \widetilde{\pi}_t^+ (z) ,
\end{equation}
and final time condition $Z_1^- \sim \widehat{\pi}_1$. Then the choice of $\widetilde{\pi}_0^+$ ensures that $Z_0^- \sim \pi_0$.
Furthermore the desired control in \R{eq:Forward-SDE2} is provided by
\begin{equation*}
u_t = \gamma \nabla_z \log \frac{\widetilde{\pi}_t^-}{\widetilde{\pi}_t^+} ,
\end{equation*}
where $\widetilde{\pi}_t^-$ denotes the marginal distributions of the backward SDE \R{eq:Backward-SDE}
with drift term \R{eq:drift_schroedinger} and $\widetilde{\pi}_1^- = \widehat{\pi}_1$. We will return to this reformulation of
the Schr\"odinger problem in Section \ref{sec:numerics} when considering it as the limit of a sequence of smoothing problems.
\end{remark}

\medskip

\begin{remark}
The solution to the Schr\"odinger problem for linear SDEs and Gaussian marginal distributions has been discussed 
in detail by \citeasnoun{sr:CGP14b}.
\end{remark}


\subsubsection{Discrete measures} \label{sec:SP_DM}

We finally discuss the Schr\"odinger problem in the context of finite-state Markov chains in more detail.
These results will be needed in the following sections on the numerical implementation of the Schr\"odinger approach
to sequential data assimilation.

Let us therefore consider an example which will be closely related to the discussion in Section \ref{sec:numerics}.
We are given a bi-stochastic matrix $Q \in \mathbb{R}^{L\times M}$ with all entries satisfying $q_{lj}>0$ 
and two discrete probability measures represented by vectors $p_1 \in \mathbb{R}^L$ and $p_0 \in \mathbb{R}^M$, respectively. 
Again we assume for simplicity that all entries in $p_1$ and $p_0$ are strictly positive. 
We introduce the set of all bi-stochastic $L\times M$ matrices 
with those discrete probability measures as marginals, that is,
\begin{equation} \label{eq:constraint_Sinkhorn}
\Pi_{\rm s} := \left\{ P \in \mathbb{R}^{L\times M}: \,P \ge 0,\, P^\T \mathbb{1}_L  = p_0,\, P\mathbb{1}_M = p_1\,\right\} .
\end{equation}
Solving Schr\"odinger's system \R{eq:SS1a}--\R{eq:SS1d} 
corresponds to finding two non--negative vectors $u \in \mathbb{R}^L$ and $v\in \mathbb{R}^M$
such that
\begin{equation*}
P^\ast := D(u)\,Q\,D(v)^{-1} \in \Pi_{\rm s}\,.
\end{equation*}
In turns out that $P^\ast$ is uniquely determined and minimises the Kullback-Leibler divergence between 
all $P\in \Pi_{\rm s}$ and the reference matrix $Q$, that is,
\begin{equation} \label{eq:disc_KL}
P^\ast = \arg \min_{P\in \Pi_{\rm s}} {\rm KL}\,(P||Q)\,.
\end{equation}
See \citeasnoun{sr:PC18} and the following remark for more details.

\medskip

\begin{remark} \label{rem:Var_S}
If we make the {\it ansatz}
\begin{equation*}
p_{lj} = \frac{u_l q_{lj}}{v_j} ,
\end{equation*}
then the minimisation problem \R{eq:disc_KL} becomes equivalent to
\begin{equation*}
P^\ast = \arg \min_{(u,v)>0} \sum_{l,j} p_{lj} \left( \log u_l - \log v_j\right)
\end{equation*}
subject to the additional constraints
\begin{equation*}
P\, \mathbb{1}_M = D(u)QD(v)^{-1} \,\mathbb{1}_M = p_1,\, P^\T \,\mathbb{1}_L = D(v)^{-1} Q^\T D(u)\,\mathbb{1}_L = p_0\,.
\end{equation*}
Note that these constraint determine $u> 0$ and $v> 0$ up to a common scaling factor. Hence
\R{eq:disc_KL} can be reduced to finding $(u,v) >0$ such that
\begin{equation*}
u^\T \mathbb{1}_L  = 1,\quad P\mathbb{1}_M = p_1, \quad P^\T \mathbb{1}_L = p_0 .
\end{equation*}
Hence we have shown that solving the Schr\"odinger system is equivalent to solving the minimisation problem \R{eq:disc_KL}
for discrete measures. Thus
\begin{equation*}
\min_{P\in \Pi_{\rm s}} {\rm KL}\,(P||Q) = p_1^\T \log u - p_0^\T \log v .
\end{equation*}
\end{remark}

\medskip

\begin{lemma} \label{lemma_Sinkhorn}
The Sinkhorn iteration \cite{sr:S67}
\begin{align} \label{eq:Sinkhorn_al1a}
u^{k+1} &:= D(P^{2k} \mathbb{1}_M)^{-1} \,p_1,\\ \label{eq:Sinkhorn_al1b}
P^{2k+1} &:= D(u^{k+1})\,P^{2k} ,\\ \label{eq:Sinkhorn_al1c}
v^{k+1} &:= D(p_0)^{-1}\,(P^{2k+1})^\T \, \mathbb{1}_L ,\\
P^{2k+2} &:= P^{2k+1} \,D(v^{k+1})^{-1} ,\label{eq:Sinkhorn_al1d}
\end{align}
$k=0,1,\ldots$, with initial $P^0 = Q \in \mathbb{R}^{L\times M}$ provides an algorithm for computing $P^\ast$, that is,
\begin{equation} \label{eq:convergence_Sinkhorn}
\lim_{k\to \infty} P^k = P^\ast .
\end{equation}
\end{lemma}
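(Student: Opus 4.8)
The plan is to establish convergence of the Sinkhorn iteration in two stages: first showing that the alternating projection scheme \R{eq:Sinkhorn_al1a}--\R{eq:Sinkhorn_al1d} is precisely the method of alternating Kullback--Leibler projections onto the two affine constraint sets, and then invoking the geometric structure of the KL divergence to conclude convergence to the unique minimiser $P^\ast$ characterised by \R{eq:disc_KL}. Concretely, let $\mathcal{C}_1 := \{P \ge 0 : P\mathbb{1}_M = p_1\}$ (correct row sums) and $\mathcal{C}_0 := \{P \ge 0 : P^\T \mathbb{1}_L = p_0\}$ (correct column sums), so that $\Pi_{\rm s} = \mathcal{C}_1 \cap \mathcal{C}_0$. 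The first step is to verify that the update producing $P^{2k+1}$ from $P^{2k}$ is the $I$-projection $\arg\min\{{\rm KL}(P\,||\,P^{2k}) : P \in \mathcal{C}_1\}$, and likewise $P^{2k+2}$ from $P^{2k+1}$ is the $I$-projection onto $\mathcal{C}_0$; both are elementary Lagrange-multiplier computations showing the projector acts by rescaling rows (resp. columns) by the ratio of desired to actual marginal, which is exactly what $D(u^{k+1})$ and $D(v^{k+1})^{-1}$ do in \R{eq:Sinkhorn_al1a}--\R{eq:Sinkhorn_al1d}.

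The second step is to show the iterates stay in the right scaling class and converge. Since each iterate is obtained from $P^0 = Q$ by left- and right-multiplication by positive diagonal matrices, every $P^k$ has the form $D(a^k) Q D(b^k)^{-1}$ with $a^k, b^k > 0$; hence any limit point is of the form $D(u) Q D(v)^{-1}$, which by the uniqueness statement already quoted before the lemma must equal $P^\ast$. To get genuine convergence (not just a convergent subsequence) one uses the Csisz\'ar three-points inequality for the $I$-divergence: for any $P \in \Pi_{\rm s}$,
\begin{equation*}
{\rm KL}(P\,||\,P^{2k}) \ge {\rm KL}(P\,||\,P^{2k+1}) + {\rm KL}(P^{2k+1}\,||\,P^{2k}),
\end{equation*}
and the analogous inequality for the projection onto $\mathcal{C}_0$. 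Summing these telescopes ${\rm KL}(P^\ast\,||\,P^k)$ into a monotone decreasing (hence convergent) sequence whose consecutive-iterate KL gaps ${\rm KL}(P^{k+1}\,||\,P^k)$ are therefore summable and tend to zero; by Pinsker this forces $\|P^{k+1} - P^k\|_1 \to 0$, and combined with boundedness of the iterates (they all lie in the compact set of nonnegative matrices with entrywise bound controlled by $\max_l p_1(l)$) and the fact that every limit point equals the unique $P^\ast$, one concludes $P^k \to P^\ast$.

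The main obstacle is the convergence step rather than the identification step: one must justify that the iterates do not degenerate (some entry tending to $0$ or scalings blowing up), which is where strict positivity of $Q$, $p_0$, $p_1$ is essential, and one must handle the fact that $u^k, v^k$ are only determined up to a common scalar, so it is $P^k$ itself — not the multipliers — that converges. A clean way to package all of this is to cite the standard alternating $I$-projection theory (e.g. via the arguments in \citeasnoun{sr:PC18}), verifying only the two projection-formula identities by hand and leaving the geometric convergence to that reference; alternatively one can give the self-contained telescoping argument sketched above, which is short once the three-points inequality is granted.
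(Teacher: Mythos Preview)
Your proposal is correct, but it takes a genuinely different route from the paper. The paper's own proof is essentially a one-line citation: it refers to \citeasnoun{sr:PC18} and notes that convergence follows from the \emph{contraction property of the iteration with respect to the Hilbert metric on the projective cone of positive vectors}. In other words, the paper invokes the Birkhoff--Hopf contraction argument, in which one tracks the scaling vectors $u^k$ (or $v^k$) in the projective cone and shows that the Sinkhorn map is a strict contraction for the Hilbert projective metric whenever $Q>0$; convergence of $P^k$ then follows immediately, together with an explicit geometric rate governed by Birkhoff's contraction coefficient of $Q$.

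Your approach instead casts the iteration as alternating $I$-projections onto the affine constraint sets $\mathcal{C}_1$ and $\mathcal{C}_0$ and uses Csisz\'ar's three-points (Pythagorean) inequality to produce a monotone, telescoping Lyapunov functional ${\rm KL}(P^\ast\,||\,P^k)$. This is a perfectly valid and classical alternative. What it buys you is a more self-contained, information-geometric argument that generalises cleanly to other Bregman divergences and to alternating projections onto more than two linear families; what the Hilbert-metric route buys is a sharper quantitative statement (an explicit linear convergence rate) with essentially no extra work once the contraction coefficient is computed. Your remark that the scaling vectors are only determined up to a common scalar, and that one should therefore track $P^k$ rather than $(u^k,v^k)$, is exactly the reason the Hilbert-metric argument lives on the \emph{projective} cone. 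Either way the strict positivity of $Q$, $p_0$, $p_1$ is the key structural hypothesis, and you identify it correctly.
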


\medskip

\begin{proof} See, for example, 
\citeasnoun{sr:PC18} for a proof of \R{eq:convergence_Sinkhorn},
which is based on the contraction property of the iteration \R{eq:Sinkhorn_al1a}--\R{eq:Sinkhorn_al1d} with respect to
the Hilbert metric on the projective cone of positive vectors. 
\end{proof}

\medskip

\noindent
It follows from \R{eq:convergence_Sinkhorn} that
\begin{equation*}
\lim_{k\to \infty} u^k = \mathbb{1}_L, \qquad \lim_{k\to \infty} v^k = \mathbb{1}_M .
\end{equation*}
The essential idea of the Sinkhorn iteration is to enforce 
\begin{equation*}
P^{2k+1} \, \mathbb{1}_M = p_1,\qquad (P^{2k})^\T \,\mathbb{1}_L = p_0
\end{equation*}
at each iteration step and that  the matrix $P^\ast$ satisfies both constraints simultaneously in the limit $k\to \infty$.
See \citeasnoun{sr:cuturi13}  for a computationally efficient
and robust implementation of the Sinkhorn iteration.

\medskip

\begin{remark} One can introduce a similar iteration for the Schr\"odinger system \R{eq:SS1a}--\R{eq:SS1d}.
For example, pick $\widehat{\psi}(z_0) = 1$ initially. Then \R{eq:SS1a} implies $\pi_0^\psi = \pi_0$ and \R{eq:SS1c}
$\pi^\psi_1 = \pi_1$. Hence $\psi = l/\beta$ in the first iteration. The second iteration starts with $\widehat{\psi}$ determined
by \R{eq:SS1d} with $\psi = l/\beta$. We again cycle through \R{eq:SS1a}, \R{eq:SS1c} and \R{eq:SS1b} in order to find the next approximation
to $\psi$. The third iteration takes now this $\psi$ and computes the associated $\widehat{\psi}$ from \R{eq:SS1d} \etc \,A numerical implementation
of this procedure requires the approximation of two integrals which essentially leads back to a Sinkhorn type algorithm in the weights of
an appropriate quadrature rule.
\end{remark}

%
%
%
\section{Numerical methods} \label{sec:numerics}
%
%
%
%

Having summarised the relevant mathematical foundation for prediction, filtering (data assimilation 
scenario (A)) and smoothing (scenario (B)), 
and the Schr\"odinger problem (scenario (C)), we now discuss numerical approximations suitable for ensemble-based data assimilation. 
It is clearly impossible to cover all available methods, and we will focus on a selection of approaches which are built around the idea of
optimal transport, ensemble transform methods and Schr\"odinger systems. We will also focus on methods that 
can be applied or extended to problems with high-dimensional state spaces even though we will not explicitly cover this topic
in this survey. See \citeasnoun{sr:reichcotter15}, \citeasnoun{sr:leeuwen15}, and \citeasnoun{sr:ABN16} instead.

\subsection{Prediction}

Generating samples from the forecast distributions $q_+(\cdot\,|z_0^i)$ is in most cases straightforward. The computational
expenses can, however, vary dramatically, and this impacts on the choice of algorithms for sequential data assimilation.
We demonstrate in this subsection how samples from the prediction PDF $\pi_1$ can be used to construct an associated 
finite state Markov chain that transforms $\pi_0$ into an empirical approximation of $\pi_1$.

\medskip

\begin{definition} \label{def:sample_based1}
Let us assume that we have $L\ge M$ independent samples $z_1^l$ from the $M$ 
forecast distributions $q_+(\cdot\,|z_0^j)$, $j=1,\ldots,M$. We introduce the $L \times M$ matrix $Q$ with 
entries
\begin{equation} \label{eq:discrete_forward}
q_{lj} := q_+(z_1^l|z_0^j) .
\end{equation}
We now consider the associated bi-stochastic matrix $P^\ast \in \mathbb{R}^{L\times M}$, 
as defined by \R{eq:disc_KL}, with the two probability vectors in \R{eq:constraint_Sinkhorn} given by $p_1 = \mathbb{1}_L/L
\in \mathbb{R}^L$ and 
$p_0 = \mathbb{1}_M/M \in \mathbb{R}^M$, respectively. The finite-state Markov chain
\begin{equation} \label{eq:discrete_Markov_kernel}
Q_+ := M P^\ast
\end{equation}
provides a sample-based approximation to the forward transition kernel $q_+(z_1|z_0)$.
\end{definition}

\medskip

\noindent
More precisely, the $i$th column of $Q_+$ provides an empirical approximation to $q_+(\cdot|z_0^i)$ and 
\begin{equation*}
Q_+ \,p_0 = p_1 = \Frac{1}{L} \mathbb{1}_L ,
\end{equation*}
which is in agreement with the fact that the $z_1^l$ are equally weighted samples from the forecast PDF $\pi_1$.

\medskip

\begin{remark} 
Because of the simple relation between a bi-stochastic matrix $P \in \Pi_{\rm s}$ with $p_0$ in 
\R{eq:constraint_Sinkhorn} given by $p_0 = \mathbb{1}_M/M$ and its associated finite-state Markov chain $Q_+ = MP$, 
one can reformulate the minimisation problem \R{eq:disc_KL} in those cases directly in terms of Markov chains $Q_+ \in \Pi_{\rm M}$ 
with the definition of $\Pi_{\rm s}$ adjusted to
\begin{equation} \label{eq:Pi_M}
\Pi_{\rm M} := \left\{ Q \in \mathbb{R}^{L\times M}: \,Q \ge 0,\, Q^\T \mathbb{1}_L  = \mathbb{1}_M ,\, \Frac{1}{M} Q\mathbb{1}_M = p_1\,\right\} .
\end{equation}
\end{remark}

\begin{remark}
The associated backward transition kernel $Q_- \in \mathbb{R}^{M\times L}$ satisfies
\begin{equation*}
Q_- \,D(p_1)  = (Q_+ \,D(p_0))^\T
\end{equation*}
and is hence given by
\begin{equation*}
Q_- =  (Q_+\,D(p_0))^\T \,D(p_1)^{-1} = \frac{L}{M} Q_+^\T .
\end{equation*}
Thus
\begin{equation*}
Q_- \,p_1 = D(p_0) \,Q_+^\T \,\mathbb{1}_L = D(p_0)\,\mathbb{1}_M = p_0 ,
\end{equation*}
as desired.
\end{remark}

\begin{definition}
We can extend the concept of twisting to discrete Markov chains such as \R{eq:discrete_Markov_kernel}. A twisting
potential $\psi$ gives rise to a vector $u \in \mathbb{R}^L$ with normalised entries
\begin{equation*}
u_l = \frac{\psi(z_1^l)}{\sum_{k=1}^L \psi(z_1^k)} ,
\end{equation*}
$l=1,\ldots,L$. The twisted finite-state Markov kernel is now defined by 
\begin{equation} \label{eq:twisted_discrete_Markov_kernel}
Q^\psi_+ := D(u) \,Q_+ \,D(v)^{-1}, \qquad v := (D(u)\,Q_+)^\T \,\mathbb{1}_L \in \mathbb{R}^M ,
\end{equation}
and thus $\mathbb{1}_L^\T\, Q^\psi_+ = \mathbb{1}_M^\T$, as required for a Markov kernel. The twisted forecast 
probability is given by 
\begin{equation*}
p_1^\psi :=Q^\psi_+ \,p_0
\end{equation*}
with $p_0 = \mathbb{1}_M/M$. Furthermore, if we set $p_0 = v$ then $p_1^\psi = u$.
\end{definition}


\subsubsection{Gaussian model errors (cont.)}

The proposal density is given by \R{eq:Gaussian_kernel} and it is easy to produce $K> 1$ samples from
each of the $M$ proposals $q_+(\cdot\,|z_0^j)$. Hence we can make the total sample size $L = K\,M$ 
as large as desired. In order to produce $M$ samples, $\widetilde{z}_1^j$, from a twisted finite-state Markov chain
\R{eq:twisted_discrete_Markov_kernel}, we draw a single realisation from each of the $M$ associated 
discrete random variables $\widetilde{Z}_1^j$, $j=1,\ldots,M$, 
with probabilities
\begin{equation*}
\mathbb{P}[\widetilde{Z}^j_1(\omega) = z_1^l] = (Q_+^\psi)_{lj} .
\end{equation*}
We will provide more details when discussing the Schr\"odinger problem in the context
of Gaussian model errors in Section \ref{sec:Gaussian_Schrodinger}.


\subsubsection{SDE models (cont.)}  \label{sec:SDE_model_pred}
The Euler-Maruyama method \cite{sr:Kloeden}
\begin{equation} \label{eq:EMM1}
Z_{n+1}^+ = Z_{n}^+ + f_{t_{n}}(Z_{n}^+) \, \Delta t+ (\gamma \Delta t)^{1/2} \, \Xi_{n} ,\qquad \Xi_{n} \sim {\rm N}(0,I),
\end{equation}
$n=0,\ldots,N-1$, will be used for the numerical approximation of \R{eq:Forward-SDE} 
with step-size $\Delta t := 1/N$, $t_n = n\,\Delta t$.  In other words, we replace $Z_{t_n}^+$ with its numerical approximation
$Z_n^+$. A numerical approximation (realisation) of the whole solution path $z_{[0,1]}$ 
will be denoted by $z_{0:N}  = Z_{0:N}^+(\omega)$ and can be 
computed recursively due to the Markov property of the Euler-Maruyama scheme. 
The marginal PDFs of $Z_n^+$ are denoted by $\pi_n$.

For any finite number of time-steps $N$, we can define a joint 
PDF $\pi_{0:N}$ on $\mathcal{U}_N = \mathbb{R}^{N_z\times (N+1)}$ via
\begin{equation} \label{eq:measure_filter_SDE}
\pi_{0:N}(z_{0:N}) \propto \exp \left( -\frac{1}{2\Delta t} \sum_{n=0}^{N-1} \| \eta_n \|^2
\right) \pi_0(z_0)
\end{equation}
with 
\begin{equation} \label{eq:residual}
\eta_n := \gamma^{-1/2} \left(z_{n+1} - z_{n} -f_{t_{n}}(z_{n})\,\Delta t \right)
\end{equation}
and $\eta_n = \Delta t^{1/2} \Xi_n(\omega)$. Note that the joint PDF $\pi_{0:N}(z_{0:N})$ can also be expressed in
terms of $z_0$ and $\eta_{0:N-1}$.

The numerical approximation of SDEs provides an example for which the increase in computational cost for
producing $L>M$ samples from the PDF $\pi_{0:N}$  versus $L=M$ is non-trivial, in general. 

%

\medskip

We now extend Definition \ref{def:sample_based1} to the case of temporally discretised SDEs in the form of
\R{eq:EMM1}. 

\begin{definition} \label{def:sample_based2}
Let us assume that we have $L=M$ independent numerical solutions $z_{0:N}^i$ of \R{eq:EMM1}. We introduce
an $M\times M$ matrix $Q_n$ for each $n=1,\ldots,N$ with entries
\begin{equation*}
q_{lj} = q_+(z_{n}^l|z_{n-1}^j) := {\rm n}(z_n^l;z_{n-1}^j+\Delta t f(z_{n-1}^j),\gamma \Delta t\,I) .
\end{equation*}
With each $Q_n$ we associate a finite-state Markov chain $Q^+_n$ as defined by
\R{eq:discrete_Markov_kernel} for general transition densities $q_+$ in Definition \ref{def:sample_based1}. An approximation
of the Markov transition from time $t_0=0$ to $t_1=1$ is now provided by
\begin{equation} \label{eq:SDE_transition}
Q_+ := \prod_{n=1}^N Q^+_n\,.
\end{equation}
\end{definition}

\medskip

\begin{remark} The approximation \R{eq:discrete_Markov_kernel} can be 
related to the diffusion map approximation of the infinitesimal generator
of Brownian dynamics 
\begin{equation} \label{eq:BD5}
{\rm d}Z_t^+ = -\nabla_z U(Z_t^+)\,{\rm d}t + \sqrt{2}\, \dd W_t^+
\end{equation}
with potential $U(z) = - \log \pi^\ast (z)$ in the following sense. 
First note that $\pi^\ast$ is invariant under the associated Fokker--Planck equation
\R{eq:FPE1} with (time-independent) operator ${\cal L}^\dagger$ written in the form
\begin{equation*}
{\cal L}^\dagger \pi =  \nabla_z \cdot \left( \pi^\ast \nabla_z \frac{\pi}{\pi^\ast}\right) .
\end{equation*}
Let $z^i$, $i=1,\ldots,M$, denote $M$ samples from the invariant PDF $\pi^\ast$ and define the symmetric matrix $Q\in
\mathbb{R}^{M\times M}$ with entries
\begin{equation*}
q_{lj} = {\rm n}(z^l;z^j,2\Delta t\, I) .
\end{equation*}
Then the associated (symmetric) matrix \R{eq:discrete_Markov_kernel}, as introduced in definition \ref{def:sample_based1}, 
provides a discrete approximation to the evolution of a probability vector $p_0 \propto \pi_0/\pi^\ast$ over a time-interval 
$\Delta t$ and, hence, to the semigroup operator ${\rm e}^{\Delta t {\cal L}}$ with the infinitesimal generator ${\cal L}$ given by
\begin{equation} \label{eq:diffusion_map1}
{\cal L} g= \frac{1}{\pi^\ast} \nabla_z \cdot (\pi^\ast \nabla_z  g) .
\end{equation}
We formally obtain
\begin{equation} \label{eq:diffusion_map2}
{\cal L} \approx \frac{Q_+ - I}{\Delta t}
\end{equation}
for $\Delta t$ sufficiently small. The symmetry of $Q_+$ reflects the fact
that ${\cal L}$ is self-adjoint with respect to the weighted inner product
\begin{equation*}
\langle f,g\rangle_{\pi^\ast} = \int f(z)\,g(z)\,\pi^\ast (z)\,\dd z .
\end{equation*}
See \citeasnoun{sr:H18} for a discussion of alternative diffusion map approximations to the infinitesimal generator
${\cal L}$ and Appendix A for an application to the feedback particle filter formulation of continuous-time 
data assimilation.
\end{remark}

\medskip

\noindent
We also consider the discretisation 
\begin{equation} \label{eq:EMM2}
Z_{n+1}^+ = Z_{n}^+ + \left( f_{t_{n}}(Z_{n}^+) + u_{t_{n}}(Z_{n}^+) \right)
\Delta t+ (\gamma \Delta t)^{1/2}  
\,\Xi_{n} ,
\end{equation}
$n=0,\ldots,N-1$, of a controlled SDE \R{eq:Forward-SDE2} with associated PDF $\pi^u_{0:N}$ defined by
\begin{equation} \label{eq:measure_filter_SDE2}
\pi_{0:N}^u(z_{0:N}^u) \propto \exp \left( -\frac{1}{2\Delta t} \sum_{n=0}^{N-1} \| \eta^u_n \|^2 ,
\right) \pi_0(z_0)
\end{equation}
where
\begin{align*}
\eta_n^u &:= \gamma^{-1/2} \left\{ z_{n+1}^u- z_{n}^u - \left( f_{t_{n}}(z_{n}^u) + u_{t_{n}}(z_{n}^u)\right)\Delta t \right\}\\
&= \eta_n - \Frac{\Delta t}{\gamma^{1/2}} u_{t_n}(z_n^u) .
\end{align*}
Here $z_{0:N}^u$ denotes a realisation of the discretisation \R{eq:EMM2} with control laws $u_{t_n}$.
We find that
\begin{align*}
\frac{1}{2\Delta t} \|\eta_n^u\|^2 &= \frac{1}{2\Delta t} \|\eta_n\|^2 -
\frac{1}{\gamma^{1/2}} u_{t_{n}}(z_{n}^u)^\T \eta_n + \frac{\Delta t}{2\gamma} \|u_{t_n}(z_n^u)\|^2\\
&= \frac{1}{2\Delta t} \|\eta_n\|^2 - \frac{1}{\gamma^{1/2}} u_{t_{n}}(z_{n}^u)^\T \eta_n^u - \frac{\Delta t}{2\gamma} \|u_{t_n}(z_n^u)\|^2 ,
\end{align*}
and hence
\begin{equation} \label{eq:dRND}
\frac{\pi_{0:N}^u(z_{0:N}^u)}{\pi_{0:N}(z_{0:N}^u)} =
\exp \left( \frac{1}{2\gamma} \sum_{n=0}^{N-1} \left( \|u_{t_n}(z_n^u)\|^2\Delta t + 2 \gamma^{1/2}
u_{t_n}(z_n^u)^\T \eta_n^u\right) \right) ,
\end{equation}
which provides a discrete version of \R{eq:RND} since $\eta_n^u = \Delta t^{1/2} \,\Xi_n(\omega)$ are increments
of Brownian motion over time intervals of length $\Delta t$. 

\medskip

\begin{remark} Instead of discretising the forward SDE \R{eq:Forward-SDE} in order to produce samples
from the forecast PDF $\pi_1$, one can also start from the mean-field formulation \R{eq:mODE} and its
time discretisation, for example,
\begin{equation} \label{eq:interaction_prediction}
z_{n+1}^i = z_{n}^i + \left( f_{t_{n}}(z_{n}^i) + u_{t_{n}}(z_{n}^i) \right)
\Delta t
\end{equation}
for $i=1,\ldots,M$ and
\begin{equation*}
u_{t_{n}}(z) = -\frac{\gamma}{2} \nabla_z \log \widetilde{\pi}_n(z) .
\end{equation*}
Here $\widetilde{\pi}_n$ stands for an approximation to the marginal PDF $\pi_{t_n}$ based on the available 
samples $z_{n}^i$, $i=1,\ldots,M$. A simple approximation is obtained by the Gaussian PDF
\begin{equation*}
\widetilde{\pi}_n(z) = {\rm n}(z;\bar{z}_n,P_n^{zz})
\end{equation*}
with empirical mean
\begin{equation*}
\bar{z}_n = \frac{1}{M} \sum_{i=1}^M z_n^i
\end{equation*}
and empirical covariance matrix
\begin{equation*}
P_n^{zz} = \frac{1}{M-1} \sum_{i=1}^M z_n^i (z_n^i - \bar{z}_n)^\T .
\end{equation*}
The system \R{eq:interaction_prediction} becomes
\begin{equation*}
z_{n+1}^i = z_{n}^i + \left( f_{t_{n}}(z_{n}^i) + \gamma (P_n^{zz})^{-1}(z_{n}^i-\bar{z}_n) \right)\Delta t ,
\end{equation*}
$i=1,\ldots,M$, and provides an example of an interacting particle approximation. Similar mean-field
formulations can be found for the backward SDE \R{eq:Backward-SDE}.
\end{remark}

\subsection{Filtering} \label{sec:num_filtering}

Let us assume that we are given $M$ samples, $z_1^i$, from the forecast PDF
using forward transition kernels $q_+(\cdot\,|z_0^i)$, $i=1,\ldots,M$.
The likelihood function $l(z)$ leads to importance weights
\begin{equation} \label{eq:importance_weights_filtering1}
w^i \propto l(z_1^i) .
\end{equation}
We also normalise these importance weights such that \R{eq:normalised_w} holds.

\begin{remark}
The model evidence, $\beta$, can be estimated from the samples, $z_1^i$, and the likelihood, $l(z)$, as follows:
\begin{equation*}
\widetilde{\beta}  := \frac{1}{M} \sum_{i=1}^M l(z_1^i) .
\end{equation*}
If the likelihood is of the form
\begin{equation*}
l(z) \propto \exp \left( -\Frac{1}{2} (y_1-h(z))^\T R^{-1} (y_1-h(z) \right)
\end{equation*}
and the prior distribution in $y = h(z)$ can be approximated as being Gaussian with covariance
\begin{equation*}
P^{hh} := \frac{1}{M-1} \sum_{i=1}^M h(z_1^i)(h(z_1^i)-\bar{h})^\T, \qquad \bar{h} := \frac{1}{M} \sum_{i=1}^M
h(z_1^i) ,
\end{equation*}
then the evidence can be approximated by
\begin{equation*}
\widetilde{\beta} \approx \frac{1}{(2\pi)^{N_y/2} |P^{yy}|^{1/2}} \exp \left( -\Frac{1}{2} (y_1-\bar{h})^\T (P^{yy})^{-1} (y_1-
\bar{h})\right)
\end{equation*}
with
\begin{equation*}
P^{yy}:= R + P^{hh} .
\end{equation*}
Such an approximation has been used, for example, in \citeasnoun{sr:CBHG17}. See also \citeasnoun{sr:reichcotter15}
for more details on how to compute and use model evidence in the context of sequential data assimilation.
\end{remark}

\medskip

\noindent
Sequential data assimilation requires that we produce $M$ equally weighted samples $\widehat{z}_1^j \sim \widehat{\pi}_1$ 
from the $M$ weighted samples $z_1^i \sim \pi_1$ with weights $w^i$. This is a standard problem in Monte Carlo integration and 
there are many ways to tackle this problem, among which are multinomial, 
residual, systematic and stratified resampling \cite{sr:DC05}. Here we focus on those resampling methods which 
are based on a discrete Markov chain $P \in \mathbb{R}^{M\times M}$ with the property that
\begin{equation} \label{eq:resampling_MC1}
w = \Frac{1}{M} P\,\mathbb{1}_M, \qquad w = \left(\Frac{w^1}{M},\ldots,\Frac{w^M}{M}\right)^\T .
\end{equation}
The Markov property of $P$ implies that $P^\T \mathbb{1}_M = \mathbb{1}_M$. We now consider the set of all Markov chains
$\Pi_{\rm M}$, as defined by \R{eq:Pi_M}, with $p_1 = w$. Any Markov chain $P\in \Pi_{\rm M}$ can now be used for resampling, but 
we seek the Markov chain $P^\ast \in \Pi_{\rm M}$ which minimises the expected
distance between the samples, that is,
\begin{equation} \label{eq:optimal_transport1}
P^\ast = \arg \min_{P\in \Pi_{\rm M}} \sum_{i,j=1}^M p_{ij} \|z_1^i-z_1^j\|^2 .
\end{equation}
Note that \R{eq:optimal_transport1} is a special case of the optimal transport problem \R{eq:optimal_coupling1} 
with the involved probability measures being discrete measures. Resampling can now be performed according to
\begin{equation} \label{eq:optimal_transport2}
\mathbb{P}[\widehat{Z}_1^j (\omega)= z_1^i] = p_{ij}^\ast
\end{equation}
for $j = 1,\ldots,M$.  

Since, it is known that \R{eq:optimal_transport1}
converges to \R{eq:optimal_coupling1} as $M\to \infty$ \cite{sr:mccann95} and since \R{eq:optimal_coupling1} leads
to a transformation \R{eq:optimal_coupling3}, the resampling step \R{eq:optimal_transport2} has been replaced by
\begin{equation} \label{eq:optimal_transport3}
\widehat{z}_1^j = \sum_{i=1}^M z_1^i \,p_{ij}^\ast
\end{equation}
in the so-called ensemble transform particle filter (ETPF) \cite{sr:reich13,sr:reichcotter15}. In 
other words, the ETPF replaces resampling with probabilities $p_{ij}^\ast$ by its mean \R{eq:optimal_transport3} 
for each $j=1,\ldots,M$. The ETPF leads to a biased  approximation to the
resampling step which is consistent in the limit $M\to \infty$

The general formulation \R{eq:optimal_transport3} with the coefficients $p_{ij}^\ast$ chosen appropriately\footnote{The coefficients
$p_{ij}^\ast$ of an ensemble transform particle filter do not need to be non-negative and only satisfy 
$\sum_{i=1}^M p_{ij}^\ast = 1$ \cite{sr:AdWR17}.} leads to a large class of so-called ensemble transform particle filters
\cite{sr:reichcotter15}.  Ensemble transform particle filters generally result in biased and inconsistent but robust estimates 
which have found applications  to high-dimensional state space models \cite{sr:evensen,sr:survey18} for which traditional 
particle filters fail due to the \lq curse of dimensionality\rq{} \cite{sr:bengtsson08}. More specifically,  the class of ensemble transform 
particle filters includes the popular ensemble Kalman filters  \cite{sr:evensen,sr:reichcotter15,sr:survey18,sr:CBBE18} and
so-called second-order accurate particle filters with coefficients $p_{ij}^\ast$  in \R{eq:optimal_transport3} chosen such that 
the weighted ensemble mean
\begin{equation*}
\bar z_1 := \frac{1}{M} \sum_{i=1}^M w^i z_1^i
\end{equation*}
and the weighted ensemble covariance matrix
\begin{equation*}
\widetilde{P}^{zz} := \frac{1}{M}\sum_{i=1}^M w^i (z_1^i-\bar z_1)(z_1^i-\bar z_1)^\T
\end{equation*}
are exactly reproduced by the transformed and equally weighted particles $\widehat{z}_1^j$, $j=1,\ldots,M$, defined by
\R{eq:optimal_transport3}, that is,
\begin{equation*}
\frac{1}{M}\sum_{j=1}^M \widehat{z}_1^j = \bar z_1, \qquad \frac{1}{M-1}\sum_{j=1}^M
(\widehat{z}_1^i-\bar z_1)(\widehat{z}_1^i-\bar z_1)^\T = \widetilde{P}^{zz} .
\end{equation*}
See the survey paper by \citeasnoun{sr:survey18} and the paper by \citeasnoun{sr:AdWR17} for more details. A summary of
the ensemble Kalman filter can be found in Appendix C.

In addition, hybrid methods \cite{sr:frei13,sr:CRR15}, which bridge between classical particle filters and the ensemble Kalman filter, 
have recently been successfully applied to atmospheric fluid dynamics \cite{sr:RLK18}.

\begin{remark}
Another approach for transforming samples, $z_1^i$, from the forecast PDF $\pi_1$ into samples, $\widehat{z}_1^i$, 
from the filtering PDF $\widehat{\pi}_1$ 
is provided through the mean-field interpretation 
\begin{equation} \label{eq:IP_FP}
\frac{\dd}{\dd s} \breve{Z}_s = - \nabla_z \log \frac{\breve{\pi}_s(\breve{Z}_s)}{\widehat{\pi}_1(\breve{Z}_s)}\,,
\end{equation} 
of the Fokker-Planck equation \R{eq:FPE1} for a random variable $\breve{Z}_s$ with law  $\breve{\pi}_s$, 
drift term $f_s(z) = \nabla_z \log \widehat{\pi}_1$ and $\gamma = 2$, that is,
\begin{equation*}
\partial_s \breve{\pi}_s = \nabla_z \cdot \left( \breve{\pi}_s \nabla_z \log \frac{\breve{\pi}_s}{\widehat{\pi}_1} \right)
\end{equation*}
in artificial time $s\ge 0$. It holds under fairly general assumptions that
\begin{equation*}
\lim_{s \to \infty} \breve{\pi}_s = \widehat{\pi}_1
\end{equation*}
\cite{sr:P14} and one can set $\breve{\pi}_0 = \pi_1$. The more common approach would be to solve Brownian dynamics
\begin{equation*}
\dd \breve{Z}_s = \nabla_z \log \widehat{\pi}_1(Z_s)\,\dd t + \sqrt{2}\dd W_s^+
\end{equation*}
for each sample, $z_1^i$, from $\pi_1$, that is, $\breve{Z}_0(\omega) = z_1^i$, $i=1,\ldots,M$, at initial time and 
\begin{equation*}
\widehat{z}_1^i = \lim_{s\to \infty} \breve{Z}_s (\omega) .
\end{equation*}
In other words, formulation \R{eq:IP_FP} replaces stochastic Brownian dynamics with a deterministic 
interacting particle system. See Appendix A and Remark \ref{remark_opt} for further details.
\end{remark}

\subsection{Smoothing} \label{sec:num_smoothing}

Recall that the joint smoothing distribution $\widehat{\pi}(z_0,z_1)$ can be represented in the form
\R{eq:optimal_proposal1} with modified transition kernel \R{eq:smoothing_kernel} and smoothing distribution
\R{eq:smoothing_PDF0} at time $t_0$ with weights $\gamma^i$ determined by \R{eq:gamma_i}. 

Let us assume that it is possible to sample from $\widehat{q}_+(z_1|z_0^i)$ and that the weights $\gamma^i$ are
available. Then we can utilise \R{eq:optimal_proposal1} in sequential data assimilation as follows.
We first resample the $z_0^i$ at time $t_0$ using a discrete Markov chain $P \in \mathbb{R}^{M\times M}$
satisfying
\begin{equation} \label{eq:resampling_MC0}
\widehat{p}_0 = \Frac{1}{M} P\,\mathbb{1}_M\,, \quad \widehat{p}_0 := \gamma ,
\end{equation}
with $\gamma$ defined in \R{eq:definition_eM}.
Again optimal transportation can be used to identify a suitable $P$. More explicitly, 
we now consider the set of all Markov chains $\Pi_{\rm M}$, as defined by \R{eq:Pi_M}, with $p_1 = \gamma$. Then 
the Markov chain $P^\ast$ arising from the associated optimal transport problem \R{eq:optimal_transport1} can be used for resampling, 
that is,
\begin{equation*}
\mathbb{P}[\widetilde{Z}_0^j (\omega) = z_0^i] = p_{ij}^\ast .
\end{equation*}
Once equally weighted samples $\widehat{z}_0^i$, $i=1,\ldots,M$, from $\widehat{\pi}_0$ have been determined,
the desired samples $\widehat{z}_1^i$ from $\widehat{\pi}_1$ are simply given by
\begin{equation*}
\widehat{z}_1^i := \widehat{Z}_1^i(\omega), \qquad \widehat{Z}_1^i \sim \widehat{q}_+(\cdot\,|\widehat{z}_0^i) ,
\end{equation*}
for $i = 1,\ldots,M$.

The required transition kernels \R{eq:smoothing_kernel} are explicitly available for state space models with Gaussian model 
errors and Gaussian likelihood functions. In many other cases, these kernels are not explicitly available or are difficult to draw from. 
In such cases, one can resort to sample-based transition kernels.

For example, consider the twisted discrete Markov kernel \R{eq:twisted_discrete_Markov_kernel} with twisting potential
$\psi(z) = l(z)$. The associated vector $v$ from \R{eq:twisted_discrete_Markov_kernel} 
then gives rise to a probability vector  $\widehat{p}_0 = c\,v \in \mathbb{R}^M$ with $c>0$ an appropriate scaling factor, and
\begin{equation} \label{eq:discrete_smoothing_step}
\widehat{p}_1 := Q^\psi_+ \widehat{p}_0
\end{equation}
approximates the filtering distribution at time $t_1$. The Markov transition matrix $Q_+^\psi \in \mathbb{R}^{L\times M}$ 
together with $\widehat{p}_0$ provides an approximation to the smoothing kernel $\widehat{q}_+(z_1|z_0)$ 
and $\widehat{\pi}_0$, respectively. 

The approximations $Q_+^\psi \in \mathbb{R}^{L\times M}$ and $\widehat{p}_0 \in \mathbb{R}^M$ 
can be used to first generate equally weighted samples 
$\widehat{z}_0^i\in \{z_0^1,\ldots,z_0^M\}$ with distribution $\widehat{p}_0$ via, for example, resampling with replacement. 
If $\widehat{z}_0^i = z_0^k$ for an index $k=k(i) \in \{1,\ldots,M\}$, then 
\begin{equation*} 
\mathbb{P}[\widehat{Z}_1^i(\omega) = z_1^l] = (Q_+^\psi)_{lk}
\end{equation*}
for each $i=1,\ldots,M$. The $\widehat{z}_1^i$ are equally weighted samples  from the discrete filtering distribution 
$\widehat{p}_1$, which is an approximation to the continuous filtering PDF $\widehat{\pi}_1$.


\medskip

\begin{remark}
One has to take computational complexity and robustness 
into account when deciding whether to utilise methods from Section \ref{sec:num_filtering} or
this subsection to advance $M$ samples $z_0^i$ from the prior distribution $\pi_0$
into $M$ samples $\widehat{z}_1^i$ from the posterior distribution $\widehat{\pi}_1$. While the methods from
section \ref{sec:num_filtering} are easier to implement, the methods of this subsection benefit
from the fact that
\begin{equation*}
M > \frac{1}{ \|\gamma\|^2} \ge \frac{1}{\|w\|^2} \ge 1,
\end{equation*}
in general, where the importance weights $\gamma \in \mathbb{R}^M$ and $w \in \mathbb{R}^M$ are defined 
in \R{eq:definition_eM} and \R{eq:resampling_MC1}, respectively. 
In other words, the methods from this subsection lead to larger effective sample sizes \cite{sr:Liu,sr:APPSS17}.
\end{remark}

\begin{remark}
We mention that finding efficient methods for solving the more general smoothing problem \R{eq:Smoothing} is an active 
area of research. See, for example, the recent contributions by \citeasnoun{sr:GJL17} and \citeasnoun{sr:HBDD18} for 
discrete-time Markov processes, and \citeasnoun{sr:KR16} as well as \citeasnoun{sr:RK17} for smoothing in the context of SDEs.
Ensemble transform methods of the form \R{eq:optimal_transport3} can also be extended to the general smoothing problem. 
See, for example, \citeasnoun{sr:evensen} and \citeasnoun{sr:CBBE18} for extensions of the ensemble Kalman filter, 
and \citeasnoun{sr:KTAN17} for an extension of the nonlinear ensemble transform filter to the smoothing problem.
\end{remark}


\subsubsection{SDE models (cont.)}
After discretization in time, smoothing leads to a change from the forecast PDF (\ref{eq:measure_filter_SDE}) to
\begin{align*}
\widehat{\pi}_{0:N}(z_{0:N}) &:= \frac{l(z_N)\pi_{0:N}(z_{0:N})}{\pi_{0:N}[l]} \\
& \propto \exp \left( -\frac{1}{2 \Delta t} \sum_{n=0}^{N-1} \| \xi_n \|^2 
\right) \pi_0(z_0) \,l(z_N)
\end{align*}
with $\xi_n$ given by (\ref{eq:residual}),
or, alternatively,
\begin{equation*}
\frac{\widehat{\pi}_{0:N}}{\pi_{0:N}}(z_{0:N}) = \frac{l(z_N)}{\pi_{0:N}[l]} .
\end{equation*}

\medskip

\begin{remark} Efficient MCMC methods for sampling high-dimensional smoothing distributions can be found 
in \citeasnoun{sr:BGLFS17} and \citeasnoun{sr:BPSSS11}. Improved sampling can also be achieved by using 
regularized St\"ormer--Verlet time-stepping
methods \cite{sr:RH11} in a hybrid Monte Carlo method \cite{sr:Liu}. See Appendix B form more details.
\end{remark}


\subsection{Schr\"odinger Problem}

Recall that the Schr\"odinger system \R{eq:SS1a}--\R{eq:SS1d} reduces in our context to solving 
equations \R{eq:Sinkhorn1a}--\R{eq:Sinkhorn1c} for the unknown coefficients $\alpha^i$, $i=1,\ldots,M$. 
In order to make this problem tractable we need to replace the required expectation values with respect to $q_+(z_1|z_0^j)$ 
by Monte Carlo approximations. More specifically, let us assume that we have $L\ge M$ samples 
$z_1^l$ from the forecast PDF $\pi_1$. The associated $L\times M$ matrix $Q$ with entries
\R{eq:discrete_forward} provides a discrete approximation to the underlying Markov process defined by $q_+(z_1|z_0)$ 
and initial PDF \R{eq:initial_pdf}. 

The importance weights in the associated approximation to the filtering distribution
\begin{equation*}
\widehat{\pi}_1(z) = \frac{1}{L} \sum_{l=1}^L w^l \,\delta (z-z_1^l)
\end{equation*}
are given by \R{eq:importance_weights_filtering1} with the weights normalised such that 
\begin{equation} \label{eq:IW_normalised}
\sum_{l=1}^L w^l = L .
\end{equation}
Finding the coefficients $\{\alpha^i\}$ in 
\R{eq:Sinkhorn1a}--\R{eq:Sinkhorn1c} can now be reformulated as finding
two vectors $u\in \mathbb{R}^{L}$ and $v\in \mathbb{R}^{M}$ such
that 
\begin{equation} \label{eq:Sinkhorn1}
P^\ast := D(u) Q D(v)^{-1}
\end{equation}
satisfies $P^\ast \in \Pi_{\rm M}$ with $p_1 = w$ in \R{eq:Pi_M}, that is, more explicitly
\begin{equation} \label{eq:SS_cond3}
\Pi_{\rm M} = \left\{ P\in \mathbb{R}^{L\times M}: \,p_{lj} \ge 0,\, \sum_{l=1}^L p_{lj} = 1, \,
\Frac{1}{M} \sum_{j=1}^M p_{lj} = \Frac{w^l}{L} \right\} .
\end{equation}
We note that \R{eq:SS_cond3} are discrete approximations to \R{eq:SS_cond1} and \R{eq:SS_cond2}, respectively. 
The scaling factor $\widehat{\psi}$ in \R{eq:SS1a} is approximated by the vector $v$ up to a normalisation constant,
while the vector $u$ provides an approximation to $\psi$ in \R{eq:SS1b}. Finally, the desired approximations to the 
Schr\"odinger transition kernels $q_+^\ast (z_1|z_0^i)$, $i=1,\ldots,M$, are provided by the columns of $P^\ast$, that is,
\begin{equation*}
\mathbb{P}[\widehat{Z}_1^i(\omega) = z_1^l] = p^\ast_{li}
\end{equation*}
characterises the desired equally weighted samples $\widehat{z}_1^i$, $i=1,\ldots,M$, from the filtering distribution $\widehat{\pi}_1$.
See the following subsection for more details.

The required vectors $u$ and $v$ can be computed using the iterative Sinkhorn algorithm 
\R{eq:Sinkhorn_al1a}--\R{eq:Sinkhorn_al1c} \cite{sr:cuturi13,sr:PC18}.

\medskip

\begin{remark} Note that one can replace the forward transition kernel $q_+(z_1|z_0)$ in \R{eq:discrete_forward} with
any suitable twisted prediction kernel \R{eq:q-twisted}. This results in a modified matrix $Q$ in \R{eq:Sinkhorn1} 
and weights $w^l$ in \ref{eq:SS_cond3}. The resulting matrix \R{eq:Sinkhorn1} still provides an approximation to
the Schr\"odinger problem. 
\end{remark}

\medskip

\begin{remark} The approximation \R{eq:Sinkhorn1} can be extended to an approximation of the Schr\"odinger forward
transition kernels \R{eq:SS_forward_kernel} in the following sense. We use $\alpha^i = 1/v_i$ in \R{eq:SS_forward_kernel}
and note that the resulting approximation satisfies \R{eq:SS_cond2} while \R{eq:SS_cond1} now longer holds exactly. However, since
the entries $u_l$ of the vector $u$ appearing in \R{eq:Sinkhorn1} satisfy
\begin{equation*}
u_l = \frac{w^l}{L} \frac{1}{\sum_{j=1}^M q_+(z_1^l|z_0^j)/v_j} ,
\end{equation*}
it follows that
\begin{align*}
\int q_+^\ast (z_1|z_0^i)\,\dd z_1 & 
\approx \frac{1}{L} \sum_{l=1}^L \frac{l(z_1^l)}{\beta} \frac{q_+(z_1^l|z_0^i)/v_i}{
\sum_{j=1}^M q_+(z_1^l|z_0^j)/v_j} \\
&\approx \frac{1}{L} \sum_{l=1}^L w^l \frac{q_+(z_1^l|z_0^i)/v_i}{
\sum_{j=1}^M q_+(z_1^l|z_0^j)/v_j} = \sum_{l=1}^L p_{li}^\ast = 1 .
\end{align*}
Furthermore, one can use such continuous approximations in combination with Monte Carlo sampling methods which do not require 
normalised target PDFs.
\end{remark}

\subsubsection{Gaussian model errors (cont.)} \label{sec:Gaussian_Schrodinger}

One can easily generate $L$, $L\ge M$, i.i.d.~samples $z_1^l$ from the forecast PDF
\R{eq:Gaussian_kernel}, that is,
\begin{equation*}
Z_1^l \sim  \frac{1}{M}\sum_{j=1}^M {\rm n}(\cdot\,;\Psi(z_0^j),\gamma B) ,
\end{equation*}
and with the filtering distribution $\widehat{\pi}_1$ characterised through the importance weights
\R{eq:importance_weights_filtering1}.

We define the distance matrix $D \in \mathbb{R}^{L\times M}$ with entries
\begin{equation*}
d_{lj} := \frac{1}{2} \|z_1^l-\Psi(z_0^j)\|_B^2, \qquad \|z\|_B^2 := z^\T B^{-1} z ,
\end{equation*}
and the matrix $Q  \in \mathbb{R}^{L\times M}$ with entries
\begin{equation*}
q_{lj} := {\rm e}^{-d_{lj}/\gamma} .
\end{equation*}
The Markov chain $P^\ast \in \mathbb{R}^{L\times M}$ is now given by
\begin{equation*}
P^\ast = \arg \min_{P\in \Pi_{\rm M}} {\rm KL}(P||Q)
\end{equation*}
with the set $\Pi_{\rm M}$ defined by \R{eq:SS_cond3}.

Once $P^\ast$ has been computed, the desired Schr\"odinger transitions from $\pi_0$ to $\widehat{\pi}_1$
can be represented as follows.  The Schr\"odinger transition kernels 
$q^\ast_+(z_1|z_0^i)$ are approximated for each $z^i_0$ by 
\begin{equation} \label{eq:discS}
\tilde{q}^\ast_+(z_1|z_0^i) := \sum_{l=1}^L p^\ast_{li} \,\delta(z_1-z_1^l)\,, \qquad i=1,\ldots,M .
\end{equation}
The empirical measure in (\ref{eq:discS}) converges weakly to the desired $q_+^\ast (z_1|z_0^i)$ as $L\to \infty$ and
\begin{equation*}
\widehat{\pi}_1(z_1) \approx \frac{1}{M} \sum_{i=1}^M \delta(z-\widehat{z}_1^i),
\end{equation*}
with
\begin{equation} \label{eq:discS2}
\widehat{z}_1^i = \widehat{Z}_1^i(\omega), \quad \widehat{Z}_1^i \sim \tilde{q}^\ast_+ (\cdot\,|z_0^i),
\end{equation}
provides the desired approximation of $\widehat{\pi}_1$ by $M$ equally weighted particles 
$\widehat{z}_1^i$, $i=1,\ldots,M$.

We remark that \R{eq:discS} has been used to produce the Schr\"odinger transition kernels for 
Example \ref{ex:example1} and the right panel of Figure \ref{fig:example1b} in particular. More specifically, we
have $M=11$ and used $L = 11\,000$. Since the particles $z_1^l \in \mathbb{R}$, $l=1,\ldots,L$, 
are distributed according to  the forecast PDF $\pi_1$, a function representation of 
$\tilde{q}^\ast_+(z_1|z_0^i)$ over all of $\mathbb{R}$ is provided by  interpolating $p^\ast_{li}$ onto 
$\mathbb{R}$ and multiplication of this interpolated function by $\pi_1(z)$.

For $\gamma\ll 1$, the measure in (\ref{eq:discS}) can also be approximated by a Gaussian measure with mean
\begin{equation*}
\bar{z}_1^i := \sum_{l=1}^L z_1^l p^\ast_{li} 
\end{equation*}
and covariance matrix $\gamma B$, that is, we replace (\ref{eq:discS2}) with
\begin{equation*}
\widehat{Z}_1^i \sim {\rm N}(\bar z_1^i,\gamma B)
\end{equation*}
for $i=1,\ldots,M$.

\subsubsection{SDE (cont.)}

One can also apply \R{eq:Sinkhorn1} in order to approximate the Schr\"odinger
problem associated with SDE models. We typically use $L=M$ in this case and utilise \R{eq:SDE_transition}
in place of $Q$ in \R{eq:Sinkhorn1}. The set $\Pi_{\rm M}$ is still given by \R{eq:SS_cond3}.

\begin{figure}
\begin{center}
\includegraphics[width=0.7\textwidth]{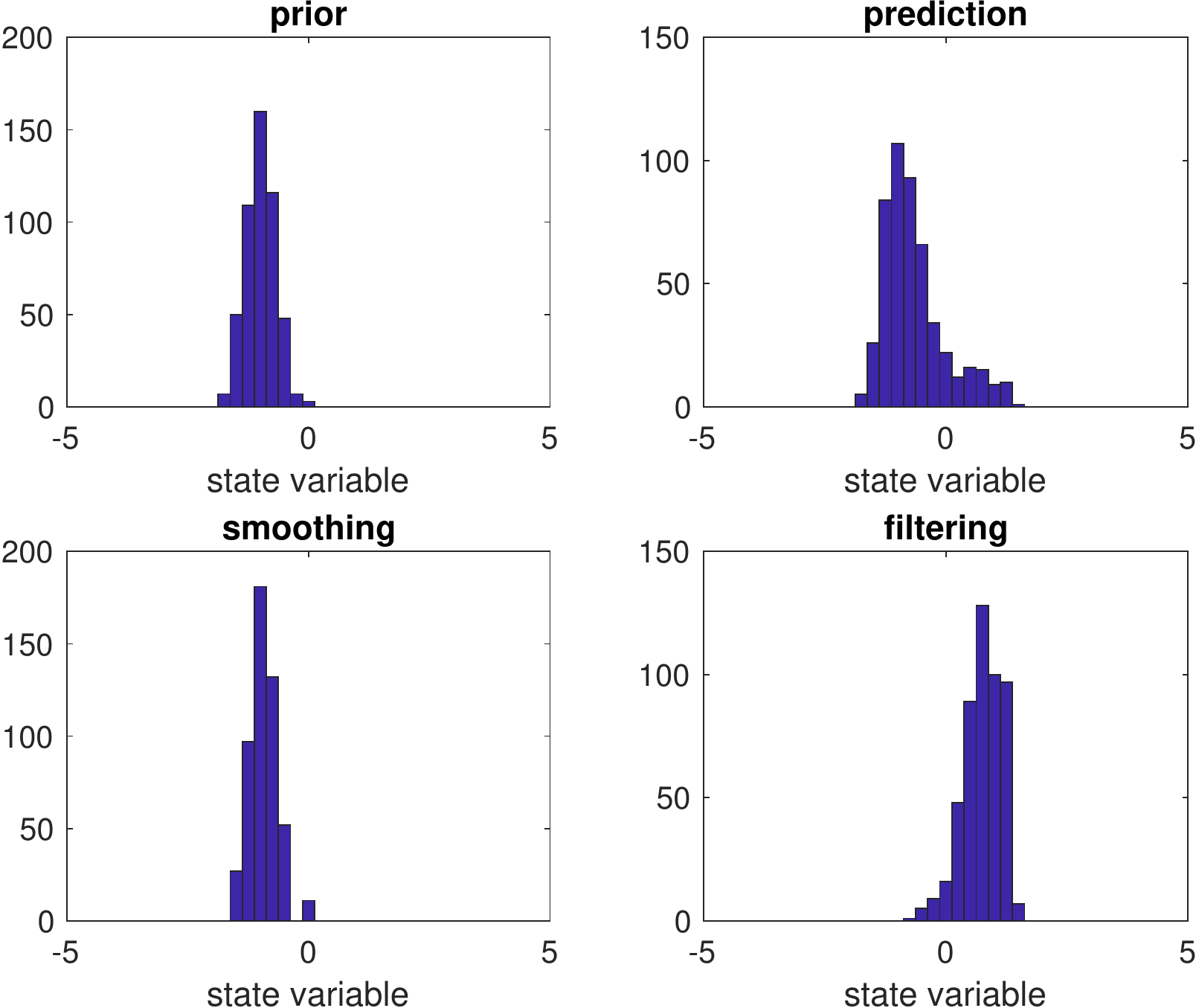}
\end{center}
\caption{Histograms produced from $M=200$ Monte Carlo samples of the initial PDF $\pi_0$, the forecast PDF
$\pi_2$ at time $t=2$, the filtering distribution $\widehat{\pi}_2$ at time $t=2$, 
and the smoothing PDF $\widehat{\pi}_0$ at time $t=0$ for a Brownian particle
moving in a double well potential.}
\label{fig:ex2a}
\end{figure}

\begin{example} We consider scalar-valued motion of a Brownian particle in a bimodal potential, that is,
\begin{equation} \label{eq:ex2a}
\dd Z_t^+ = Z_t^+ \dd t - (Z_t^+)^3 \dd t + \gamma^{1/2} \dd W_t^+
\end{equation}
with $\gamma = 0.5$ and initial distribution $Z_0 \sim {\rm N}(-1,0.3)$.  At time $t=2$ we measure the location
$y=1$ with measurement error variance $R = 0.2$. We simulate the dynamics using $M=200$ particles and
a time-step of $\Delta t = 0.01$ in the Euler--Maruyama discretisation \R{eq:EMM1}. One can find histograms produced
from the Monte Carlo samples in Figure \ref{fig:ex2a}. The samples from the filtering and smoothing distributions are obtained
by resampling with replacement from the weighted distributions with weights given by \R{eq:importance_weights_filtering1}.
Next we compute \R{eq:SDE_transition} from the $M=200$ Monte Carlo samples of \R{eq:ex2a}. Eleven out of the $200$
transition kernels from $\pi_0$ to $\pi_2$ (prediction problem) and $\pi_0$ to $\widehat{\pi}_2$ (Schr\"odinger problem) 
are displayed in Figure \ref{fig:ex2b}.
\end{example}

\begin{figure}
\begin{center}
\includegraphics[width=0.45\textwidth]{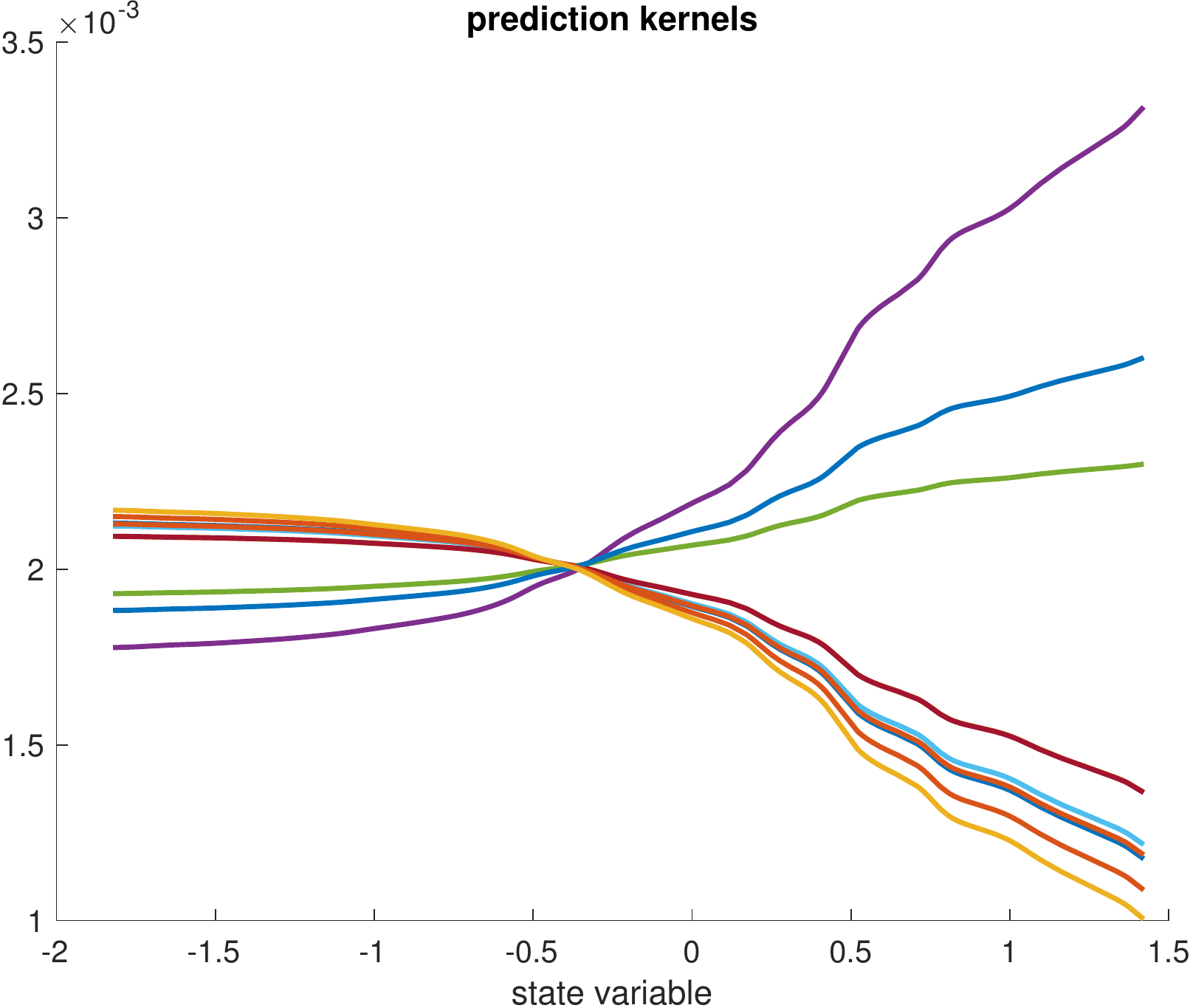}
$\quad$
\includegraphics[width=0.45\textwidth]{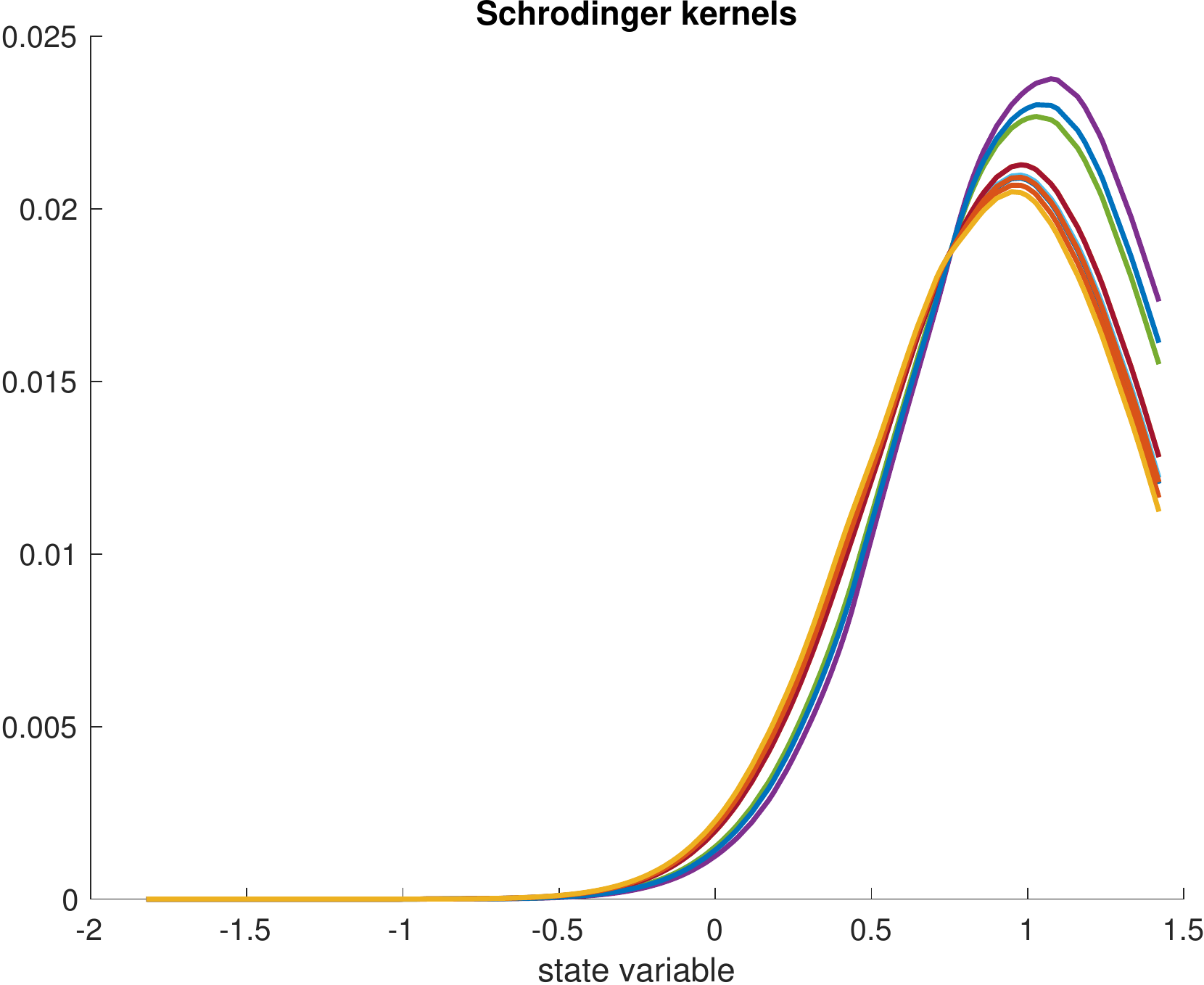}
\end{center}
\caption{Left panel: Approximations of typical transition kernels from time $\pi_0$ to $\pi_2$ under the Brownian dynamics 
model \R{eq:ex2a}. Right panel: Approximations of typical Schr\"odinger transition kernels from $\pi_0$ to $\widehat{\pi}_2$.
All approximations were computed using the Sinkhorn algorithm and by linear interpolation between the $M=200$ data points.}
\label{fig:ex2b}
\end{figure}

\medskip

\noindent
The Sinkhorn approach requires relatively large sample sizes $M$ in order to
lead to useful approximations. Alternatively we may
assume that there is an approximative control term $u^{(0)}_t$ with associated forward SDE
\begin{equation} \label{eq:C-SDE0}
\dd {Z}_t^+ = f_t({Z}_t)\,\dd t + 
u_t^{(0)}({Z}_t^+)\,\dd t + \gamma^{1/2} \,\dd W_t^+, \qquad t\in [0,1] ,
\end{equation}
and $Z_0^+ \sim \pi_0$. We denote the associated path measure by $\mathbb{Q}^{(0)}$. Girsanov's
theorem implies that the Radon--Nikodym derivative of $\mathbb{Q}$ with respect to $\mathbb{Q}^{(0)}$ is 
given by (compare \R{eq:RND}) 
\begin{equation*}
\frac{\dd \mathbb{Q}}{\dd \mathbb{Q}^{(0)}}_{|z_{[0,1]}^{(0)}} = \exp (-V^{(0)}) ,
\end{equation*}
where $V^{(0)}$ is defined via the stochastic integral
\begin{equation*}
V^{(0)} := \frac{1}{2\gamma} \int_0^1 \left(\|u_t^{(0)}\|^2 \dd t +  2\gamma^{1/2} u_t^{(0)}\cdot \dd W_t^+   \right)
\end{equation*}
along solution paths $z_{[0,1]}^{(0)}$ of \R{eq:C-SDE0}.
Because of 
\begin{equation*}
\frac{\dd \widehat{\mathbb{P}}}{\dd \mathbb{Q}^{(0)}}_{|z_{[0,1]}^{(0)}} = 
\frac{\dd \widehat{\mathbb{P}}}{\dd \mathbb{Q}}_{|z_{[0,1]}^{(0)}} \,
\frac{\dd \mathbb{Q}}{\dd \mathbb{Q}^{(0)}}_{|z_{[0,1]}^{(0)}}  \propto l(z_1^{(0)}) \,\exp(-V^{(0)})\,,
\end{equation*}
we can now use (\ref{eq:C-SDE0}) to importance-sample from the filtering PDF $\widehat{\pi}_1$. The control $u_t^{(0)}$
should be chosen such that the variance in the modified likelihood function
\begin{equation} \label{eq:modified_l}
l^{(0)}(z_{[0,1]}^{(0)}) := l(z_1^{(0)}) \,\exp (-V^{(0)})
\end{equation}
is reduced compared to the uncontrolled case $u_t^{(0)} \equiv 0$. In particular, the filter distribution $\widehat{\pi}_1$ 
at time $t=1$ satisfies 
\begin{equation*}
\widehat{\pi}_1(z_1^{(0)}) \propto l^{(0)}(z_{[0,1]}^{(0)})\,\pi_1^{(0)}(z_1^{(0)}),
\end{equation*}
where $\pi_t^{(0)}$, $t\in (0,1]$, denote the marginal PDFs generated by \R{eq:C-SDE0}.

We now describe an iterative algorithm for the associated Schr\"odinger problem in the spirit of the 
Sinkhorn iteration from Section \ref{sec:SP_DM}. 

\medskip

\begin{lemma} The desired optimal control law can be computed iteratively,
\begin{equation} \label{eq:iterated_control}
u^{(k+1)}_t = u^{(k)}_t + \gamma \nabla_z \log \psi_t^{(k)}, \qquad k=0,1,\ldots ,
\end{equation}
for given $u_t^{(k)}$ and the potential $\psi_t^{(k)}$ obtained as 
the solutions to the backward Kolmogorov equation
\begin{equation}\label{eq:iter_SP1}
\partial_t \psi_t^{(k)} = -{\cal L}^{(k)}_t \psi_t^{(k)}, \qquad {\cal L}^{(k)}_t g :=
\nabla_z g \cdot (f_t + u^{(k)}_t) + \Frac{\gamma}{2} \Delta_z g ,
\end{equation}
with final time condition
\begin{equation} \label{eq:IRF}
\psi_1^{(k)}(z):= \frac{\widehat{\pi}_1(z)}{\pi_1^{(k)}(z)} .
\end{equation}
Here $\pi_1^{(k)}$ denotes the time-one marginal of the path measure $\mathbb{Q}^{(k)}$ induced 
by \R{eq:Forward-SDE2} with control term $u_t = u^{(k)}_t$ and initial PDF $\pi_0$. 
The recursion \R{eq:iterated_control} is stopped whenever the final time condition (\ref{eq:IRF}) 
is sufficiently close to a constant function.
\end{lemma}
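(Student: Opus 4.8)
The plan is to recognise the recursion \R{eq:iterated_control}--\R{eq:IRF} as the path-space counterpart of the Sinkhorn / iterative proportional fitting procedure of Lemma \ref{lemma_Sinkhorn}, applied to the Schr\"odinger problem of bridging $\pi_0$ at $t=0$ with $\widehat{\pi}_1$ at $t=1$ relative to the reference path measure $\mathbb{Q}^{(0)}$ generated by \R{eq:C-SDE0} (which is $\mathbb{Q}$ of \R{eq:Forward-SDE} when $u^{(0)}\equiv 0$). The argument then splits into: (i) the iteration is well defined; (ii) one step equals a Doob $h$-transform, equivalently a pair of Kullback--Leibler projections; (iii) the fixed points are exactly the Schr\"odinger bridge; (iv) convergence.

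For (i)--(ii): $\pi_t^{(k)}$ is by construction the marginal of the controlled diffusion with drift $f_t+u_t^{(k)}$ started from $\pi_0$, and $\psi_t^{(k)}$ solves the backward Kolmogorov equation \R{eq:iter_SP1} with terminal data $\psi_1^{(k)}=\widehat{\pi}_1/\pi_1^{(k)}$; under the standing nondegeneracy of the diffusion and mild integrability of $\widehat{\pi}_1/\pi_1^{(k)}$ this $\psi_t^{(k)}$ exists, is strictly positive and $C^1$ in $z$, so $\nabla_z\log\psi_t^{(k)}$ and the update \R{eq:iterated_control} are well defined. First I would invoke the standard $h$-transform computation: for a diffusion with diffusion matrix $\gamma I$ and drift $b_t$, adding $\gamma\nabla_z\log h_t$ to $b_t$, with $h_t$ space-time harmonic for the corresponding generator, produces the Doob $h$-transform. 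Taking $h_t=\psi_t^{(k)}$ and $b_t=f_t+u_t^{(k)}$, Lemma \ref{twisting_SDEs} (applied with $f_t$ replaced by $f_t+u_t^{(k)}$ and twisting function $\psi_1^{(k)}$) identifies the time-one kernel of the updated SDE as $q_+^{(k+1)}(z_1|z_0)=\psi_1^{(k)}(z_1)\,q_+^{(k)}(z_1|z_0)\,\psi_0^{(k)}(z_0)^{-1}$, and this $h$-transform leaves the $t=0$ marginal equal to $\pi_0$ because $\mathbb{E}[\psi_1^{(k)}(Z_1)\mid Z_0=z_0]=\psi_0^{(k)}(z_0)$; this is exactly the process $\mathbb{Q}^{(k+1)}$ of the lemma. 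Equivalently, the step is the composition of two $I$-projections: multiplying $\mathbb{Q}^{(k)}$ by $\psi_1^{(k)}(z_1)=\widehat{\pi}_1(z_1)/\pi_1^{(k)}(z_1)$ is the $I$-projection onto $\{\widetilde{\pi}_1=\widehat{\pi}_1\}$ and fixes the terminal marginal while moving the initial marginal to $\psi_0^{(k)}\pi_0$ (a density, since $\int\psi_0^{(k)}(z_0)\pi_0(z_0)\,\dd z_0=\int\widehat{\pi}_1(z_1)\,\dd z_1=1$ by Fubini); multiplying next by $1/\psi_0^{(k)}(z_0)$ is the $I$-projection onto $\{\widetilde{\pi}_0=\pi_0\}$, which leaves the drift unchanged (it depends on $z_0$ only) and restores the initial marginal. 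This is one sweep of iterative proportional fitting, exactly as in Remark \ref{rem:Var_S} for discrete measures, and it preserves the product form $\dd\mathbb{Q}^{(k)}/\dd\mathbb{Q}^{(0)}=g_k(z_0)\,h_k(z_1)$.

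For (iii)--(iv): at a fixed point $\nabla_z\log\psi_t^{(k)}\equiv 0$, so $\psi_1^{(k)}=\widehat{\pi}_1/\pi_1^{(k)}$ is constant; since both are probability densities this forces $\pi_1^{(k)}=\widehat{\pi}_1$, i.e.\ the controlled SDE started from $\pi_0$ already transports it to $\widehat{\pi}_1$, while $\dd\mathbb{Q}^{(k)}/\dd\mathbb{Q}^{(0)}$ is of product form; by the variational characterisation of the Schr\"odinger path measure as the unique minimiser of ${\rm KL}(\,\cdot\,||\,\mathbb{Q}^{(0)})$ over path measures with those two marginals, such a fixed point is exactly the Schr\"odinger bridge and $u_t^{(k)}$ the associated optimal control \R{eq:optimal_control_SDE}. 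Conversely $\psi_1^{(k)}$ near a constant means $\pi_1^{(k)}$ near $\widehat{\pi}_1$, so the stopping rule is a genuine convergence test. Convergence itself, and hence $u_t^{(k)}\to u_t$, is the continuous counterpart of \R{eq:convergence_Sinkhorn}, for which I would cite the iterative proportional fitting / Schr\"odinger literature \cite{sr:DP91,sr:L14,sr:CGP14}.

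The hard part is precisely this last step. In the finite-state case of Lemma \ref{lemma_Sinkhorn} convergence rests on a Hilbert-metric contraction on the cone of positive vectors; on path space one must instead ensure that every backward Kolmogorov solve in \R{eq:iter_SP1} returns an admissible potential $\psi_t^{(k)}$ (positive, regular, finite energy) and that the alternating $I$-projections actually converge and not merely decrease the divergence, which requires mild conditions on the reference --- e.g.\ a transition density bounded above and below on compacts together with finite relative entropy of $\pi_0\otimes\widehat{\pi}_1$ with respect to the two-point marginal of $\mathbb{Q}^{(0)}$, as in the classical existence theory for Schr\"odinger bridges --- conditions that hold for the Gaussian-model-error and constant-diffusion examples of this paper. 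A secondary point to record is that a nonzero warm start $u^{(0)}$ changes the reference from $\mathbb{Q}$ to $\mathbb{Q}^{(0)}$, so the iteration converges to the Schr\"odinger bridge relative to $\mathbb{Q}^{(0)}$; this agrees with the bridge relative to $\mathbb{Q}$ precisely when $u^{(0)}$ is itself $\gamma$ times the gradient of the logarithm of a space-time harmonic function for $\mathbb{Q}$, in particular when $u^{(0)}\equiv 0$.
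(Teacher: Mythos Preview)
Your approach is the same as the paper's: both identify the recursion \R{eq:iterated_control}--\R{eq:IRF} as the continuous/path-space version of the Sinkhorn iteration from Section~\ref{sec:SP_DM}. The paper's proof, however, is a single sentence deferring the construction and convergence to \citeasnoun{sr:CGP15}, whereas you spell out the mechanism --- that each sweep is a Doob $h$-transform of $\mathbb{Q}^{(k)}$ followed by resetting the initial marginal to $\pi_0$, equivalently the pair of $I$-projections of iterative proportional fitting --- and you correctly invoke Lemma~\ref{twisting_SDEs} (with $f_t$ replaced by $f_t+u_t^{(k)}$) to identify the updated drift. Your fixed-point argument and the stopping criterion are sound.

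Your closing caveat about the warm start is a genuine point that the paper does not address. Because each iterate satisfies $\dd\mathbb{Q}^{(k)}/\dd\mathbb{Q}^{(0)}=g_k(z_0)\,h_k(z_1)$, the limit is the Schr\"odinger bridge \emph{relative to $\mathbb{Q}^{(0)}$}; this coincides with the bridge relative to the uncontrolled reference $\mathbb{Q}$ only when $\dd\mathbb{Q}^{(0)}/\dd\mathbb{Q}$ is itself of product form in $(z_0,z_1)$, i.e.\ when $u^{(0)}=\gamma\nabla_z\log h_t$ for some $h_t$ space-time harmonic under ${\cal L}_t$ (in particular $u^{(0)}\equiv 0$). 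For arbitrary $u^{(0)}$ the iteration still produces a control transporting $\pi_0$ to $\widehat{\pi}_1$ --- which is all scenario~(C) requires --- but not necessarily the ${\rm KL}(\,\cdot\,\|\mathbb{Q})$-minimiser. This is a sharper reading of the lemma than the paper offers.
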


\begin{proof}
The extension of the Sinkhorn algorithm to continuous PDFs and its convergence has been discussed by \citeasnoun{sr:CGP15}.
\end{proof}

\medskip

\begin{remark} Note that $\psi_t^{(k)}$ needs to be 
determined up to a constant of proportionality only since the associated control law is determined from $\psi_t^{(k)}$ by 
\R{eq:iterated_control}. One can also replace \R{eq:iter_SP1} with any other method for solving
the smoothing problem associated to the SDE \R{eq:Forward-SDE2} with $Z_0^+\sim \pi_0$, 
control law $u_t = u_t^{(k)}$, and likelihood function $l(z) = \psi_1^{(k)}(z)$. 
See Appendix D for a forward--backward SDE formulation in particular. \end{remark}

\medskip

\noindent
We need to restrict the class of possible control laws $u_t^{(k)}$ in order to obtain a computationally feasible implementations
in practice. For example, a simple class of control laws is provided 
by linear controls of the form
\begin{equation*}
u_t^{(k)}(z) = -B_t^{(k)} (z-m_t^{(k)})
\end{equation*}
with appropriately chosen symmetric positive definite matrices $B_t^{(k)}$ and vectors $m_t^{(k)}$. Such approximations
can, for example, be obtained from the smoother extensions of ensemble transform methods mentioned earlier.
See also the recent work by \citeasnoun{sr:KR16} and \citeasnoun{sr:RK17} on numerical methods for 
the SDE smoothing problem.

%
%
%
\section{DA for continuous-time data} \label{sec:mf_ct_DA}
%
%
%
%

In this section, we focus on the continuous-time filtering problem over the time interval $[0,1]$, that is, on the
assimilation of data that arrive continuously in time. If one is only interested in transforming samples from the 
prior distribution at $t=0$ into samples of the filtering distribution at time $t=1$, then all methods from the previous sections 
can be applied once the associated filtering distribution $\widehat{\pi}_1$ is available. However, it is more natural to 
consider the associated filtering distributions $\widehat{\pi}_t$ for all $t \in (0,1]$ and to derive appropriate transformations 
in the form of mean-field equations in continuous time. We distinguish between smooth and 
non-smooth data $y_t$, $t\in [0,1]$.

\subsection{Smooth data}

We start from a forward SDE model \R{eq:Forward-SDE} with associated path
measure $\mathbb{Q}$ over the space of continuous functions ${\cal C}$. 
However, contrary to the previous sections, the likelihood, $l$, is defined along a whole solution
path $z_{[0,1]}$ as follows:
\begin{equation*}
\frac{\dd \widehat{\mathbb{P}}}{\dd \mathbb{Q}}_{|z_{[0,1]}} 
\propto l(z_{[0,1]}), \qquad l(z_{[0,1]}) := \exp \left( -\int_0^1 V_t(z_t)\,\dd t\right)
\end{equation*}
with the assumption that $\mathbb{Q}[l] < \infty$ and $V_t(z)\ge 0$. A specific example of a suitable $V_t$ is provided by
\begin{equation} \label{eq:loglikelihood}
V_t(z) = \frac{1}{2} \|h(z)-y_t\|^2 ,
\end{equation}
where the data function $y_t \in \mathbb{R}$, $t\in [0,1]$, is a smooth function of time and $h(z)$ is a forward operator connecting the
model states to the observations/data. The associated estimation problem has, for example, been addressed by \citeasnoun{sr:M68} 
and \citeasnoun{sr:F97} from an optimal control perspective and has led to what is called the minimum energy estimator. Recall that
the filtering PDF $\widehat{\pi}_1$ is the marginal PDF of $\widehat{\mathbb{P}}$ at time $t=1$.

The associated time-continuous smoothing/filtering problems are based on the time-dependent 
path measures $\widehat{\mathbb{P}}_t$ defined by
\begin{equation*}
\frac{\dd \widehat{\mathbb{P}}_t}{\dd \mathbb{Q}}_{|z_{[0,1]}} 
\propto l(z_{[0,t]}), \qquad l(z_{[0,t]}) := \exp \left( -\int_0^t V_s(z_s)\,\dd s\right)
\end{equation*}
for $t\in (0,1]$. We let  $\widehat{\pi}_t$ denote the marginal PDF of $\widehat{\mathbb{P}}_t$ at time $t$. Note that $\widehat{\pi}_t$
is the filtering PDF, that is, the marginal PDF at time $t$ conditioned on all the data available until time $t$. Also note that $\widehat{\pi}_t$ is
different from the marginal (smoothing) PDF of $\widehat{\mathbb{P}}$ at time $t$.

We now state a modified Fokker--Planck equation which describes the time evolution of the filtering PDFs $\widehat{\pi}_t$. 
\medskip

\begin{lemma}
The marginal distributions $\widehat{\pi}_t$ of $\widehat{\mathbb{P}}_t$ satisfy the modified Fokker--Planck equation
\begin{equation} \label{eq:FP-smooth-data}
\partial_t \widehat{\pi}_t = {\cal L}_t^\dagger \widehat{\pi}_t - \widehat{\pi}_t (V_t - \widehat{\pi}_t[V_t])
\end{equation}
with ${\cal L}_t^\dagger$ defined by \R{eq:FP_operator}. 
\end{lemma}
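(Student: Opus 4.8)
The plan is to represent the filtering density as a ratio $\widehat{\pi}_t = \rho_t/\beta_t$ of an unnormalised Feynman--Kac density $\rho_t$ and the running evidence $\beta_t$, to derive a linear (Zakai-type) equation for $\rho_t$ by an application of It\^o's formula, and then to differentiate the quotient.

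First I would introduce, for bounded test functions $g$, the unnormalised measure
\begin{equation*}
\rho_t[g] := \mathbb{E}_{\mathbb{Q}}\!\left[ g(Z_t^+)\,\exp\!\left(-\int_0^t V_s(Z_s^+)\,\dd s\right)\right],
\end{equation*}
where $Z_{[0,1]}^+ \sim \mathbb{Q}$ solves the forward SDE \R{eq:Forward-SDE}. Since the path functional $l(z_{[0,t]})$ depends only on the restriction $z_{[0,t]}$, the definition of $\widehat{\mathbb{P}}_t$ gives $\widehat{\pi}_t[g] = \rho_t[g]/\beta_t$, where $\beta_t := \rho_t[1] = \mathbb{Q}[l(z_{[0,t]})] \in (0,\infty)$ by the standing assumptions $V_t\ge 0$ and $\mathbb{Q}[l]<\infty$. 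Hence $\widehat{\pi}_t = \rho_t/\beta_t$.

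Next I would differentiate $\rho_t[g]$ for $g$ smooth. Applying It\^o's formula to $M_t := g(Z_t^+)\exp(-\int_0^t V_s(Z_s^+)\,\dd s)$, using that $\exp(-\int_0^t V_s\,\dd s)$ has finite variation (so there is no cross-variation term) and that the stochastic integral $\gamma^{1/2}\,\nabla_z g\cdot \dd W_t^+$ contributes a martingale with zero expectation, one obtains
\begin{equation*}
\frac{\dd}{\dd t}\rho_t[g] = \mathbb{E}_{\mathbb{Q}}\!\left[ \exp\!\left(-\int_0^t V_s\,\dd s\right)\big(({\cal L}_t g)(Z_t^+) - V_t(Z_t^+)\,g(Z_t^+)\big)\right] = \rho_t[{\cal L}_t g] - \rho_t[V_t g] .
\end{equation*}
Integrating by parts (passing to the adjoint \R{eq:FP_operator}), this is the weak form of $\partial_t \rho_t = {\cal L}_t^\dagger \rho_t - V_t\,\rho_t$. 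Taking $g\equiv 1$ and using $\int {\cal L}_t^\dagger \rho_t\,\dd z = 0$ (mass conservation for the Fokker--Planck operator), it follows that $\dot\beta_t = -\rho_t[V_t] = -\beta_t\,\widehat{\pi}_t[V_t]$. The quotient rule then gives
\begin{equation*}
\partial_t \widehat{\pi}_t = \frac{\partial_t \rho_t}{\beta_t} - \widehat{\pi}_t\,\frac{\dot\beta_t}{\beta_t} = {\cal L}_t^\dagger \widehat{\pi}_t - V_t\,\widehat{\pi}_t + \widehat{\pi}_t\,\widehat{\pi}_t[V_t] = {\cal L}_t^\dagger \widehat{\pi}_t - \widehat{\pi}_t\big(V_t - \widehat{\pi}_t[V_t]\big),
\end{equation*}
which is \R{eq:FP-smooth-data}.

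I expect the only genuine work to be the justification of the analytic manipulations: interchanging $\dd/\dd t$ with $\mathbb{E}_{\mathbb{Q}}[\cdot]$, the vanishing of the expectation of the stochastic integral, mass conservation for ${\cal L}_t^\dagger$, and the passage from the weak form back to the stated PDE. These require mild integrability and regularity hypotheses (guaranteed here by $V_t\ge 0$, $\mathbb{Q}[l]<\infty$, and the usual decay assumptions under which the Fokker--Planck equation \R{eq:FPE1} itself is valid), and I would invoke them as standard rather than belabour them.
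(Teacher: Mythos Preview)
Your proof is correct and is, in fact, the standard rigorous derivation via the unnormalised Feynman--Kac (Zakai-type) equation followed by normalisation. The paper, however, does not do this. Its proof is a short heuristic: it sets $\gamma=0$ and $f_t\equiv 0$ (so that ${\cal L}_t^\dagger$ drops out), writes the incremental likelihood update $\widehat{\pi}_{t+\delta t}/\widehat{\pi}_t \propto {\rm e}^{-V_t\delta t}\approx 1-V_t\,\delta t$, and lets $\delta t\to 0$ while enforcing $\widehat{\pi}_t[1]=1$. This isolates the nonlinear correction $-\widehat{\pi}_t(V_t-\widehat{\pi}_t[V_t])$; the ${\cal L}_t^\dagger\widehat{\pi}_t$ term is tacitly restored by linearity once the dynamics are put back. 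Your route treats both contributions at once via It\^o's formula and the quotient rule, and makes explicit what the paper leaves implicit (mass conservation for ${\cal L}_t^\dagger$, the evolution $\dot\beta_t=-\beta_t\,\widehat{\pi}_t[V_t]$). The paper's argument is quicker and conveys the mechanism; yours is the one you would actually cite or verify against standard filtering references.
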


\begin{proof}
This can be seen by setting $\gamma = 0$ and $f_t \equiv 0$ in
\R{eq:Forward-SDE} for simplicity and by considering the incremental change of measure induced by the likelihood, that is,
\begin{equation*}
\frac{\widehat{\pi}_{t+\delta t}}{\widehat{\pi}_t} \propto  {\rm e}^{-V_t\delta t} \approx 1 - V_t \,\delta t ,
\end{equation*}
and taking the limit $\delta t \to 0$ under the constraint that $\widehat{\pi}_t[1] = 1$ is preserved.
\end{proof}

\medskip

\noindent
We now derive a mean-field interpretation of \R{eq:FP-smooth-data} and rewrite \R{eq:FP-smooth-data} in the form
\begin{equation} \label{eq:FP-smooth-data2}
\partial_t \widehat{\pi}_t = {\cal L}_t^\dagger \widehat{\pi}_t + \nabla_z \cdot(\widehat{\pi}_t \nabla_z \phi_t) ,
\end{equation}
where the potential $\phi_t:\mathbb{R}^{N_z}\to \mathbb{R}$ satisfies the elliptic PDE
\begin{equation} \label{eq:elliptic-smooth-data}
\nabla_z \cdot (\widehat{\pi}_t \nabla_z \phi_t) = - \widehat{\pi}_t (V_t - \widehat{\pi}_t[V_t]) .
\end{equation}

\medskip
\begin{remark} Necessary conditions  for the elliptic PDE \R{eq:elliptic-smooth-data} to be solvable
and to lead to bounded gradients $\nabla_z \phi$ for given $\pi_t$ have been discussed by \citeasnoun{sr:LMMR15}. It is an open problem
to demonstrate that continuous-time data assimilation problems actually satisfy such conditions.
\end{remark}

\medskip

\noindent
With \R{eq:FP-smooth-data2} in place, we formally obtain the mean-field equation
\begin{equation} \label{eq:controlled-SDE-smooth-data}
\dd Z_t^+ = \left\{ f_t(Z_t^+) - \nabla_z \phi_t(Z_t^+)\right\} \dd t + \gamma^{1/2} \,\dd W_t^+ ,
\end{equation} 
and the marginal distributions $\pi_t^u$ of this controlled SDE 
agree with the marginals $\widehat{\pi}_t$ of the path measures $\widehat{\mathbb{P}}_t$ at times $t \in (0,1]$. 

The control $u_t$ is not uniquely determined. For example, one can replace \R{eq:elliptic-smooth-data}
with
\begin{equation} \label{eq:elliptic-smooth-data2}
\nabla_z \cdot (\pi_t M_t \nabla_z \phi_t) = - \pi_t (V_t - \pi_t[V_t]) ,
\end{equation}
where $M_t$ is a symmetric positive definite matrix. More specifically, let us assume that $\pi_t$ is Gaussian with 
mean $\bar{z}_t$ and covariance matrix $P_t^{zz}$ and that $h(z)$ is linear, \ie, $h(z) = Hz$. Then
\R{eq:elliptic-smooth-data2} can be solved analytically for $M_t = P_t^{zz}$ with
\begin{equation*}
\nabla_z \phi_t(z) = \frac{1}{2} H^\T \left( Hz + H\bar{z}_t - 2y_t\right) .
\end{equation*}
The resulting mean-field equation becomes
\begin{equation} \label{eq:controlled-SDE-smooth-data2}
\dd Z_t^+ =  \left\{ f_t(Z_t^+) - \Frac{1}{2} 
P_t^{zz} H^\T (HZ_t^+ + H\bar{z}_t-2y_t) \right\} \dd t + \gamma^{1/2} \,\dd W_t^+ ,
\end{equation} 
which gives rise to the ensemble Kalman--Bucy filter upon Monte Carlo discretization \cite{sr:br11}. See Section
\ref{sec:NM4} below and Appendix C for further details.

\medskip

\begin{remark} \label{remark_opt}
The approach described in this subsection can also be applied to standard Bayesian inference without model dynamics.
More specifically, let us assume that we have samples $z_0^i$, $i=1,\ldots,M$, 
from a prior distribution $\pi_0$ which we would like to transform into
samples from a posterior distribution
\begin{equation*}
\pi^\ast(z) := \frac{l(z)\,\pi_0(z)}{\pi_0[l]}
\end{equation*}
with likelihood $l(z) = \pi(y|z)$. One can introduce a homotopy connecting $\pi_0$ with $\pi^\ast$, for example, via
\begin{equation} \label{eq:homotopy0}
\breve{\pi}_s(z) := \frac{l(z)^s\,\pi_0(z)}{\pi_0[l^s]}
\end{equation}
with $s\in [0,1]$. We find that
\begin{equation} \label{eq:homotopy1}
\frac{\partial \breve{\pi}_s}{\partial s} = \breve{\pi}_s \left( \log l - \breve{\pi}_s[\log l]\right) .
\end{equation}
We now seek a differential equation
\begin{equation} \label{eq:homotopy2}
\frac{\dd}{\dd s} \breve{Z}_s = u_s(\breve{Z}_s)
\end{equation}
with $\breve{Z}_0 \sim \pi_0$ such that its marginal distributions $\breve{\pi}_s$ satisfy
\R{eq:homotopy1} and, in particular $\breve{Z}_1 \sim \pi^\ast$. This condition together with Liouville's equation for the time evolution of
marginal densities under a differential equation \R{eq:homotopy2} leads to
\begin{equation} \label{eq:homotopy3}
-\nabla_z \cdot (\breve{\pi}_s \,u_s) = \breve{\pi}_s \left( \log l - \breve{\pi}_s[\log l]\right) .
\end{equation}
In order to define $u_s$ in \R{eq:homotopy2} uniquely, we make the {\it ansatz}
\begin{equation}\label{eq:homotopy4}
u_s(z) = -\nabla_z \phi_s(z)
\end{equation}
which leads to the elliptic PDE
\begin{equation} \label{eq:homotopy5}
\nabla_z \cdot (\breve{\pi}_s \,\nabla_z \phi_s) = \breve{\pi}_s \left(  \log l - \breve{\pi}_s[\log l]\right)
\end{equation}
in the potential $\phi_s$. The desired samples from ${\pi}^\ast$ are now obtained as the time-one solutions
of \R{eq:homotopy2} with \lq control law\rq \,\R{eq:homotopy4} satisfying \R{eq:homotopy5} and initial conditions
$z_0^i$, $i=1,\ldots,M$. There are many modifications of this basic procedure \cite{sr:daum11,sr:reich10,sr:marzouk11},
some of them leading to explicit expressions for \R{eq:homotopy2} such as Gaussian PDFs 
\cite{sr:br10b} and Gaussian mixture PDFs \cite{sr:reich11}. We finally mention that the limit 
$s\to \infty$ in \R{eq:homotopy0} leads, formally, to the PDF $\breve{\pi}_\infty = \delta (z-z_{\rm ML})$, where $z_{\rm ML}$ denotes
the minimiser of $V(z) = -\log \pi(y|z)$, that is, the maximum likelihood estimator, 
which we assume here to be unique, for example, $V$ is convex. In other words these homotopy methods 
can be used to solve optimisation problems via derivative-free mean-field equations and their interacting particle approximations. 
See, for example, \citeasnoun{sr:ZTM17} and \citeasnoun{sr:SS17} as well as Appendices A \& C for more details.
\end{remark}

\subsection{Random data} \label{sec:random_data}

We now replace \R{eq:loglikelihood} with an observation model of the form
\begin{equation*}
\dd Y_t = h(Z_t^+)\,\dd t +  \dd V_t^+ ,
\end{equation*}
where we set $Y_t \in \mathbb{R}$ for simplicity and $V_t^+$ denotes standard Brownian motion.
The forward operator $h:\mathbb{R}^{N_z} \to \mathbb{R}$ is also assumed to be known. The marginal
PDFs $\widehat{\pi}_t$ for $Z_t$ conditioned on all observations $y_s$ with $s\in [0,t]$ satisfy the Kushner--Stratonovitch 
equation \cite{sr:jazwinski}
\begin{equation} \label{eq:KS}
\dd \widehat{\pi}_t = {\cal L}_t^\dagger \widehat{\pi}_t \,\dd t+ (h-\widehat{\pi}_t[h]) (\dd Y_t - \widehat{\pi}_t[h]\,\dd t)
\end{equation}
with ${\cal L}^\dagger$ defined by \R{eq:FP_operator}. The following observation is important for the subsequent discussion.

\medskip

\begin{remark}
Consider state-dependent diffusion 
\begin{equation} \label{eq:SDE_Strat}
\dd Z_t^+ = \gamma_t (Z_t^+) \circ \dd U_t^+ ,
\end{equation}
in its Stratonovitch interpretation \cite{sr:P14}, where $U_t^+$ is scalar-valued Brownian motion 
and $\gamma_t(z) \in \mathbb{R}^{N_z \times 1}$. Here the Stratonovitch interpretation is to be applied to the
implicit time-dependence of $\gamma_t(z)$ through $Z_t^+$ only, that is, the explicit time-dependence of $\gamma_t$ remains to
be It\^{o}-interpreted. The associated Fokker--Planck equation for the marginal
PDFs $\pi_t$ takes the form 
\begin{equation} \label{eq:FP_Stratonovitch}
\partial_t \pi_t = \frac{1}{2} \nabla_z \cdot ( \gamma_t \nabla_z \cdot (\pi_t \gamma_t ) ) 
\end{equation}
and expectation values $\bar g = \pi_t[g]$ evolve in time according to
\begin{equation} \label{eq:time_evolved_EV}
\pi_t[g] = \pi_0[g] + \int_0^t \pi_s[{\cal A}_tg]\,\dd s
\end{equation}
with operator ${\cal A}_t$ defined by
\begin{equation*}
{\cal A}_t g = \frac{1}{2}  \gamma_t^\T  \nabla_z  (\gamma_t^\T \nabla_z g) .
\end{equation*}
Now consider the mean-field equation
\begin{equation} \label{eq:reformulated_diffusion}
\frac{\dd}{\dd t} \widetilde{Z}_t = -\frac{1}{2} 
\gamma_t (\widetilde{Z}_t) \, J_t\,,\qquad J_t := \widetilde{\pi}_t^{-1} \nabla_z \cdot (\widetilde{\pi}_t \gamma_t) ,
\end{equation}
with $\widetilde{\pi}_t$ the law of $\widetilde{Z}_t$.
The associated Liouville equation is
\begin{equation*} 
\partial_t \widetilde{\pi}_t = \frac{1}{2}  \nabla_z \cdot (\widetilde{\pi}_t \gamma_t J_t) = 
\frac{1}{2} \nabla_z \cdot ( \gamma_t \nabla_z \cdot (\gamma_t \widetilde{\pi}_t) ) .
\end{equation*}
In other words, the marginal PDFs  and the associated expectation values  evolve identically under 
\R{eq:SDE_Strat} and \R{eq:reformulated_diffusion}, respectively.
\end{remark}

\medskip

\noindent
We now state a formulation of the continuous-time filtering problem in terms of appropriate mean-field equations. These
equations follow the framework of the feedback particle filter (FPF) as first introduced by \citeasnoun{sr:meyn13} and theoretically justified 
by \citeasnoun{sr:LMMR15}. See \citeasnoun{sr:crisan10} and \citeasnoun{sr:Xiong11} for an alternative formulation.

\medskip

\begin{lemma}
The mean-field SDE
\begin{equation} \label{eq:FPF1}
\dd Z_t^+ = f_t(Z_t^+)\,\dd t + \gamma^{1/2} \,\dd W_t^+ - K_t(Z_t^+) \circ \dd I_t
\end{equation}
with 
\begin{equation*}
\dd I_t := h(Z_t^+)\,\dd t - \dd Y_t + \dd U_t^+ , 
\end{equation*}
$U_t^+$ standard Brownian motion, and $K_t := \nabla_z \phi_t$, where the potential $\phi_t$ satisfies the elliptic PDE
\begin{equation} \label{eq:elliptic_FPF}
\nabla_z \cdot (\pi_t \nabla_z \phi_t) = -\pi_t (h-\pi_t[h]) ,
\end{equation}
leads to the same evolution of its conditional marginal distributions $\pi_t$ as \R{eq:KS}. 
\end{lemma}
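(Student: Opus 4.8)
The plan is to test against an arbitrary $\varphi\in C_c^\infty(\mathbb{R}^{N_z})$ and to show that the conditional expectations $\pi_t[\varphi]:=\mathbb{E}[\varphi(Z_t^+)\mid\mathcal{Y}_t]$, where $\mathcal{Y}_t:=\sigma\{Y_s:0\le s\le t\}$, obey the weak form of the Kushner--Stratonovitch equation \R{eq:KS},
\begin{equation*}
\dd\,\pi_t[\varphi] = \pi_t[{\cal L}_t\varphi]\,\dd t + \bigl(\pi_t[\varphi\,h]-\pi_t[\varphi]\,\pi_t[h]\bigr)\bigl(\dd Y_t-\pi_t[h]\,\dd t\bigr),
\end{equation*}
where $\pi_t[{\cal L}_t\varphi]\,\dd t$ is the weak form of ${\cal L}_t^\dagger\widehat{\pi}_t\,\dd t$ from \R{eq:KS}. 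Since both measure-valued processes start from the same deterministic law $\pi_0$, establishing this identity for all $\varphi$ proves the lemma.

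First I would put \R{eq:FPF1} in It\^o form. With $K_t=\nabla_z\phi_t$, and noting that the only martingale part of $\dd I_t$ is $\dd U_t^+$ while the only $\dd U_t^+$-contribution to $\dd Z_t^+$ is $-K_t(Z_t^+)\,\dd U_t^+$, the It\^o--Stratonovitch correction is $-\tfrac{1}{2}\,\dd[\,K_t(Z_t^+),I_t\,]=\tfrac{1}{2}\,(\nabla_z K_t)K_t\,(Z_t^+)\,\dd t$, so that
\begin{equation*}
\dd Z_t^+ = \Bigl(f_t - K_t\,h + \tfrac{1}{2}(\nabla_z K_t)K_t\Bigr)(Z_t^+)\,\dd t + \gamma^{1/2}\,\dd W_t^+ - K_t(Z_t^+)\,\dd U_t^+ + K_t(Z_t^+)\,\dd Y_t .
\end{equation*}
Applying It\^o's formula to $\varphi(Z_t^+)$ then produces a $\dd Y_t$-martingale with coefficient $\nabla\varphi\cdot K_t$, a $\dd W_t^+$- and a $\dd U_t^+$-martingale, and a $\dd t$-term made up of ${\cal L}_t\varphi$, the cross term $-\nabla\varphi\cdot K_t\,h$, the correction $\tfrac{1}{2}\nabla\varphi\cdot(\nabla_z K_t)K_t$, and the second-order contribution $K_t^\T(\nabla_z^2\varphi)K_t$ coming from the quadratic variations of the $\dd U_t^+$- and $\dd Y_t$-parts. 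The role of the elliptic equation \R{eq:elliptic_FPF} is to identify the gain: integrating by parts,
\begin{equation*}
\pi_t[\nabla\varphi\cdot K_t] = \int \nabla\varphi\cdot\nabla_z\phi_t\,\pi_t\,\dd z = -\int \varphi\,\nabla_z\cdot(\pi_t\nabla_z\phi_t)\,\dd z = \int \varphi\,\pi_t\,(h-\pi_t[h])\,\dd z = \pi_t[\varphi\,h]-\pi_t[\varphi]\,\pi_t[h],
\end{equation*}
which is exactly the factor multiplying $\dd Y_t$ in \R{eq:KS}. It then remains to take $\mathbb{E}[\,\cdot\mid\mathcal{Y}_t]$ and to verify that the $\dd W_t^+$- and $\dd U_t^+$-martingales drop out and that the surviving $\dd t$-terms, together with the correction produced when the $\dd Y_t$-integral is conditioned (via the innovations/Kallianpur--Striebel representation, which replaces $\dd Y_t$ by $\pi_t[h]\,\dd t$ plus an innovation martingale), collapse to $-\bigl(\pi_t[\varphi h]-\pi_t[\varphi]\pi_t[h]\bigr)\pi_t[h]\,\dd t$, reproducing the drift of \R{eq:KS}.

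I expect this last step to be the main obstacle, since it is the genuine nonlinear-filtering computation: one must control the conditional expectation of stochastic integrals whose integrands depend on $Z_t^+$ (hence on $U_t^+$ and on the past of $Y$), and show that the It\^o--Stratonovitch correction $\tfrac{1}{2}(\nabla_z K_t)K_t$, the second-order term $K_t^\T(\nabla_z^2\varphi)K_t$, and the conditioning correction from $\dd Y_t$ cancel up to the innovation structure $\dd Y_t-\pi_t[h]\,\dd t$. This is precisely what is carried out rigorously by \citeasnoun{sr:LMMR15}, building on \citeasnoun{sr:meyn13}. A lighter and perhaps more transparent route is a consistency (exactness) check: assume a priori that the conditional law of $Z_t^+$ given $\mathcal{Y}_t$ is the filter $\widehat{\pi}_t$ solving \R{eq:KS}, build $K_t$ from $\widehat{\pi}_t$ via \R{eq:elliptic_FPF}, and verify that \R{eq:FPF1} propagates this property in time; this isolates the integration-by-parts identity above as the only genuinely new ingredient and delegates the filtering bookkeeping to the standard derivation of the Kushner--Stratonovitch equation.
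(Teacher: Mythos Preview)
Your approach is different from the paper's and carries a computational error that would prevent the cancellations you anticipate from working out.

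\textbf{The paper's route.} The paper does not convert to It\^o form or test against $\varphi$. Instead it works directly at the density level: setting $f_t\equiv 0$ and $\gamma=0$ for simplicity, it invokes the Stratonovitch Fokker--Planck identity from the Remark immediately preceding the lemma (equation \R{eq:FP_Stratonovitch}) to write the evolution of the conditional density $\pi_t$ as
\[
\dd\pi_t = \nabla_z\!\cdot\!\bigl(\pi_t K_t(h\,\dd t-\dd Y_t)\bigr) + \nabla_z\!\cdot\!\bigl(K_t\,\nabla_z\!\cdot(\pi_t K_t)\bigr)\,\dd t,
\]
and then applies the elliptic identity $\nabla_z\!\cdot(\pi_t K_t)=-\pi_t(h-\pi_t[h])$ \emph{twice}: once to the transport term after splitting off $\pi_t[h]\,\dd t$, and once inside the second-order term, which collapses it to the remaining piece needed to reproduce \R{eq:KS}. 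No conditioning arguments or innovations representation are needed; the whole proof is three lines of density manipulation.

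\textbf{Your error.} You assert that ``the only martingale part of $\dd I_t$ is $\dd U_t^+$'' and hence obtain a Stratonovitch correction $\tfrac{1}{2}(\nabla_z K_t)K_t$. But $\dd Y_t$ also carries a Brownian component, and you yourself use its quadratic variation two sentences later when listing the second-order It\^o term $K_t^\T(\nabla_z^2\varphi)K_t$. This is inconsistent: if $\dd Y_t$ contributes quadratic variation, it contributes to the Stratonovitch--It\^o correction as well. The correct correction is $(\nabla_z K_t)K_t$, not half of it---the paper makes exactly this point in the sentence following \R{eq:FPP_proof3}: ``both $Y_t$ and $U_t^+$ contributed to the diffusion-induced final term\ldots\ and hence the factor $1/2$ in \R{eq:FP_Stratonovitch} is replaced by one.'' With the wrong factor your anticipated cancellation between the correction, the second-order term, and the conditioning residual will be off by a term of the form $\tfrac{1}{2}\pi_t[\nabla\varphi\cdot(\nabla_z K_t)K_t]$, which does not vanish.

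\textbf{Comparison.} Your weak-form route via It\^o's formula and $\mathbb{E}[\cdot\mid\mathcal{Y}_t]$ is in principle sound and is indeed what \citeasnoun{sr:LMMR15} carry out rigorously; it buys you a genuine derivation rather than a formal one. The paper's density-level argument is much shorter precisely because staying in Stratonovitch form and using \R{eq:FP_Stratonovitch} sidesteps the It\^o conversion entirely, and because working with $\pi_t$ rather than $\pi_t[\varphi]$ lets one apply the elliptic PDE \R{eq:elliptic_FPF} as a pointwise identity rather than through integration by parts. If you want to pursue your route, fix the correction factor and note that the second application of \R{eq:elliptic_FPF}---to the term $\pi_t\bigl[\nabla\varphi\cdot(\nabla_z K_t)K_t\bigr]+\pi_t\bigl[K_t^\T(\nabla_z^2\varphi)K_t\bigr]$---is what produces $-\pi_t[\nabla\varphi\cdot K_t\,(h-\pi_t[h])]$ and closes the computation.
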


\medskip

\begin{proof}
We set $\gamma = 0$ and $f_t \equiv 0$ in \R{eq:FPF1} for
simplicity. Then, following \R{eq:FP_Stratonovitch} with $\gamma_t = K_t$, 
the Fokker-Planck equation for the marginal distributions $\pi_t$ of \R{eq:FPF1} conditioned on
$\{Y_s\}_{s\in [0,t]}$ is given by
\begin{align} \label{eq:FPP_proof1}
\dd \pi_t &= \nabla_z \cdot( \pi_t K_t(h(z)\,\dd t-\dd Y_t)) + \nabla_z \cdot ( K_t \nabla_z \cdot (\pi_t K_t))\,\dd t\\ 
\nonumber
&=  (\pi_t[h]\,\dd t- \dd Y_t)\,\nabla_z \cdot (\pi_t K_t)  + \nabla_z \cdot( \pi_t K_t(h(z)-\pi_t[h])\,\dd t \\ \label{eq:FPP_proof2}
& \qquad \qquad + \,\,\nabla_z \cdot 
( K_t \nabla_z \cdot (\pi_t K_t))\,\dd t\\
\label{eq:FPP_proof3}
&= \pi_t (h-\pi_t[h]) (\dd Y_t - \pi_t[h]\,\dd t) ,
\end{align}
as desired, where we have used \R{eq:elliptic_FPF} twice to get from \R{eq:FPP_proof2} to
\R{eq:FPP_proof3}. Also note that both $Y_t$ and $U_t^+$ contributed to the diffusion-induced final term in 
\R{eq:FPP_proof1} and hence the factor $1/2$ in \R{eq:FP_Stratonovitch} is replaced by one.
\end{proof}

\medskip

\begin{remark}
Using the reformulation \R{eq:reformulated_diffusion} of \R{eq:SDE_Strat} 
in Stratonovitch form with $\gamma_t = K_t$ together
with \R{eq:elliptic_FPF},  one can replace $K_t \circ {\rm d}U_t^+$ with
$\Frac{1}{2} K_t (\pi_t[h] - h)\,\dd t$, which leads to the alternative 
\begin{equation*}
\dd I_t = \frac{1}{2} (h+\pi_t[h])\,\dd t - \dd Y_t
\end{equation*}
for the innovation $I_t$, as originally proposed by \citeasnoun{sr:meyn13} in their FPF formulation. We also note
that the feedback particle formulation (\ref{eq:FPF1}) can be extended to systems for which the measurement and model errors 
are correlated. See \citeasnoun{sr:NRR19} for more details.
\end{remark}

\medskip

\noindent
The ensemble Kalman--Bucy filter
\cite{sr:br11} with the Kalman gain factor $K_t$ being independent of the state variable
$z$ and of the form
\begin{equation} \label{eq:gain_KBF}
K_t = P_t^{zh}
\end{equation}
can be viewed as a special case of an FPF. Here $P_t^{zh}$ denotes the covariance matrix 
between $Z_t$ and $h(Z_t)$ at time $t$.

\section{Numerical methods} \label{sec:NM4} 

In this section, we discuss some numerical implementations of the mean field approach to continuous-time
data assimilation. An introduction to standard particle filter implementations can, for example, be found in 
\citeasnoun{sr:crisan}. We start with the continuous-time formulation of the ensemble Kalman filter and state 
a numerical implementation of the FPF using a Schr\"odinger formulation in the second part of this section. 
See also Appendix A for some more details on a particle-based solution of the elliptic PDEs
\R{eq:elliptic-smooth-data}, \R{eq:homotopy5}, and \R{eq:elliptic_FPF}, respectively.

\subsection{Ensemble Kalman--Bucy filter} \label{sec:EnKBF}
Let us start with the ensemble Kalman--Bucy filter (EnKBF), which arises naturally from the mean-field 
equations \R{eq:controlled-SDE-smooth-data2} and \R{eq:FPF1}, respectively, with Kalman gain
\R{eq:gain_KBF} \cite{sr:br11}. We state the EnKBF here in the form
\begin{equation} \label{eq:EnKBF2}
{\rm d}Z_t^i = f_t(Z_t^i)\dd t + \gamma^{1/2}\dd W_t^+ - K_t^M \,\dd I_t^i
\end{equation}
for $i=1,\ldots,M$ and
\begin{equation*}
K_t^M := \frac{1}{M-1} \sum_{i=1}^M Z_t^i (h(Z_t^i) - \bar h^M_t)^\T, \qquad 
\bar h_t^M := \frac{1}{M} \sum_{i=1}^M h(Z_t^i) .
\end{equation*}
The innovations $\dd I_t^i$ take different forms depending on whether the data are smooth in time, that is,
\begin{equation*}
\dd I_t^i = \frac{1}{2} \left( h(Z_t^i) + \bar h_t^M - 2y_t\right) \dd t ,
\end{equation*}
or contains stochastic contributions, that is,
\begin{align}
\dd I_t^i &= \frac{1}{2} \left( h(Z_t^i) + \bar h_t^M \right) \dd t - \dd y_t ,
\end{align}
or, alternatively,
\begin{equation*}
\dd I_t^i = h(Z_t^i) \dd t + \dd U_t^i - \dd y_t ,
\end{equation*}
where $U_t^i$ denotes standard Brownian motion. The SDEs \R{eq:EnKBF2} can be discretised in time by
any suitable time-stepping method such as the Euler--Maruyama scheme \cite{sr:Kloeden}. However, one has
to be careful with the choice of the step-size $\Delta t$ due to potentially stiff contributions from $K_t^M \,\dd I_t^i$.
See, for example, \citeasnoun{sr:akir11} and \citeasnoun{sr:BSW18}.

\medskip

\begin{remark}
It is of broad interest to study the stability and accuracy of interacting particle filter algorithms such as the discrete-time
EnKF and the continuous-time EnKBF for fixed particle numbers $M$. On the negative side, it has been shown by 
\citeasnoun{sr:KMT15} that such algorithms can undergo finite-time instabilities while it has also been demonstrated 
\cite{sr:hunt13,sr:KellyEtAl14,sr:majda15,sr:dWRS18} that such algorithms can be stable and accurate under appropriate 
conditions on the dynamics and measurement process. Asymptotic properties of the EnKF and EnKBF in the limit of
$M\to \infty$ have also been studied, for example, by \citeasnoun{sr:legland11}, \citeasnoun{sr:KM15}, 
and \citeasnoun{sr:dWRS18}.
\end{remark}

\subsection{Feedback particle filter}

A Monte Carlo implementation of the FPF \R{eq:FPF1} faces two main obstacles. First, one needs to approximate
the potential $\phi_t$ in \R{eq:elliptic_FPF} with the density $\pi_t$, which is only available in terms of an empirical measure
\begin{equation*}
\pi_t(z) = \frac{1}{M} \sum_{i=1}^M \delta (z-z_t^i) .
\end{equation*}
Several possible approximations have been discussed by \citeasnoun{sr:TM16} and \citeasnoun{sr:TdWMR17}.
Here we would like to mention in particular an approximation based on diffusion maps which we summarise in Appendix A.
Second, one needs to apply suitable time-stepping methods for the SDE \R{eq:FPF1} in Stratonovitch form. Here we 
suggest using the Euler--Heun method \cite{sr:BBT04}
\begin{align*}
\tilde z_{n+1}^i &= z_n^i + \Delta t f_{t_n}(z_n^i) + (\gamma \Delta t)^{1/2} \xi_n^i - K_n(z_n^i) \Delta I_n^i,\\ \nonumber
z_{n+1}^i &= z_n^i + \Delta t  f_{t_n}(z_n^i)  + 
(\gamma \Delta t)^{1/2} \xi_n^i  -\frac{1}{2} \left( K_{n}(z_n^i) + K_{n}(\tilde z_{n+1}^i) \right)  \Delta I_n^i,
\end{align*}
$i=1,\ldots,M$, with, for example, 
\begin{equation*}
\Delta I_n^i = \frac{1}{2}\left( h(z_n^i) + \bar h_n^M\right)\Delta t - \Delta y_n.
\end{equation*}

While the above implementation of the FPF requires one to solve the elliptic PDE \R{eq:elliptic_FPF} twice per time-step
we now suggest a time-stepping approach in terms of an associated Schr\"odinger problem. Let us assume that
we have $M$ equally weighted particles $z_n^i$ representing the conditional filtering distribution
at time $t_n$. We first propagate these particles forward under the drift term alone, that is,
\begin{equation*}
\hat z^i_{n+1} := z_n ^i + \Delta t\,f_{t_n}(z_n^i),\qquad i=1,\ldots,M .
\end{equation*}
In the next step, we draw $L = KM$ with $K\ge 1$ samples $\widetilde{z}^l_{n+1}$ from the forecast PDF
\begin{equation*}
\widetilde{\pi}(z) := \frac{1}{M} \sum_{i=1}^M {\rm n}(z;\hat z^i_{n+1},\gamma \Delta t I)
\end{equation*}
and assign importance weights
\begin{equation*} 
w^l_{n+1} \propto \exp \left( -\frac{\Delta t}{2}(h(\widetilde{z}_{n+1}^l))^2 + \Delta y_n h(\widetilde{z}^l_{n+1})\right)
\end{equation*}
with normalisation \R{eq:IW_normalised}. Recall that we assumed that $y_t \in \mathbb{R}$ for simplicity 
and $\Delta y_n := y_{t_{n+1}}-y_{t_n}$. We then solve the Schr\"odinger problem 
\begin{equation}\label{eq:FPF4}
P^\ast = \arg \min_{P\in \Pi_{\rm M}} {\rm KL}(P||Q)
\end{equation}
with the entries of $Q\in \mathbb{R}^{L\times M}$ given by
\begin{equation*}
q_{li} = \exp \left(-\frac{1}{2\gamma \Delta t} \|\widetilde{z}^l_{n+1} - \hat z^i_{n+1}\|^2 \right)
\end{equation*}
and the set $\Pi_{\rm M}$ defined by \R{eq:SS_cond3}. The desired particles $z_{n+1}^i = Z_{n+1}^i(\omega)$ are finally given as realisations of
\begin{equation} \label{eq:FPF5}
Z_{n+1}^i = \sum_{l=1}^L \widetilde{z}^l_{n+1} p^\ast_{li} + (\gamma \Delta t)^{1/2} \Xi_n^i\,, \quad \Xi_n^i \sim {\rm N}(0,I) ,
\end{equation}
for $i=1,\ldots,M$. 

The update \R{eq:FPF5}, with $P^\ast$ defined by \R{eq:FPF4}, can be viewed as data-driven drift correction 
combined with a standard approximation to the Brownian diffusion part of the underlying SDE model.  
It remains to be investigated in what sense \R{eq:FPF5} can be viewed as an approximation to the
FPF formulation \R{eq:FPF1} as $M\to \infty$ and $\Delta t \to 0$.

\medskip

\begin{remark}
One can also use the matrix $P^\ast$ from \R{eq:FPF4} to implement a resampling scheme 
\begin{equation} \label{eq:FPF6}
\mathbb{P}[Z_{n+1}^i(\omega) = \widetilde{z}^l_{n+1}] = p^\ast_{li}
\end{equation}
for $i=1,\ldots,M$. Note that, contrary to classical resampling schemes based on weighted particles $(\widetilde z^l_{n+1},w^l_{n+1})$,
$l=1,\ldots,L$, the sampling probabilities $p^\ast_{li}$ take into account the underlying geometry of the forecasts $\hat z^i_{n+1}$ in state
space.
\end{remark}

\begin{figure} \label{fig:figureL63}
\begin{center}
\includegraphics[width = 0.8\textwidth,trim = 0 0 0 0 0,clip]{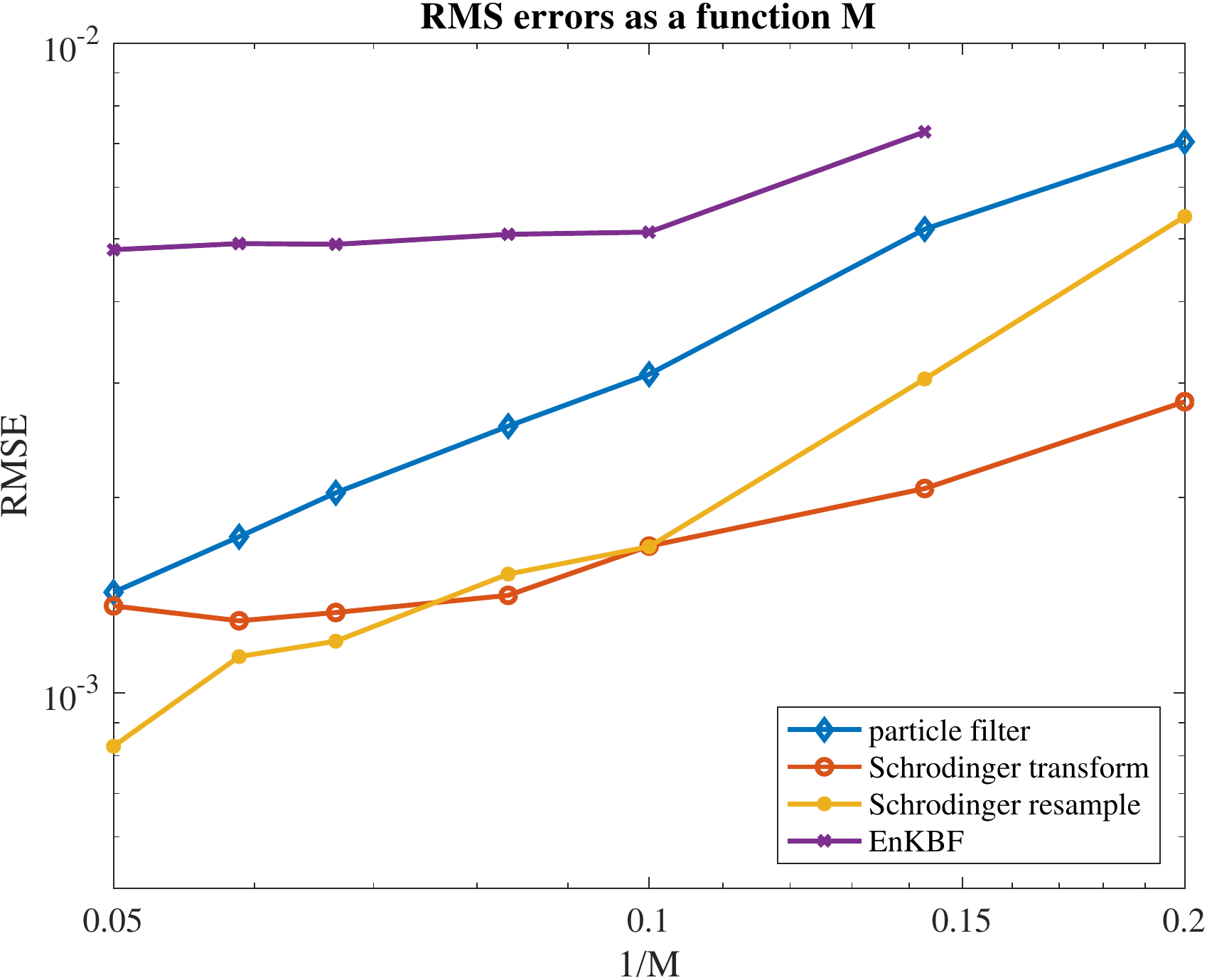}
\end{center}
\caption{RMS errors as a function of sample size, $M$, for a standard particle filter, the EnKBF, and implementations of
\R{eq:FPF5} (Schr\"odinger transform) and\R{eq:FPF6} (Schr\"odinger resample), respectively. Both Schr\"odinger-based
methods outperform the standard particle filter for small ensemble sizes. The EnKBF diverged for the smallest ensemble
size of $M=5$ and performed worse than all other methods for this highly nonlinear problem.}
\end{figure}

\begin{example}
We consider the SDE formulation 
\begin{equation*}
\dd Z_t = f(Z_t)\dd t + \gamma^{1/2} \dd W_t
\end{equation*}
of a stochastically perturbed Lorenz--63 model \cite{sr:lorenz63,sr:reichcotter15,sr:stuart15} with
diffusion constant $\gamma = 0.1$. The system is fully observed according to
\begin{equation*}
\dd Y_t = f(Z_t)\dd t + R^{1/2} \dd V_t
\end{equation*}
with measurement error variance $R=0.1$ and the system is simulated over a time interval $t\in [0,40\,000]$
with step--size $\Delta t = 0.01$. We implemented a standard particle filter with resampling performed after each time-step and
compared the resulting RMS errors with those arising from using \R{eq:FPF5} (Schr\"odinger transform) and
\R{eq:FPF6} (Schr\"odinger resample), respectively. See Figure \ref{fig:figureL63}. It can be seen that the Schr\"odinger-based 
methods outperform the standard particle filter in terms of RMS errors for small ensemble sizes. The Schr\"odinger transform
method is particularly robust for very small ensemble sizes while Schr\"odinger resample performs better at larger sample
sizes. We also implemented the EnKBF \R{eq:EnKBF2} and found that it diverged for the smallest ensemble size of $M=5$
and performed worse than the other methods for larger ensemble sizes.
\end{example}

%
\section{Conclusions}
%

We have summarised sequential data assimilation techniques suitable for state estimation of
discrete- and continuous-time stochastic processes. In addition to algorithmic approaches based on the standard filtering and 
smoothing framework of stochastic analysis, we have drawn a connection to a boundary value problem over joint 
probability measures first formulated by Erwin Schr\"odinger. We have argued that sequential data assimilation
essentially needs to approximate such a boundary value problem with the boundary conditions given by the
filtering distributions at consecutive observation times. 

Application of these techniques to high-dimensional problems arising, for example, from the spatial discretisation of
PDEs requires further approximations in the form of localisation and inflation, which we have not discussed in this
survey. See, for example, \citeasnoun{sr:evensen}, \citeasnoun{sr:reichcotter15}, and \citeasnoun{sr:ABN16} 
for further details. In particular, the localisation framework for particle filters as 
introduced by \citeasnoun{sr:reich15} and \citeasnoun{sr:reichcotter15} in the context of scenario (A) 
could be generalised to scenarios (B) and (C) from Definition \ref{ps:scenarios}.

Finally, the approaches and computational techniques discussed in this paper are also relevant to combined state and
parameter estimation.

\medskip

\noindent
{\bf Acknowledgement.} This research has been partially funded by 
Deutsche Forschungsgemeinschaft (DFG) through grant 
CRC 1294 \lq Data Assimilation\rq . Feedback on earlier versions of this paper
by Nikolas Kantas, Prashant Mehta, and Tim Sullivan have been invaluable.

%
%

\nocite*
\bibliography{bib_sreich}

%
\section{Appendices}
%
%
\subsection*{Appendix A. Mesh--free approximations to Fokker--Planck and backward Kolmogorov equations}
%

In this appendix, we discuss two closely related approximations, first to the Fokker--Planck equation
\R{eq:FPE1} with the (time-independent) operator \R{eq:FP_operator} taking the special form
\begin{equation*}
{\cal L}^\dagger \pi = - \nabla_z \cdot (\pi \,\nabla_z \log \pi^\ast) + \Delta_z \pi = \nabla_z \cdot \left(\pi^\ast \nabla_z \frac{\pi}{\pi^\ast}\right)
\end{equation*}
and, second, to its adjoint operator ${\cal L}$ given by \R{eq:diffusion_map1}.

The approximation to the Fokker--Planck equation \R{eq:FPE1} with drift term 
\begin{equation} \label{eq:sampling_drift}
f_t(z) = \nabla_z \log \pi^\ast (z)
\end{equation}
can be used to transform samples $x_0^i$, $i=1,\ldots,M$ from a (prior) PDF $\pi_0$ into
samples from a target (posterior) PDF $\pi^\ast$ using an evolution equation of the form
\begin{equation} \label{eqA:evolution1}
\frac{\dd}{\dd s} \breve{Z}_s = F_s (\breve{Z}_s) ,
\end{equation}
with $\breve{Z}_0 \sim \breve{\pi}_0 := \pi_0$ such that 
\begin{equation*}
\lim_{s \to \infty} \breve{Z}_s \sim \pi^\ast .
\end{equation*}
The evolution of the marginal PDFs $\breve{\pi}_s$ is given by Liouville's equation
\begin{equation} \label{eq:Liouville2}
\partial_s \breve{\pi}_s = -\nabla_z \cdot (\breve{\pi}_s F_s) .
\end{equation}
We now choose $F_s$ such that the Kullback--Leibler divergence
${\rm KL}\,(\breve{\pi}_s ||\pi^\ast)$ is non-increasing in time, that is,
\begin{equation} \label{eq:KL_decay}
\frac{\dd}{\dd s} {\rm KL}\,(\breve{\pi}_s ||\pi^\ast) = \int \breve{\pi}_s \left\{ F_s \cdot \nabla_z \log \frac{\breve{\pi}_s}{\pi^\ast} \right\}
\,\dd z \le 0 .
\end{equation}
A natural choice is
\begin{equation*}
F_s(z) := -\nabla_z \log \frac{\breve{\pi}_s}{\pi^\ast}(z) ,
\end{equation*}
which renders \R{eq:Liouville2} formally equivalent to the Fokker--Planck equation \R{eq:FPE1} with
drift term \R{eq:sampling_drift} \cite{sr:reichcotter15,sr:PC18}.

Let us now approximate the evolution equation \R{eqA:evolution1} over a reproducing kernel Hilbert space
(RKHS) ${\cal H}$ with kernel $k(z-z')$ and inner product $\langle f,g\rangle_{\cal H}$, which satisfies the reproducing
property
\begin{equation} \label{eq:rkp}
\langle k(\cdot - z'),f\rangle_{\cal H} = f(z') .
\end{equation}
Following \citeasnoun{sr:R90} and \citeasnoun{sr:DM90}, 
we first introduce the approximation
\begin{equation}\label{eqA:PDF}
\widetilde{\pi}_s(z) := \frac{1}{M} \sum_{i=1}^M k(z-z_s^i)
\end{equation}
to the marginal densities $\breve{\pi}_s$. Note that \R{eq:rkp} implies that
\begin{equation*}
\langle f,\widetilde{\pi}_s \rangle_{\cal H} = \frac{1}{M} \sum_{i=1}^M f(z_s^i) .
\end{equation*}
Given some evolution equations 
\begin{equation*}
\frac{\dd}{\dd s} z_s^i = u_s^i 
\end{equation*}
for the particles $z_s^i$, $i=1,\ldots,M$, we find that
\R{eqA:PDF} satisfies Liouville's equation, that is,
\begin{equation*}
\partial_s \widetilde{\pi}_s = -\nabla_z \cdot(\widetilde{\pi}_s \widetilde{F}_s)
\end{equation*}
with
\begin{equation*}
\widetilde{F}_s (z) = \frac{\sum_{i=1}^M k(z-z_s^i) \,u_s^i}{\sum_{i=1}^M k(z-z_s^i)} .
\end{equation*}
We finally introduce the functional
\begin{equation*}
{\cal V}(\{z_s^l\}) :=  \langle \widetilde{\pi}_s, \log \frac{\widetilde{\pi}_s}{\pi^\ast}\rangle_{\cal H}
= \frac{1}{M}\sum_{i=1}^M \log  \frac{\Frac{1}{M} \sum_{j=1}^M
k(z^i_s -z^j_s)}{\pi^\ast(z^i_s)}
\end{equation*}
as an approximation to the Kullback--Leibler divergence in the RKHS ${\cal H}$ and
set
\begin{equation} \label{eq:dFP}
u_s^i := -M \nabla_{z_s^i} {\cal V}(\{z_s^l\}) ,
\end{equation}
which constitutes the desired particle approximation to the Fokker--Planck equation 
\R{eq:FPE1} with drift term \R{eq:sampling_drift}. Time-stepping methods for such gradient flow systems 
have been discussed by \citeasnoun{sr:PR19}.

We also remark that an alternative interacting particle system, approximating the same asymptotic PDF $\pi^\ast$ in
the limit $s \to \infty$, has been proposed recently by \citeasnoun{sr:LW16} 
under the notion of Stein variational descent.  See \citeasnoun{sr:LLN18} for a theoretical analysis of Stein
variational descent, which implies in particular that Stein variational descent can be viewed as a Lagrangian particle 
approximation to the modified evolution equation
\begin{equation*}
\partial_s \breve{\pi}_s = \nabla_z \cdot (\breve{\pi}_s^2 \nabla_z \log \frac{\breve{\pi}_s}{\pi^\ast}(z)) =
\nabla_z \cdot \left(\breve{\pi}_s (\nabla_z \breve{\pi}_s - 
\breve{\pi}_s \nabla_z \log \pi^\ast)\right) 
\end{equation*}
in the marginal PDFs $\breve{\pi}_s$, that is, one uses
\begin{equation*}
F_s(z) := -\breve{\pi}_s \nabla_z \log \frac{\breve{\pi}_s}{\pi^\ast}(z)
\end{equation*}
in \R{eqA:evolution1}. The Kullback--Leibler divergence is still non-increasing since \R{eq:KL_decay} becomes 
\begin{equation*}
\frac{\dd}{\dd s} {\rm KL}\,(\breve{\pi}_s ||\pi^\ast) =   -\int \| F_s \|^2 \,\dd z \le 0 .
\end{equation*}
A numerical discretisation is obtained through the approximation
\begin{equation*}
F_s(z') \approx \int F_s(z)\,k(z-z')\,\dd z ,
\end{equation*}
\ie, one views the kernel $k(z-z')$ as a regularised Dirac delta function. This approximation leads to another vector field
\begin{align*}
\widehat{F}_s(z') &:= -\int \breve{\pi}_s (z) \left\{ \nabla_z \log \breve{\pi}_s(z) - \nabla_z \log \pi^\ast(z)\right\}  k(z-z')\,\dd z\\
&= \int \breve{\pi}_s(z) \left\{ \nabla_z k(z - z') + k(z - z') \,\nabla_z \log 
\pi^\ast (z)   \right\} \dd z .
\end{align*}
On extending the RKHS ${\cal H}$ and its reproducing property \R{eq:rkp} component-wise to vector-valued functions, it follows that
\begin{equation*}
\frac{\dd}{\dd s} {\rm KL}\,(\breve{\pi}_s ||\pi^\ast) =   -\int F_s \cdot \widehat{F}_s \,\dd z 
= -\langle \widehat{F}_s,\widehat{F}_s \rangle_{\cal H} \le 0 
\end{equation*}
along transformations induced by the vector field $\widehat{F}_s$. See
\citeasnoun{sr:LW16} for more details. One finally approximates the integral with respect to the PDF $\breve{\pi}_s$ 
by its empirical estimate using the
particles $z_s^i$, $i=1,\ldots,M$, which yields
\begin{equation*}
\widetilde{F}_s(z') := \frac{1}{M} \sum_{i=1}^M  \left\{ \nabla_z k(z^i_s - z') + k(z_s^i-z')\,\nabla_z \log 
\pi^\ast(z_s^i)  \right\} .
\end{equation*}

We now turn our attention to the dual operator ${\cal L}_t$, defined by \R{eq:diffusion_map1},
which also arises from \R{eq:elliptic-smooth-data2} and \R{eq:elliptic_FPF}, respectively. 
More specifically, let us rewrite \R{eq:elliptic_FPF}
in the form
\begin{equation} \label{eq:elliptic5}
{\cal A}_t \phi_t = -(h-\pi_t[h])
\end{equation}
with the operator ${\cal A}_t$ defined by
\begin{equation*}
{\cal A}_t g := \frac{1}{\pi_t} \nabla_z \cdot (\pi_t \nabla_z g) .
\end{equation*}
Then we find that ${\cal A}_t$ is of the form of ${\cal L}_t$ with $\pi_t$ taking the role of $\pi^\ast$.

We also recall that \R{eq:diffusion_map2} provides an approximation to ${\cal L}_t$ and hence to
${\cal A}_t$. This observation allows one to introduce a sample-based method for approximating the potential $\phi$ defined 
by the elliptic partial differential equation \R{eq:elliptic5}
for a given function $h(z)$. 

Here we instead follow the presentation of \citeasnoun{sr:TM16} and \citeasnoun{sr:TdWMR17} and
assume that we have $M$ samples $z^i$ from a PDF $\pi$. The method is based on
\begin{equation} \label{eq:prashant}
\frac{\phi - {\rm e}^{\epsilon {\cal A}} \phi}{\epsilon} \approx h - \pi[h]
\end{equation}
for $\epsilon>0$ sufficiently small and upon replacing ${\rm e}^{\epsilon {\cal A}}$ with a diffusion map 
approximation \cite{sr:H18} of the form
\begin{equation} \label{eq:approx}
{\rm e}^{\epsilon {\cal A}} \phi (z) \approx T_\epsilon \phi(z) := \sum_{i=1}^M k_\epsilon (z,z^i) \,\phi (z^i) .
\end{equation}
The required kernel functions $k_\epsilon (z,z^i)$ are defined as follows. Let 
\begin{equation*}
n_\epsilon (z) := {\rm n}(z;0,2\epsilon \,I)
\end{equation*}
and
\begin{equation*}
p_\epsilon (z) := \frac{1}{M} \sum_{j=1}^M n_\epsilon (z-z^j) = \frac{1}{M} \sum_{j=1}^M {\rm n}(z;z^j,2\epsilon \,I) .
\end{equation*}
Then
\begin{equation*}
k_\epsilon (z,z^i) := \frac{n_\epsilon (z-z^i)}{c_\epsilon (z)\, p_\epsilon (z^i)^{1/2}} 
\end{equation*}
with normalisation factor
\begin{equation*}
c_\epsilon (z) := \sum_{l=1}^M \frac{n_\epsilon (z-z^l)}{p_\epsilon (z^l)^{1/2}} .
\end{equation*}
In other words, the operator $T_\epsilon$ reproduces constant functions.

The approximations \R{eq:prashant} and \R{eq:approx} lead to the fixed-point problem\footnote{It would also be possible to 
employ the approximation \R{eq:diffusion_map2} in the fixed-point problem \R{eq:fixed_point_FPF}, that is, to replace
$k_\epsilon (z^j,z^i)$ by $(Q_+)_{ji}$ in \R{eq:diffusion_map2} with $\Delta t = \epsilon$ and $\pi^\ast = \pi_t$.}
\begin{equation} \label{eq:fixed_point_FPF}
\phi_j = \sum_{i=1}^M k_\epsilon (z^j,z^i) \, \phi_i  + \epsilon \Delta h_i  , \qquad 
j = 1,\ldots,M ,
\end{equation}
in the scalar coefficients $\phi_j$, $j=1,\ldots,M$, for given
\begin{equation*}
\Delta h_i := h(z^i)- \bar h, \qquad \bar h := \frac{1}{M} \sum_{l=1}^M h(z^l) .
\end{equation*}
Since $T_\epsilon$ reproduces constant functions, \R{eq:fixed_point_FPF} determines $\phi_i$ up to 
a constant contribution, which we fix by requiring
\begin{equation*}
\sum_{i=1}^M \phi_i = 0 .
\end{equation*}
The desired functional approximation $\widetilde{\phi}$ to the potential $\phi$ is now provided by
\begin{equation} \label{eq:hatpsi}
\widetilde{\phi}(z) = \sum_{i=1}^M k_\epsilon (z,z^i) \left\{ \phi_i + \epsilon \Delta h_i\right\} .
\end{equation}
Furthermore, since
\begin{align*}
\nabla_z k_\epsilon(z,z^i) &= \frac{-1}{2\epsilon} k_\epsilon (z,z^i) \left( (z-z^i) - \sum_{l=1}^M k_\epsilon (z,z^l) (z-z^l) \right)\\
&= \frac{1}{2\epsilon} k_\epsilon (z,z^i) \left( z^i - \sum_{l=1}^M k_\epsilon (z,z^l) z^l \right) ,
\end{align*}
we obtain
\begin{equation*}
\nabla_z \widetilde{\phi}(z^j) = \sum_{i=1}^M \nabla_z k_\epsilon (z^j,z^i)\,r_i = \sum_{i=1}^M  z^i \,a_{ij} ,
\end{equation*}
with
\begin{equation*}
r_i = \phi_i + \epsilon\,\Delta h_i
\end{equation*}
and
\begin{equation*}
a_{ij} := \frac{1}{2\epsilon} k_\epsilon(z^j,z^i) \left( r_i - \sum_{l=1}^M k_\epsilon (z^j,z^l)\,r_l \right) .
\end{equation*}
We note that
\begin{equation*}
\sum_{i=1}^M a_{ij} =0
\end{equation*}
and 
\begin{equation*}
\lim_{\epsilon\to \infty}  a_{ij} = \frac{1}{M} \Delta h_i 
\end{equation*}
since 
\begin{equation*}
\lim_{\epsilon \to \infty} k_\epsilon(z^j,z^i) = \frac{1}{M} .
\end{equation*}
In other words, 
\begin{equation*}
\lim_{\epsilon \to \infty} \nabla_z \widetilde{\phi}(z^j) = \frac{1}{M} \sum_{i=1}^M z^i \,(h(z^i)-\bar h) = K^M
\end{equation*}
independent of $z^j$, which is equal to an empirical estimator for the covariance between $z$ and $h(z)$ and which, 
in the context of the FPF, leads to the EnKBF formulations \R{eq:EnKBF2} of section \ref{sec:EnKBF}. 
See \citeasnoun{sr:TdWMR17} for more details and \citeasnoun{sr:TMM19} for a convergence analysis.

%
\subsection*{Appendix B. Regularized St\"ormer--Verlet for HMC}
%

One is often faced with the task of sampling from a high-dimensional PDF of the form
\begin{equation*}
\pi(x) \propto \exp (-V(x)), \qquad V(x) := \frac{1}{2} (x-\bar x)^\T B^{-1} (x-\bar x) + U(x) ,
\end{equation*}
for known $\bar x\in \mathbb{R}^{N_x}$, $B\in \mathbb{R}^{N_x\times N_x}$, and
$U: \mathbb{R}^{N_x} \to \mathbb{R}$. The hybrid Monte Carlo (HMC) method  \cite{sr:Neal,sr:Liu,sr:NSS18} has emerged as
a popular Markov chain Monte Carlo (MCMC) method for tackling this problem. HMC relies on a symplectic
discretization  of the Hamiltonian equations of motion
\begin{align*}
\frac{\dd}{\dd \tau} x &= M^{-1} p\,,\\
\frac{\dd}{\dd \tau} p &= -\nabla_x V(x) = -B^{-1} (x-\bar x) -\nabla_x U(x)
\end{align*}
in an artificial time $\tau$ \cite{sr:LeiRei04}.
The conserved energy (or Hamiltonian) is provided by
\begin{equation} \label{eq:energyHMC}
{\cal H}(x,p) = \frac{1}{2} p^\T M^{-1} p + V(x)\,.
\end{equation}
The symmetric positive definite mass matrix $M \in \mathbb{R}^{N_x\times N_x}$ 
can be chosen arbitrarily, and a natural choice in terms of sampling
efficiency is $M = B^{-1}$ \cite{sr:BPSSS11}. However, when also taking into account computational efficiency,
a St\"ormer--Verlet discretisation 
\begin{align} \label{eq:SV1a}
p_{n+1/2} &= p_n - \frac{\Delta \tau}{2} \nabla_x V(x_n),\\ \label{eq:SV1b}
q_{n+1} &= q_n + \Delta \tau \widetilde{M}^{-1} p_{n+1/2},\\
p_{n+1} &= p_{n+1/2} - \frac{\Delta \tau}{2} \nabla_x V(x_{n+1}), \label{eq:SV1c}
\end{align}
with step-size $\Delta \tau >0$, mass matrix $M=I$ in \R{eq:energyHMC} and modified mass matrix
\begin{equation} \label{eq:SV2}
\widetilde{M} = I + \frac{\Delta \tau^2}{4} B^{-1}
\end{equation}
in \R{eq:SV1b} emerges as an attractive alternative, since it implies 
\begin{equation*}
{\cal H}(x_n,p_n) = {\cal H}(x_{n+1},p_{n+1})
\end{equation*}
for all $\Delta \tau >0$ provided $U(x) \equiv 0$. The St\"ormer--Verlet formulation
\R{eq:SV1a}--\R{eq:SV1c} is based on a regularised formulation of Hamiltonian equations of motion
for highly oscillatory systems as discussed, for example, by \citeasnoun{sr:RH11}.

Energy-conserving time-stepping methods for linear
Hamiltonian systems have become an essential building block for applications of HMC to infinite-dimensional 
inference problems, where $B^{-1}$ corresponds to the discretisation of a positive, self-adjoint and 
trace-class operator ${\cal B}$. See, for example, \citeasnoun{sr:BGLFS17}. 

Note that the St\"ormer--Verlet discretization \R{eq:SV1a}--\R{eq:SV1c} together with \R{eq:SV2} can be
easily extended to inference problems with constraints $g(x) = 0$ \cite{sr:LeiRei04} 
and that \R{eq:SV1a}--\R{eq:SV1c} conserves equilibria\footnote{Note that equilibria of the Hamiltonian equations of motion correspond
to MAP estimators of the underlying Bayesian inference problem.}, that is, points $x_\ast$ with $\nabla V(x_\ast) = 0$, 
regardless of the step-size $\Delta \tau$.

HMC methods, based on \R{eq:SV1a}--\R{eq:SV1c} and \R{eq:SV2}, 
can be used to sample from the smoothing distribution of an SDE as considered in 
Sections \ref{sec:filtering_and_smoothing} and \ref{sec:num_smoothing}.


\subsection*{Appendix C. Ensemble Kalman filter}

We summarise the formulation of an ensemble Kalman filter in the form \R{eq:optimal_transport3}.
We start with the stochastic ensemble Kalman filter \cite{sr:evensen}, which is given by
\begin{equation}\label{eq:sEnKF}
\widehat{Z}_1^j = z_1^j - K(h(z_1^j) + \Theta^j -y_1 ), \qquad \Theta^j \sim {\rm N}(0,R),
\end{equation}
with Kalman gain matrix
\begin{equation*}
K = P^{zh}(P^{hh}+R)^{-1} = \frac{1}{M-1}\sum_{i=1}^M z_1^i\,(h(z_1^i)-\bar h)^\T \left(P^{hh}  +R\right)^{-1}
\end{equation*}
and
\begin{equation*}
P^{hh} := \frac{1}{M-1} \sum_{l=1}^M h(z_1^l)\,(h(z_1^l)-\bar h)^\T, \qquad
\bar h := \frac{1}{M}\sum_{l=1}^M h(z_1^l) .
\end{equation*}
Formulation \R{eq:sEnKF} can be rewritten in the form \R{eq:optimal_transport3}
with
\begin{equation} \label{eq:sEnKF2}
p_{ij}^\ast = \delta_{ij} - \frac{1}{M-1}(h(z_1^i)-\bar h)^\T \left(P^{hh}  +R\right)^{-1}(h(z_1^j)-y_1 + \Theta^j)\,,
\end{equation}
where $\delta_{ij}$ denotes the Kronecker delta, that is, $\delta_{ij} = 0$ if $i\not=j$ and $\delta_{ii}=1$.

More generally, one can think about ensemble Kalman filters and their generalisations
\cite{sr:anderson10} as first defining appropriate updates $\widehat{y}_1^i$
to the predicted $y_1^i = h(z_1^i)$ using the observed $y_1$, which is then extrapolated to the state variable $z$ 
via linear regression, that is
\begin{equation} \label{eq:enkf}
\widehat{z}_1^j = z_1^j + \frac{1}{M-1}\sum_{i=1}^M z_1^i(h(z_1^i)-\bar h)^\T (P^{hh})^{-1}\left(
\widehat{y}_1^j - y_1^j\right) ,
\end{equation}
which can be reformulated in the form \R{eq:optimal_transport3} \cite{sr:reichcotter15}. Note
that the consistency result
\begin{equation*}
H\widehat{z}_1^i = \widehat{y}_1^i
\end{equation*}
follows from \R{eq:enkf} for linear forward maps $h(z) = Hz$.

Within such a linear regression framework, one can easily derive ensemble transformations for the
particles $z_0^i$ at time $t=0$. We simply take the coefficients $p_{ij}^\ast$, as defined for example by an
ensemble Kalman filter \R{eq:sEnKF2}, and applies them to $z_0^i$, that is,
\begin{equation*}
\widehat{z}_0^j = \sum_{i=1}^M z_0^i \,p_{ij}^\ast .
\end{equation*}
These transformed particles can be used to approximate the smoothing distribution $\widehat{\pi}_0$.
See, for example, \citeasnoun{sr:evensen} and \citeasnoun{sr:KTAN17} for more details.

Finally, one can also interpret the ensemble Kalman filter as a continuous update in artificial time $s\ge 0$ of the form
\begin{equation} \label{eq:KB}
\dd z_s^i = -P^{zh}R^{-1}\dd I_s^i
\end{equation}
with the innovations $I_s^i$ given either by
\begin{equation} \label{eq:KB_innovation1}
\dd I_s^i = \frac{1}{2} \left( h(z_s^i) + \bar h_s\right)\dd s - y_1\dd s
\end{equation}
or, alternatively, by
\begin{equation*}
\dd I_s^i = h(z_s^i) \dd s + R^{1/2}\dd V_s^i  - y_1\dd s ,
\end{equation*}
where $V_s^i$ stands for standard Brownian motion \cite{sr:br10b,sr:reich10,sr:br11}. Equation \R{eq:KB}
with innovation \R{eq:KB_innovation1} can be given a gradient flow structure \cite{sr:br10b,sr:reichcotter15} of the
form
\begin{equation} \label{eq:KB_gradientflow}
\frac{1}{\dd s} \dd z_s^i = - P^{zz} \nabla_{z^i} {\cal V}(\{z_s^j\}),
\end{equation}
with potential
\begin{align*}
{\cal V}(\{z^j\}) &:= \frac{1-\alpha}{4} \sum_{j=1}^M (h(z^j)-y_1)^\T R^{-1} (h(z^j)-y_1) \,\,+ \\ & \qquad \qquad  \frac{(1+\alpha)M}{4}
(\bar h - y_1)^\T R^{-1} (\bar h-y_1)
\end{align*}
and $\alpha = 0$ for the standard ensemble Kalman filter, while $\alpha \in (0,1)$ can be seen as a form of variance inflation 
\cite{sr:reichcotter15}. 

A theoretical study of such dynamic formulations in the limit of $s\to \infty$ and $\alpha = -1$
has been initiated by \citeasnoun{sr:SS17}. There is an interesting link to stochastic gradient methods 
\cite{sr:BCN18}, which find application in situations where
the dimension of the data $y_1$ is very high and the computation of the complete gradient $\nabla_z h(z)$ becomes
prohibitive. More specifically, the basic concepts of stochastic gradient methods can be extended to \R{eq:KB_gradientflow}
if $R$ is diagonal, in which case one would pick at random paired components of $h$ and $y_1$ at the $k$th time-step 
of a discretisation of \R{eq:KB_gradientflow}, with the step-size $\Delta s_k$ chosen appropriately. 
Finally, we also point to a link between natural gradient methods and Kalman filtering \cite{sr:O17} which can be explored further in
the context of the continuous-time ensemble Kalman filter formulation \R{eq:KB_gradientflow}.


\subsection*{Appendix D: Numerical treatment of forward--backward SDEs}

We discuss a numerical approximation of the forward--backward SDE problem defined by the forward SDE
\R{eq:Forward-SDE} and the backward SDE \R{eq:BSDE} with initial condition $Z_0^+ \sim \pi_0$ at time $t=0$ and 
final condition $Y_1(Z_1^+) = l(Z_1^+)/\beta$ at time $t=1$. Discretisation of the forward SDE \R{eq:Forward-SDE} by
the Euler--Maruyama method \R{eq:EMM1} leads to $M$ numerical solution paths $z_{0:N}^i$, $i=1,\ldots,M$,
which, according to Definition \ref{def:sample_based2}, lead to $N$ discrete Markov transition matrices $Q^+_n \in
\mathbb{R}^{M\times M}$, $n = 1,\ldots,N$.

The  Euler--Maruyama method is now also applied to the backward SDE \R{eq:BSDE} and yields 
\begin{equation*}
Y_n = Y_{n+1}  - \Delta t^{1/2} \Xi_n^\T V_n .
\end{equation*}
Upon taking conditional expectation we obtain
\begin{equation} \label{eq:BEM1}
Y_n(Z_n^+) = \mathbb{E}\left[ Y_{n+1} | Z_n^+ \right] 
\end{equation}
and
\begin{equation*}
\Delta t^{1/2} \,\mathbb{E} \left[ \Xi_n \Xi_n^\T\right] V_n (Z_n^+) = \mathbb{E}\left[ (Y_{n+1}-Y_n) \Xi_n  | Z_n^+\right] ,
\end{equation*}
respectively. The last equation leads to
\begin{equation} \label{eq:BEM2}
V_n(Z_n^+) = \Delta t^{-1/2} \mathbb{E}\left[ (Y_{n+1}-Y_n) \Xi_n|Z_n^+\right] .
\end{equation} 
We also have $Y_N(Z_N^+) = l(Z_N^+)/\beta$ at final time $t = 1 = N\Delta t$. See page 45 in \citeasnoun{sr:C16} for more details.

We finally need to approximate the conditional expectation values in \R{eq:BEM1} and \R{eq:BEM2}, for which we employ the
discrete Markov transition matrix $Q_{n+1}^+$ and the discrete increments
\begin{equation*}
\zeta_{ij} := \frac{1}{(\gamma \Delta t)^{1/2}} \left( z_{n+1}^i - z_n^j - \Delta t f_{t_n}(z_n^j)\right) \in \mathbb{R}^M .
\end{equation*}
Given $y_{n+1}^j \approx Y(z_{n+1}^j)$ at time level $t_{n+1}$, we then approximate \R{eq:BEM1} by
\begin{equation} \label{eq:BEM1a}
y_n^j := \sum_{i=1}^M y_{n+1}^i  (Q^+_{n+1})_{ij}
\end{equation}
for $n = N-1,\ldots,0$. The backward iteration is initiated by setting $y_N^i = l(z_N^i)/\beta$, $i=1,\ldots,M$. 
Furthermore, a Monte Carlo approximation to \R{eq:BEM2} at $z_n^j$ is provided by
\begin{equation*}
\Delta t^{1/2} \,\sum_{i=1}^M \left\{ \xi_{ij} (\xi_{ij})^\T (Q^+_{n+1})_{ij}\right\} v_n^j = \sum_{i=1}^M  (y_{n+1}^i - y_n^j) \xi_{ij}  (Q^+_{n+1})_{ij} 
\end{equation*}
and, upon assuming invertibility, we obtain the explicit expression
\begin{equation} \label{eq:BEM2a}
 v_n^j :=  \Delta t^{-1/2}\,\left( \sum_{i=1}^M \xi_{ij} (\xi_{ij})^\T (Q^+_{n+1})_{ij} \right)^{-1} \sum_{i=1}^M  (y_{n+1}^i - y_n^j) \xi_{ij}  (Q^+_{n+1})_{ij} 
\end{equation}
for $n=N-1,\ldots,0$.

Recall from Remark \ref{rem:fbsde} that $y_n^j \in \mathbb{R}$ provides an approximation to $\psi_{t_n}(z_n^j)$ and $v_n^j \in \mathbb{R}^{N_z}$
an approximation to $\gamma^{1/2} \nabla_z \psi_{t_n}(z_n^j)$, respectively, where $\psi_t$ denotes the solution of the backward Kolmogorov equation 
\R{eq:BKE1} with final condition $\psi_1(z) = l(z)/\beta$. Hence, the forward solution paths $z_{0:N}^i$, $i=1,\ldots,M$, together with 
the backward approximations \R{eq:BEM1a} and \R{eq:BEM2a} provide a mesh-free approximation to the backward Kolmogorov equation
\R{eq:BKE1}. Furthermore, the associated control law \R{eq:optimal_control_SDE} can be approximated by
\begin{equation*}
u_{t_n}(z_n^i) \approx \frac{\gamma^{1/2}}{y_n^i} v_n^i .
\end{equation*}

The division by $y_n^i$ can be avoided by means of the following alternative formulation. We introduce the potential
\begin{equation} \label{eq:def_phi}
\phi_t := \log \psi_t\,,
\end{equation}
which satisfies the modified backward Kolmogorov equation
\begin{equation*}
0= \partial_t \phi_t + {\cal L}_t \phi_t + \frac{\gamma}{2} \|\nabla_z \phi_t\|^2
\end{equation*}
with final condition $\phi_1(z) = \log l(z)$, where we have
ignored the constant $\log \beta$. Hence It\^o's formula applied to $\phi_t(Z_t^+)$ leads to
\begin{equation} \label{eq:Lagrange2}
\dd \phi_t =  -\frac{\gamma}{2}  \|\nabla_z \phi_t \|^2 \dd t +
\gamma^{1/2} \nabla_z \phi_t \cdot \dd W_t^+
\end{equation}
along solutions $Z_t^+$ of the forward SDE \R{eq:Forward-SDE}. It follows from 
\begin{equation*} 
\frac{\dd \widehat{\mathbb{P}}}{\dd \mathbb{Q}^u}_{|z_{[0,1]}} =  \frac{l(z_1)}{\beta} \frac{\pi_0(z_0)}{q_0(z_0)} 
\exp \left(\frac{1}{2\gamma} \int_0^1 \left( \|u_t\|^2\,\dd t -
2 \gamma^{1/2} u_t \cdot \dd W_t^+ \right)\right) ,
\end{equation*}
with $u_t = \gamma \nabla_z \phi_t$, $q_0 = \widehat{\pi}_0$, and
\begin{equation*}
\log \frac{l(z_1)}{\beta}-\log \frac{\widehat{\pi}_0(z_0)}{\pi_0(z_0)} = \int_0^1 \dd \phi_t
\end{equation*}
that $\widehat{\mathbb{P}} = \mathbb{Q}^u$, as desired. 

The backward SDE associated with \R{eq:Lagrange2} becomes
 \begin{equation} \label{eq:BSDE3}
\dd Y_t = -\frac{1}{2} \|V_t\|^2 \dd t + V_t \cdot \dd  W_t^+
\end{equation}
and its Euler--Maruyama discretisation is
\begin{equation*}
Y_n = Y_{n+1} + \frac{\Delta t}{2} \|V_n\|^2 - \Delta t^{1/2} \Xi_n^\T V_n .
\end{equation*}
Numerical values $(y_n^i,v_n^i)$ can be obtained as before with \R{eq:BEM1a} replaced by
\begin{equation*} \
y_n^j := \sum_{i=1}^M \left( y_{n+1}^i + \frac{\Delta t}{2} \|v_n^j\|^2\right)  (Q^+_{n+1})_{ij}
\end{equation*}
and the control law \R{eq:optimal_control_SDE} is now approximated by 
\begin{equation*}
u_{t_n}(z_n^i) \approx \gamma^{1/2} v_n^i\,.
\end{equation*}

We re-emphasise that the backward SDE \R{eq:BSDE3} arises naturally from an optimal control perspective onto the smoothing problem. See 
\citeasnoun{sr:C16} for more details on the connection between optimal control and backward SDEs. In particular, this connection leads to
the following alternative approximation
\begin{equation*}
u_{t_n}(z_n^j) \approx   \sum_{i=1}^M z_{n+1}^i \left\{   (\widehat{Q}_{n+1}^+)_{ij}- (Q_{n+1}^+)_{ij} \right\}
\end{equation*}
of the control law \R{eq:optimal_control_SDE}. Here $\widehat{Q}_{n+1}^+$ denotes the twisted Markov transition matrix defined by
\begin{equation*}
\widehat{Q}_{n+1}^+ = D(y_{n+1})\,Q_{n+1}^+ \,D(y_n)^{-1}, \qquad y_n = (y_n^1,\ldots,y_n^M)^\T.
\end{equation*}

\medskip

\begin{remark}
The backward SDE \R{eq:BSDE3} can also be utilised to reformulate the Schr\"odinger system \R{eq:SS1a}--\R{eq:SS1d}. 
More specifically, one seeks an initial $\pi_0^\psi$ which evolves under the forward SDE  \R{eq:Forward-SDE} with $Z_0^+ \sim \pi_0^\psi$ 
such that the solution $Y_t$ of the associated backward SDE \R{eq:BSDE3} with final condition
\begin{equation*}
Y_1(z) = \log \widehat{\pi}_1(z) - \log \pi_1^\psi(z)
\end{equation*}
implies $\pi_0 \propto \pi_0^\psi \exp(Y_0)$. The desired control law in \R{eq:Forward-SDE2} is provided by
\begin{equation*}
u_t(z) = \gamma \nabla_z Y_t(z) = \gamma^{1/2} V_t(z) .
\end{equation*}
\end{remark}

\end{document}